\newcommand{\algmargin}{\the\ALG@thistlm}
\newlength{\whilewidth}
\algnewcommand{\parState}[1]{\State%
	\parbox[t]{\dimexpr\linewidth-\algmargin}{\strut #1\strut}}
\newtheorem{thm}{Theorem}[section]
\newtheorem{lemmaa}[thm]{\textbf{Lemma}}
\newtheorem{rulee}[thm]{\textbf{Rule}}
\newtheorem{rem}{Remark}
\newtheorem{algorithm_}{Algorithm}[section]
\newcommand{\mC}[1]{\multicolumn{1}{|c|}{#1}}
\newcommand{\mR}[1]{\multirow{2}{*}{#1}}
\def\p{\partial}
\def\DD{\displaystyle}
\def\R{\mathbb{R}}
\def\ii{\mathbf{i}}
\def\l{\,l}
\def\bx{{\bf x}}
\def\f{\mbox{field}}
\def\a{\alpha}
\def\B{B}
\def\d{d}                       
\def\P{\mathcal{P}}
\def\L{{\mathcal{L}}}
\def\Nbx{N_x}
\def\Nby{N_y}
\def\Nbz{N_z}
\def\MaxN2{\max\{\Nbx, \Nby\}}
\def\MaxN3{\max\{\Nbx, \Nby, \Nbz\}}
\def\f32{\frac{3}{2}}
\def\s1{{s-1}}
\def\s2{{s-2}}
\def\d{\boldsymbol{d}}
\def\dirThreeD{\Box, \vartriangle, \ocircle}
\def\rangeTwoDRow{i = 1,\ldots,\Nbx \\ j = 1,\ldots,\Nby}
\def\rangeThreeDRow{i = 1,\ldots,\Nbx \\ j = 1,\ldots,\Nby \\ k = 1,\ldots,\Nbz}
\def\rangeThreeDCol{i = 1,\ldots,\Nbx,\; j = 1,\ldots,\Nby,\; k = 1,\ldots,\Nbz}
\DeclareMathOperator\supp{supp}
\def\Tave{{T}_{\text{ave}}}
\begin{document}


\title{A  Diagonal Sweeping Domain Decomposition Method with Source Transfer for the Helmholtz Equation}

\author[Wei Leng et.~al.]{Wei Leng \affil{1},
	Lili Ju\affil{2}\comma\corrauth}
\address{\affilnum{1}\ State Key
	Laboratory of Scientific and Engineering Computing, Chinese Academy
	of Sciences, Beijing 100190, China.\\
	\affilnum{2}\ Department of Mathematics, University of South Carolina, Columbia, SC 29208, USA.
}
\emails{{\tt wleng@lsec.cc.ac.cn} (W.~Leng), 
	{\tt ju@math.sc.edu} (L.~Ju),
}

\begin{abstract}
  In this paper, we propose and test a novel diagonal sweeping domain decomposition method (DDM) with source transfer for solving the high-frequency Helmholtz equation in $\R^n$.  In the method the computational domain is partitioned into overlapping checkerboard subdomains for source transfer with the perfectly matched layer (PML) technique, then a set of diagonal sweeps over the subdomains are specially designed to solve the system efficiently.
The method improves  the additive overlapping DDM \cite{Leng2019} and the L-sweeps method \cite{Zepeda2019} by
  employing a more efficient subdomain solving order.
  We show that the method achieves the exact solution of the global PML problem with $2^n$ sweeps in the constant medium case.  Although the sweeping usually implies sequential subdomain solves, the number of sequential steps required for each sweep in the method is only proportional to the $n$-th root of the number of subdomains when the domain decomposition is quasi-uniform with respect to all directions, thus  it is very suitable for parallel computing of
 the Helmholtz problem with multiple right-hand sides through the pipeline processing.  Extensive numerical experiments in two and three dimensions are presented to demonstrate the effectiveness and efficiency of the proposed method.

\end{abstract}

\ams{65N55, 65F08, 65Y05}

\keywords{
 Helmholtz equation,  domain decomposition method, diagonal sweeping, perfectly matched layer, source transfer, parallel computing
}

\maketitle

\newif\ifdraftFig
\newif\ifSecTwo
\newif\ifSecTwoTwoD
\newif\ifSecTwoThreeD
\newif\ifSecThree
\newif\ifSecFour

\draftFigtrue

\SecTwotrue
\SecTwoTwoDtrue
\SecTwoThreeDtrue
\SecThreetrue
\SecFourtrue

\section{Introduction}

In this paper,  we consider  the well-known Helmholtz equation defined in $\R^n$ ($n=2,3$) as follows:
\begin{align} \label{eq:helm}
  \Delta u + \kappa^2 u &= f ,\qquad  \mbox{in} \;\;\; \R^n,
\end{align}
imposed with the Sommerfeld radiation condition
\begin{align} \label{cond_s}
  r^{\frac{n-1}{2}}\Big(\frac{\p u}{\p r} - \mathbf{i} \kappa u\Big) &\rightarrow 0, \qquad \mbox{as} \;\;\; r = |\bx| \rightarrow \infty, 
\end{align}
where $u(\bx)$ is the unknown function, $f(\bx)$ is the source and
$\kappa(\bx) := {\omega}/{c(\bx)} $ denotes the wave number with $\omega$ being the angular frequency and $c(\bx)$ the wave speed.
Solving the Helmholtz equation \eqref{eq:helm} with large wave number accurately and efficiently is crucial to many physics and engineering problems. For example, in exploration seismology, the Helmholtz equation  with  pre-given wave speed  needs to be solved for hundreds of different sources  in reverse time migration, and even more in full wave inversion.  However, since the discrete Helmholtz system with large wave number is highly indefinite, constructing efficient solvers is quite important and challenging \cite{Gander2012}, and for this purpose many methods have been proposed and studied, including the direct method \cite{Frontal1}, the multigrid method \cite{Erlangga2006} and the domain decomposition method \cite{Despres1990}.


The direct method, such as the multifrontal method \cite{Frontal1} with nested dissection \cite{George1973}, was designed to solve linear systems arising from discretization of general PDE problems, and has  been employed to solve the discrete Helmholtz problem.
The multifrontal method was  further coupled with the hierarchically semi-separable matrices (HSS) in \cite{HSS}, and the low rank proprieties were exploited to  reduce the computational complexity for many problems including Helmholtz equation in \cite{Xia2010,Wang2016}.  However, the low-rank representation for the Helmholtz kernel in high frequency is missing \cite{Engquist2016}, which causes the HSS and multifrontal coupled method to be less effective for high frequency problems.
On the other hand, some variants of the multifrontal method were  also introduced in \cite{Gillman2015, Leng2015} for the Helmholtz problem. Those methods mostly focus on constructing the Dirichlet to Neumann (DtN) map for the subdomains in the nested dissection, which is more intuitive than manipulating the algebraic matrices in the multifrontal method, while the order of computational complexity remains the same.

The multigrid method with the shifted Laplace was first introduced in \cite{Erlangga2006}, and then further developed in \cite{Erlangga2005, Erlangga2006b,Erlangga2008,Umetani2009,Sheikh2013,Aruliah2002}.  A complex shift is added to the Helmholtz operator, resulting in an easier problem that could be solved with multigrid solver, which  then can be used as an effective preconditioner for the original Helmholtz problem. The shifted Laplace method has been shown to be very effective, and followed by many researches in literature, to name a few, \cite{Airaksinen2007,Calandra2013,Cools2014,Cools2015,Tsuji2015,Hu2016,Stolk2016,Reps2017,Gander2015}.  The amount of the shift is a compromise, a larger shift leads to an easier problem to solve in preconditioning but more iteration steps in the Krylov subspace solve, while a smaller shift results in harder preconditioning but fewer iteration steps.  For the high frequency problem, if the shifted problem in preconditioning is required to be solved efficiently, then the number of iterations in the Krylov subspace solve grows as fast as the square of the frequency\cite{Gander2017}, thus the high frequency problem is still a big challenge for the shifted Laplace method.

The domain decomposition method (DDM) for solving the Helmholtz problem was first studied  in \cite{Despres1990}.  A good approximation of the Dirichlet to Neumann (DtN) map is the key to maintain the effectiveness of DDM for the Helmholtz equation, and later various transmission conditions are proposed to approximate the DtN map, leading to different DDMs as in \cite{Collino2000, SchwarzBdry1, SchwarzBdry2, SchwarzBdry3, SchwarzBdry4, SchwarzBdry5, SchwarzBdry6, Douglas1998,Boubendir2012, Schadle2007, Toselli1999}.  However, the additive nature of these DDMs cause the number of iterations grows as fast as the $n$-th root of the number of subdomains in the checkerboard partition case.

The first sweeping type DDM for the Helmholtz problem was introduced by Engquist and Ying in \cite{Engquist2011a, Engquist2011b}, and followed by many variants, such as the single layer potential \cite{Stolk2013}, the double sweeping preconditioner \cite{Vion2014}, the polarized trace method \cite{Zepeda2014}, and the source transfer DDM (STDDM) \cite{Chen2013a,Chen2013b}. These methods employ  the perfectly matched layer (PML)  boundary condition on each subdomain and mainly differ at the transmission conditions between subdomains, and they all can be uniformly formulated in the context of optimized Schwarz method \cite{GanderReview}.  These DDMs usually decomposes the domain into layers, and sweep forwards and backwards in the layers to obtain good approximations of the solution. 
The sweeping type DDMs could be interpreted as  $LU$ or $LDL^T$ factorizations  and  forward/backward substitutions, and they generally have two phases, the factorization phase and the sweeping phase.
In the factorization phase, the local discrete systems of subdomains are factorized, which could be done in parallel. 
In the sweeping phase, the local 
solves of subdomain problems are applied one by one to form the global solution, which is a sequential process.    
The factorization phase is the bottleneck for the sweeping type DDMs for the Helmholtz problem in $\R^3$, since the factorization of each 2D layered subdomain  requires a scalable and efficient direct solver, which is often hard to accomplish as mentioned previously. On the other hand, although the sweeping phase is sequential, it could be arranged in a pipeline for parallel processing in the case of multiple right-hand sides (RHSs), which is quite common in many practical applications such as seismic imaging and electromagnetic scattering.    

Some recursive sweeping DDMs were proposed and studied in \cite{Liu2015b} and \cite{Wu2015}, which are based on the sweeping preconditioner and the source transfer DDM, respectively. In these methods, each of the layered subdomains is further decomposed into smaller layers in the perpendicular direction, and again solved with the sweeping DDM.  In such a way, the bottleneck caused by the factorization of subdomains no longer exists. However, the difficulty is  then shifted to the sweeping phase.  The number of steps used for each of the sequential subdomain sweeps is now proportional to the number of subdomains, that causes these methods not suitable for parallel computing in practice, for example,  when solving the multiple RHSs problem with the pipeline processing, the construction of an efficient pipeline will require a large number of RHSs in order to achieve good parallel efficiency.

The success of using source or trace transfer in sweeping DDMs with layered partitions inspires the development of the additive overlapping DDM for the Helmholtz equation in \cite{Leng2019}, which is  based on structured subdomains along all spatial directions  (i.e., checkerboard domain decomposition) in the context of the source transfer method. It is proved that this method could produce the exact solution in finite steps for the constant medium problem. 
The corner  transfer is considered for the first time in this method.  It is observed that for the case that the source lies only in one subdomain, the exact global solution can be constructed  with the subdomain solution marching along four diagonal directions in $\R^2$.
{Recently,  a sweeping-type DDM method called ``L-sweeps" was proposed in \cite{Zepeda2019}, which is also based on the corner  transfer. The  L-sweeps method wisely utilizes the property of diagonal subdomain solution marching,
  and employs a novel subdomain solving order of sweeps of all directions, which is the main difference between the additive overlapping DDM \cite{Leng2019} and the ``L-sweeps" method.}
The L-sweeps method produces an outstanding algorithm with $O(N \log N)$ complexity where $N$ denotes the number of unknowns of the discrete system.
Furthermore, the number of steps required by each sequential subdomain sweep in the L-sweeps method is only proportional to the $n$-th root of the number of subdomains, 
thus  this method is much more suitable for parallel computing compared to the recursive sweeping methods. When solving the multiple RHSs problem   using the L-sweeps method with pipeline, the requirement on the number of RHSs to achieve good parallel efficiency is feasible and could be easily satisfied in practical applications.

In this paper, we propose a novel diagonal sweeping DDM  for solving the Helmholtz equation \eqref{eq:helm} based on checkerboard domain decomposition. {Our method adopts a new subdomain solving order, which partly originates from the L-sweeps method \cite{Zepeda2019} but is more efficient.} Compared to the L-sweeps method, the proposed method has two major advantages in terms of efficiency and effectiveness:
\begin{itemize}
  \item The needed sweeps in each preconditioning solve are reduced from  L-sweeps of  $3^n-1$ directions (8 in $\R^2$ and 26 in $\R^3$ respectively) to diagonal sweeps of $2^n$ directions (4 in $\R^2$ and 8 in $\R^3$ respectively).
  \item The reflections are treated more appropriately for the layered media problems, increasing from one reflection to averagely two reflections per preconditioning solve.
\end{itemize}
The rest of the paper is organized as follows.  We first review the PML method associated with the Helmholtz equation and the corresponding additive overlapping DDM  \cite{Leng2019} with source transfer  in $\R^2$ and $\R^3$ in Section 2.  By wisely re-arranging the solving order of the additive DDM,   the diagonal sweeping DDM with source transfer in $\R^2$  is proposed and analyzed in Section3 and its  extension to $\R^3$ in Section 4. In addition,  we show that  the  DDM solutions are the exact solutions of the corresponding PML problems in $\R^2$ and $\R^3$ in the constant medium case. In Section 5, various numerical experiments in two and three dimensions are performed to verify convergence of the  diagonal sweeping DDM  for constant medium problems, and to test efficiency and effectiveness of the method as the preconditioner for layered media and even more complicated problems.
Some concluding remarks are finally drawn in Section 6.

\ifSecTwo
\section{Perfectly matched layer and  additive overlapping DDM with source transfer}

In this section, we  first recall the perfectly matched layer  method and the source transfer technique, and then review the additive overlapping DDM with source transfer proposed in \cite{Leng2019}, which is the basis of the diagonal sweeping DDM proposed in this paper. 

\subsection{Perfectly matched layer  and source transfer}

The Helmholtz equation \eqref{eq:helm} defined in the whole space  with the Sommerfeld radiation condition  \eqref{cond_s}  can be solved in a bounded domain such as a  rectangular box using the so-called uniaxial
PML method \cite{Berenger, Chew1994, Kim2010, Bramble2013, Chen2013a}, provided that the source lies inside the box. 
Suppose that a  rectangular box in $\R^2$ is defined as
$\B = \{(x_1,x_2)\;|\;a_j \le x_j \le b_j, j = 1,2\}$, 
with the center of the box  denoted by $(c_1, c_2)$ where $c_j = \frac{a_j+b_j}{2}$, for $j = 1,2$.
Let $\alpha_1(x_1) = 1 + \ii \sigma_1(x_1) $  and $\alpha_2(x_2) = 1 + \ii \sigma_2(x_2) $,
with $\{\sigma_j\}_{j=1}^2$ being  piecewise smooth functions such that
\begin{equation} \label{eq:sigma}
\sigma_j(\bx) = \left\{  
\begin{array}{ll}
\widehat{\sigma}(x_j - b_j), & \text{if} \,\,\, b_j \le x_j,\\
0,                         & \text{if} \,\,\, a_j < x_j < b_j,\\
\widehat{\sigma}(a_j - x_j), & \text{if} \,\,\, x_j \le a_j,\\
\end{array}
\right. 
\end{equation}
where $\widehat{\sigma}(t)$ is certain smooth medium profile function, then
 the complex coordinate stretching $\tilde{\bx}(\bx)$ for $\bx = (x_1, x_2)$ is defined as
\begin{equation}
\tilde{x}_j(x_j) = c_j+\int_{c_j}^{x_j} \alpha_j(t)\, dt = x_j
+ \ii \int_{c_j}^{x_j} \sigma_j(t) \,dt,\qquad j = 1,2.
\end{equation}

The PML equation is then defined under the  complex coordinate stretching as follows:
\begin{equation} \label{eq:PML}
J_{\B}^{-1} \nabla \cdot (A_{\B} \nabla\tilde{u}) + \kappa^2 \tilde{u} = f, 
\end{equation}
where 
$$\DD A_{\B}(\bx) = \mbox{diag}\left(\frac{\a_2(x_2)}{\a_1(x_1)}, \frac{\a_1(x_1)}{\a_2(x_2)}\right),\quad  J_{\B}(\bx) = \a_1(x_1) \a_2(x_2),$$
and $\tilde{u}$ is called the PML solution.   
The well-posedness of the weak problem associated with equation \eqref{eq:PML} has been established  in  \cite[Lemma 3.3]{Chen2013a}, and the PML solution $\tilde{u}$ equals $u$ within the box  and decays exponentially outside of the box.
For convenience, we denote by $\P_{\B}$ the PML problem \eqref{eq:PML} associated with the rectangular box $\B$, and denote by $\L_{\B}:=J_{\B}^{-1} \nabla \cdot (A_{\B} \nabla \, \boldsymbol{\cdot}) + \kappa^2 $ the linear operator associated with $\P_B$.

Similarly in $\R^3$, the PML equation for the cuboidal box 
$\B = \{(x_1,x_2,x_3)\;|\; a_j \le x_j \le b_j, j = 1,2,3\}$ could be defined as \eqref{eq:PML}
for $\bx=(x_1,x_2,x_3)$ with 
$$\DD A_{\B}(\bx) = \mbox{diag}\left(\frac{\a_2(x_2)\a_3(x_3)}{\a_1(x_1)}\right. , \frac{\a_1(x_1)\a_3(x_3)}{\a_2(x_2)}, \left. \frac{\a_1(x_1)\a_2(x_2)}{\a_3(x_3)}\right),\quad J_{\B}(\bx) = \a_1(x_1) \a_2(x_2)\a_3(x_3),$$ where $\alpha_3(x_3) = 1 + \ii \sigma_3(x_3) $ 
and $\sigma_3(x_3)$ is defined  in the same way as \eqref{eq:sigma}.


From now on, the constant medium (i.e., the constant wave number $\kappa(\bx)\equiv \kappa$) is assumed for development and analysis of the DDM methods.
The source transfer technique is presented in the following. The case of $\R^2$  is used for illustration and the results can be similarly extended to
the case of $\R^3$.  Suppose that a piecewise smooth curve $\gamma$ divides $\R^2$ into two parts $\Omega_1$ and $\Omega_2$, and at the meantime, the curve also divides the rectangular box $\B$ into two parts.
Let $\widetilde{\Omega}_1$ be the extended domain of $\Omega_1$ by a distance of $d$, for instance, $\widetilde{\Omega}_1 = \{\bx : \rho(\bx, \Omega_1) \le d\}$ where $d>0$ is a positive constant and denote $\widetilde{\gamma} = \p \widetilde{\Omega}_1$, as shown in Figure \ref{fig:twopart}-(a). There always exists a smooth cutoff function $\beta \in C^2(\R^n)$ with $0 \leq \beta \leq 1$ such that
$$\beta|_{\Omega_1} \equiv 1, \quad \beta|_{\R^n \setminus \widetilde{\Omega}_1 } \equiv 0, $$ 
and 
$$ |\nabla \beta(\bx) | < C, \qquad \forall\,\bx\in \widetilde{\Omega}_1 \setminus\Omega_1,$$
where $C$ is a generic positive constant. Then we have the following result on source transfer  \cite{Leng2019}:

\begin{lemmaa} \label{lemma:src_trans}
	Suppose that the support of $f$ is in $\Omega_1 \cap \B $.
	Let $u$ be the solution to the PML problem $\P_B$ with  the source $f$ (i.e, $\L_{\B} u = f$ in $\R^2$).
	Given $u_1$ as the restriction of $u$ on $\widetilde{\Omega}_1$, such as $u_1 := u {\chi}_{\widetilde{\Omega}_1} $, 
	and let $u_2$ be solution to 
	the PML problem $\P_B$ with  the source $-\L_{\B} (u_1 \beta)\chi_{\Omega_2}$  (i.e.,
	$	\L_{\B} u_2 = -\L_{\B} (u_1 \beta)\chi_{\Omega_2}$ in $\R^2$). Then it holds  that $u_1\beta + u_2 = u$ in $\R^2$ and  $u_2 = 0$ in $\Omega_1$.
\end{lemmaa}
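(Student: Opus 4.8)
\medskip
\noindent\textbf{Proof strategy.}
The plan is to set $w := u - u_1\beta$ and to show that $w$ solves \emph{exactly} the PML problem $\P_\B$ with source $-\L_\B(u_1\beta)\chi_{\Omega_2}$; since $u_2$ is by definition the unique solution of that problem, this will give $w = u_2$, hence $u_1\beta + u_2 = u$. The second claim, $u_2 = 0$ in $\Omega_1$, comes essentially for free: because $\Omega_1 \subseteq \widetilde{\Omega}_1$ we have $u_1 = u\chi_{\widetilde{\Omega}_1} = u$ on $\Omega_1$, and $\beta \equiv 1$ on $\Omega_1$ by construction, so $u_1\beta = u$ and $w \equiv 0$ there; once $w = u_2$ is established this is precisely the assertion.

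The heart of the argument is the identity $\L_\B w = -\L_\B(u_1\beta)\chi_{\Omega_2}$ on $\R^n$, which I would check region by region. By linearity of $\L_\B$, $\L_\B w = f - \L_\B(u_1\beta)$. On the open set $\Omega_1$, each point has a small ball contained in $\Omega_1 \subseteq \widetilde{\Omega}_1$ on which $u_1\beta = u$; since $\L_\B$ is a local (second-order differential) operator, this forces $\L_\B(u_1\beta) = \L_\B u = f$ on $\Omega_1$, so $\L_\B w = 0$ there, matching $-\L_\B(u_1\beta)\chi_{\Omega_2}$ since $\chi_{\Omega_2}$ vanishes on $\Omega_1$ away from the interface $\gamma$ (a null set). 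On $\Omega_2$, $f = 0$ because $\supp f \subseteq \Omega_1 \cap \B$, so $\L_\B w = -\L_\B(u_1\beta) = -\L_\B(u_1\beta)\chi_{\Omega_2}$. Combining gives the identity a.e. on $\R^n$; note also that the transferred source is supported in the overlap $\widetilde{\Omega}_1 \cap \Omega_2$, since $u_1\beta$ vanishes outside $\widetilde{\Omega}_1$.

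The step I expect to require the most care is that $-\L_\B(u_1\beta)\chi_{\Omega_2}$ actually be a bona fide $L^2$ source, i.e., that $\L_\B(u_1\beta)$ carries no singular layer along $\widetilde{\gamma} = \partial\widetilde{\Omega}_1$, where the factor $\chi_{\widetilde{\Omega}_1}$ inside $u_1$ jumps. This is exactly what the cutoff is for: since $\beta \in C^2(\R^n)$ vanishes identically on the open set $\R^n \setminus \widetilde{\Omega}_1$, its value, gradient and Hessian all vanish on $\widetilde{\gamma}$, so $u_1\beta = (u\beta)\chi_{\widetilde{\Omega}_1}$ joins $C^2$-smoothly to $0$ across $\widetilde{\gamma}$ (using that $u$ is regular near $\widetilde{\gamma}$, which lies away from $\supp f$). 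Hence $u_1\beta$ is regular enough that $\L_\B(u_1\beta) \in L^2$, that the localization argument above is legitimate, and — together with the exponential decay of $u$ outside $\B$ — that $w = u - u_1\beta$ lies in the solution space of $\P_\B$. The well-posedness of $\P_\B$ \cite[Lemma 3.3]{Chen2013a} then forces $w = u_2$, which completes the proof.
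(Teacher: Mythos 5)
Your proposal is correct and follows essentially the same route as the paper, which justifies the lemma in one line by observing that the residual satisfies $f - \L_{\B}(u_1\beta) = -\L_{\B}(u_1\beta)\chi_{\Omega_2}$ and that $u_2$ is the correction solving the PML problem with that residual as source; your region-by-region verification of this identity, the use of well-posedness to identify $u - u_1\beta$ with $u_2$, and the observation that $u_1\beta = u$ on $\Omega_1$ are exactly the details the paper leaves implicit. Your extra care about the absence of a singular layer on $\widetilde{\gamma}$ (because $\beta$ vanishes smoothly there) is a sensible addition that the paper does not spell out but clearly relies on.
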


The above Lemma  is straightforward based on the fact $u_1 \beta$ is the partial modification of $u$ and $u_2$ is the correction to the modification according to the residual, $ f - \L_{\B} (u_1\beta) = -\L_{\B} (u_1\beta)\chi_{\Omega_2}$. Lemma \ref{lemma:src_trans} is  applied in the additive DDM  \cite{Leng2019} for two types of boundaries, the straight line and the fold line, as  shown in Figure \ref{fig:twopart}-(b) and (c), which correspond to the horizontal/vertical transfer and the corner transfer, respectively.
The overlapping region $\widetilde{\Omega}_1\setminus\Omega_1 $ in Lemma \ref{lemma:src_trans} is   handled with 
 a  shifted PML media profile function $\widehat{\widehat{\sigma}}(t)$   defined by
\begin{equation} \label{eq:PMLprofile}
\widehat{\widehat{\sigma}}(t) = \left\{  
\begin{array}{ll}
0,                          & \text{if} \,\,\, t \leq d,\\
\widehat{\sigma}(t-d), & \text{if} \,\,\, t > d.\\
\end{array}
\right. 
\end{equation}
For simplicity, the above shifted medium profile is denoted by ${\widehat{\sigma}}(t)$  in the rest of the paper,  and  when we refer to the PML problem $\P_{B}$, an extended region of width $d$ is always attached to the rectangular box $B$, for possible overlapping with  its neighbor regions.

\begin{figure}[!ht]	
	\centering
	\begin{minipage}[t]{0.38\linewidth}
		\vspace{0pt}
		\centering
		\includegraphics[width=0.9\textwidth]{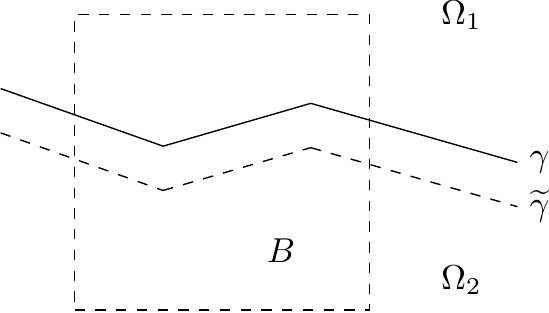}\\
	\end{minipage}
	\hspace{0.2cm}
	\begin{minipage}[t]{0.27\linewidth}
		\vspace{0pt}
		\centering
		\includegraphics[width=0.9\textwidth]{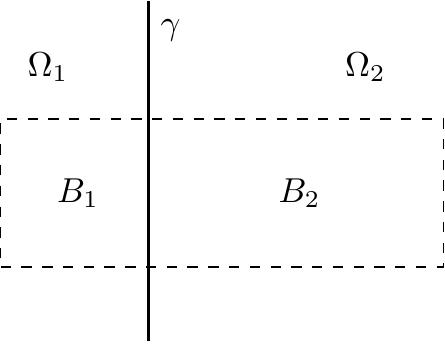}
	\end{minipage}
	\hspace{0.4cm}
	\begin{minipage}[t]{0.27\linewidth}
		\vspace{0pt}
		\centering
		\includegraphics[width=0.85\textwidth]{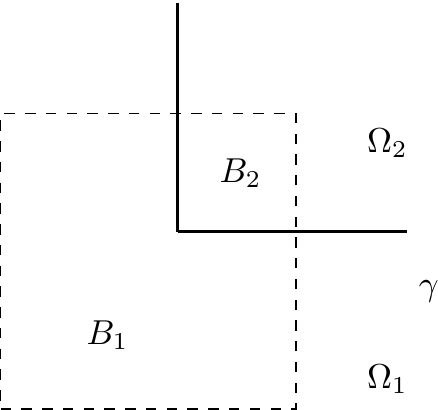}
	\end{minipage}\\
	\vspace{0.2cm}
	\begin{minipage}[t]{0.38\linewidth}
		\centering
		(a)
	\end{minipage}
	\hspace{0.2cm}
	\begin{minipage}[t]{0.27\linewidth}
		\centering
		(b)
	\end{minipage}
	\hspace{0.4cm}
	\begin{minipage}[t]{0.27\linewidth}
		\centering
		(c)
	\end{minipage}	
	\caption{Divide $\R^2$ and the box $B$ with  a piecewise smooth curve $\gamma$.\label{fig:twopart} }
\end{figure}


The domain decomposition that we use  is stated below.
The rectangular domain $\Omega = [-l_1,l_1]\times[-l_2,l_2]$ in $\R^2$ is uniformly partitioned into $\Nbx \times \Nby$ nonoverlapping rectangular subdomains.  Let $\Delta\xi = 2 l_1 / \Nbx$, $\xi_i = -l_1 + (i-1) \Delta \xi$ for $i = 1, 2, \ldots, \Nbx+1$, and $\Delta\eta = 2 \l_2 / \Nby$, $\eta_j = -l_2 + (j-1) \Delta\eta$ for $j = 1, 2, \ldots, \Nby+1$.  Then we have $\Nbx\times \Nby$ nonoverlapping rectangular subdomains as
$$\Omega_{i,j}: = [\xi_{i}, \xi_{i+1}] \times [\eta_{j}, \eta_{j+1}],\qquad i = 1, 2,\ldots, \Nbx,\; j = 1, 2,\ldots, \Nby.$$
For convenience, we also define the box $\Omega_{i0,i1;j0,j1}$ ($1\leq i0\leq i1\leq \Nbx+1$, $1\leq j0\leq j1\leq \Nby+1$), which consists of a set of  rectangular subdomains:
$$\Omega_{i0,i1;j0,j1} := \bigcup\limits_{i0 \leq i \leq i1 \atop j0 \leq j \leq j1} \Omega_{i, j}.$$ It is clear that the PML equation associated with each rectangular subdomain $\Omega_{i,j}$ needs to be solved in the DDM method.  The source $f$, which is assumed to be compactly supported in $\Omega$, is decomposed to
$$f_{i,j} = f \cdot \chi_{\Omega_{i,j}} , \qquad i = 1, 2\ldots, \Nbx, \;j = 1, 2, \ldots, \Nby.$$
Notice that the PML profile as \eqref{eq:PMLprofile} makes each subdomain has an overlapping region with its neighbor subdomains, thus we next define an overlapping domain decomposition of the two-dimensional space $\R^2$ as 
$$\widetilde{\Omega}_{i,j}
: = (\tilde{\xi}_{i}-d, \tilde{\xi}_{i+1}+d) 
\times (\tilde{\eta}_{j} - d, \tilde{\eta}_{j+1} + d),
\qquad i = 1, 2, \ldots, \Nbx, j = 1, 2, \ldots, \Nby,$$
where 
\begin{align*}
\tilde{\xi}_{i} &= \left\{ 
\begin{array}{ll}
-\infty,  &\,\,\, i = 1,\\
\xi_i,  & \,\,\, i = 2,\ldots,\Nbx,\\
+\infty, & \,\,\, i = \Nbx+1,
\end{array}
\right. &  
\tilde{\eta}_{j} &= \left\{
\begin{array}{ll}
-\infty,  &\,\,\, j = 1,\\
\eta_j,  & \,\,\, j = 2,\ldots,\Nby,\\
+\infty, & \,\,\, j = \Nby+1.
\end{array}
\right. 
\end{align*}

Similarly, for the cuboidal domain $\Omega = [-l_1,l_1]\times[-l_2,l_2]\times[-l_3,l_3]$ in $\R^3$, the partition in $z$-direction is done with $\Delta\zeta = 2 \l_3 / \Nbz$, $\zeta_k = -l_3 + (k-1) \Delta\zeta$ for $k = 1, \ldots, \Nbz+1 $, then we have $\Nbx \times \Nby \times \Nbz$ nonoverlapping subdomains $\Omega_{i,j,k}$, overlapping subdomains $\widetilde{\Omega}_{i,j,k}$, and decomposed sources $f_{i,j,k} = f\cdot \chi_{\Omega_{i,j,k}}$.

\subsection{The additive overlapping DDM with source transfer}

The  additive  DDM proposed in \cite{Leng2019} is based on checkerboard domain decomposition and source transfer between overlapping subdomains. Let us first illustrate it with the $2\times 2$ domain partition in $\R^2$.
A few notations are first introduced below.
Two truncation functions are defined as
$$\chi_{\rightarrow} = \chi_{(\xi_2, +\infty) \times (-\infty, +\infty)}, \quad
\chi_{\uparrow} =\chi_{(-\infty, +\infty) \times (\eta_2, +\infty)},$$
and four one-dimensional cutoff functions are defined as
$$\beta_{\rightarrow} = \widehat{\beta}\Big(\frac{x_1 - \xi_2}{d}\Big),\quad
\beta_{\leftarrow} = \widehat{\beta}\Big(\frac{\xi_2 - x_1}{d}\Big),\quad
\beta_{\downarrow} = \widehat{\beta}\Big(\frac{\eta_2 - x_2}{d}\Big),\quad
\beta_{\uparrow} = \widehat{\beta}(\frac{x_2 - \eta_2}{d}\Big),$$
 where $\widehat{\beta}(t)$ is a monotone cutoff function in $C^2(\R)$ such that $\widehat{\beta}(t) = 1$ for
$t \leq 0$, $\widehat{\beta}(t) = 0$ for $t \geq 1$, and
$|\widehat{\beta}^{\prime}(t)| < C$ for $ 0 < t < 1.$ 
With the above one-dimensional  cutoff functions, the corresponding two-dimensional cutoff functions 
associated with  the subdomains $\Omega_{i,j}$ ($i,j=1,2$) are defined as  
$$\beta_{\Omega_{1,1}} = \beta_{\rightarrow} \beta_{\uparrow},
\quad\beta_{\Omega_{2,1}} = \beta_{\leftarrow} \beta_{\uparrow},
\quad\beta_{\Omega_{1,2}} = \beta_{\rightarrow} \beta_{\downarrow},
\quad\beta_{\Omega_{2,2}} = \beta_{\leftarrow} \beta_{\downarrow}. $$
Denote by $\L_{i,j}$  the linear operator associated with the PML problem $\P_{\Omega_{i,j}}$.

Let us first consider the simple  case that the source lies inside $\Omega_{1,1}$. At step 1, the subdomain PML problem $\P_{\Omega_{1,1}}$ is solved with the source $f_{1,1}$ and the solution is denoted by $u_{0}$, as  is shown in Figure \ref{fig:ddm2d}-(a), 
the horizontal and vertical transferred sources are computed on each subdomain. 
At step 2, 
the local problem $\P_{\Omega_{2,1}}$ is solved with the right transferred source
$\L_{{1,1}}(u_0\beta_{\rightarrow}) \chi_{\rightarrow}$  (see Figure \ref{fig:ddm2d}-(b)) as the local source,
and the solution is denoted by $u_{\rightarrow}$ (see Figure \ref{fig:ddm2d} (c)).  By using 
Lemma \ref{lemma:src_trans} for $\Omega_{1,2;1,1}$, the rightward source transfer is applied and we have
\begin{equation} \label{eq:u2x2lower}
  u_0\beta_{\rightarrow} + u_{\rightarrow} = u, \qquad  \text{in} 
\quad (-\infty,+\infty)\times(-\infty,\eta_2+d),
\end{equation}
as is shown in
Figure \ref{fig:ddm2d}-(d).  Similarly, the local solution
$u_{\uparrow}$ of the subdomain $\Omega_{1,2}$ are obtained and we
have
\begin{equation} \label{eq:u2x2left}
  u_0\beta_{\uparrow} + u_{\uparrow} = u, \qquad  \text{in} 
\quad (-\infty, \xi_2+d)\times(-\infty,+\infty),
\end{equation}
as  shown in
Figure \ref{fig:ddm2d}-(f) and (g).

Note that the additive DDM  has an important property that the subdomain solving is not direction related, in the sense that, on each subdomain, the transferred sources coming from different directions are summed into one local source and then solved. 
  At step 3, the subdomain solution has already been constructed in $\Omega_{1,1}$, $\Omega_{1,2}$ and $\Omega_{2,1}$ in previous steps, thus only the solution for $\Omega_{2,2}$ needs to be constructed.  However, in order to derive an algorithm that is not direction related as mentioned above, instead of directly using a corner  source transfer, the horizontal and vertical source transfers are applied again on each subdomain, though the nonzero ones are only the upper transfer of $u_\rightarrow$ from $\Omega_{2,1}$ and the right transfer of $u_\uparrow$ from $\Omega_{1,2}$. In addition to the horizontal and vertical sources $-\L_{{1,2}}(u_{\uparrow}\beta_{\rightarrow}) \chi_{\rightarrow}$ and $-\L_{{2,1}}(u_{\rightarrow}\beta_{\uparrow}) \chi_{\uparrow}$ transferred to $\Omega_{2,2}$,  the corner direction transferred source $-\L_{{2,2}}(u_{0}\beta_{\rightarrow}\beta_{\uparrow}) \chi_{\rightarrow}\chi_{\uparrow}$ is also passed to $\Omega_{2,2}$. Using \eqref{eq:u2x2lower} and \eqref{eq:u2x2left}, we have that the summation of transferred sources for $\Omega_{2,2}$ is in fact $\L_{\Omega}(u \overline{\beta}_{\nearrow})$, where $\overline{\beta}_{\nearrow} = 1 - (1-\beta_{\rightarrow})(1-\beta_{\uparrow})$ is the cutoff function for the L-shaped domain $\Omega_{1,1} \cup \Omega_{2,1} \cup \Omega_{1,2} $, then by using Lemma \ref{lemma:src_trans}, the corner  source transfer is applied, and we know that $u \overline{\beta}_{\nearrow} + u_{\nearrow} = u$. 
Now that the local solutions in all subdomains are obtained, a formula of the global solution expressed as the combination of local solutions  is then to be derived.
From  \eqref{eq:u2x2lower} and \eqref{eq:u2x2left}, it holds that
\begin{align}
u_0\beta_{\uparrow}\beta_{\rightarrow} + u_{\rightarrow} \beta_{\uparrow} &= u \beta_{\uparrow},  \;\qquad \text{in} \,\,\, \R^2,\\
u_0\beta_{\uparrow}\beta_{\rightarrow} + u_{\uparrow} \beta_{\rightarrow} &= u \beta_{\rightarrow}, \qquad  \text{in} \,\,\, \R^2,
\end{align}
thus $u \overline{\beta}_{\nearrow}$ can be expressed as 
$u \overline{\beta}_{\nearrow} = u_0\beta_{\rightarrow}\beta_{\uparrow} +
u_{\rightarrow}\beta_{\uparrow} + u_{\uparrow} \beta_{\rightarrow}$,
and we have 
$$u_0\beta_{\rightarrow}\beta_{\uparrow} +
u_{\rightarrow}\beta_{\uparrow} + u_{\uparrow} \beta_{\rightarrow} +
u_{\nearrow} = u,$$
or in a more symmetric form,
$$u_0\beta_{\Omega_{1,1}} + u_{\rightarrow} \beta_{\Omega_{2,1}} + u_{\uparrow} \beta_{\Omega_{1,2}} + u_{\nearrow}\beta_{\Omega_{2,2}} = u.$$

Now we are ready to state the additive DDM for $2\times 2$ domain partition in the case of general source $f$.  At step 1, solve the local problems $\P_{\Omega_{i,j}}$  with local sources $f_{i,j}$ and denote the solutions  as $u_{i,j}^1$.  Then at step 2, solve the local problems $\P_{\Omega_{i,j}}$  with horizontal and vertical transferred sources 
calculated by using the solutions of step 1, and denote the solutions as $u_{i,j}^2$. Finally at step 3, solve the local problems $\P_{\Omega_{i,j}}$    with horizontal, vertical and corner transferred sources calculated by using the solutions of step 1 and 2, and denote the solution as $u_{i,j}^2$. Then the DDM solution is constructed to be
$$u_{\text{DDM}} = \sum\limits_{s=1,2,3} \sum\limits_{i = 1,2 \atop j = 1,2} u_{i,j}^{s}\beta_{\Omega_{i,j}}, $$ 
which is indeed the solution to $\P_{\Omega}$ with source $f$ in the constant medium case. 

\def\wdff{0.24}
\begin{figure*}[!ht]	
	\centering
	\begin{minipage}[t]{\wdff\linewidth}
		\centering
		\includegraphics[width=1\textwidth]{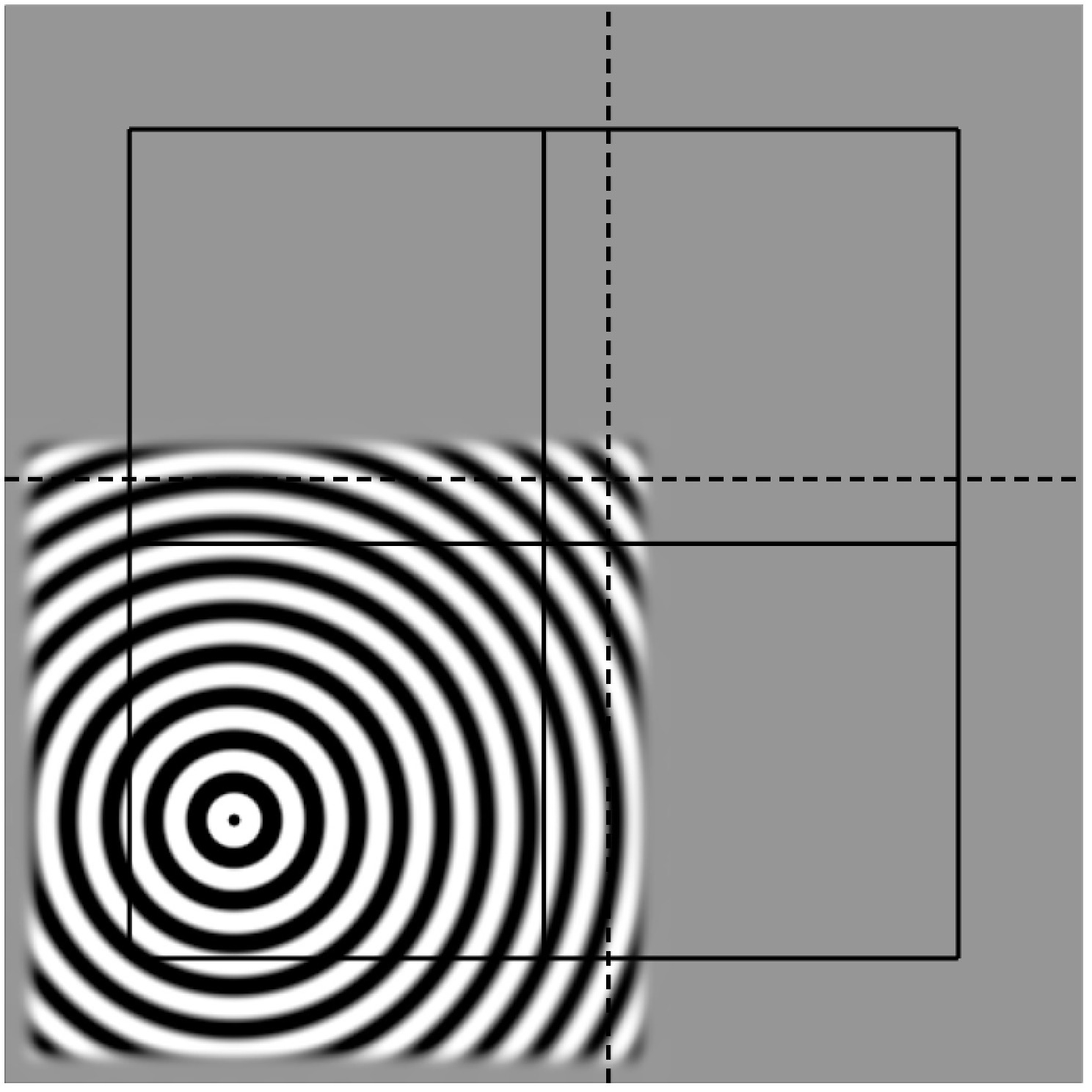}\\
		(a) $u_0$
	\end{minipage}
	\begin{minipage}[t]{\wdff\linewidth}
		\centering
		\includegraphics[width=1\textwidth]{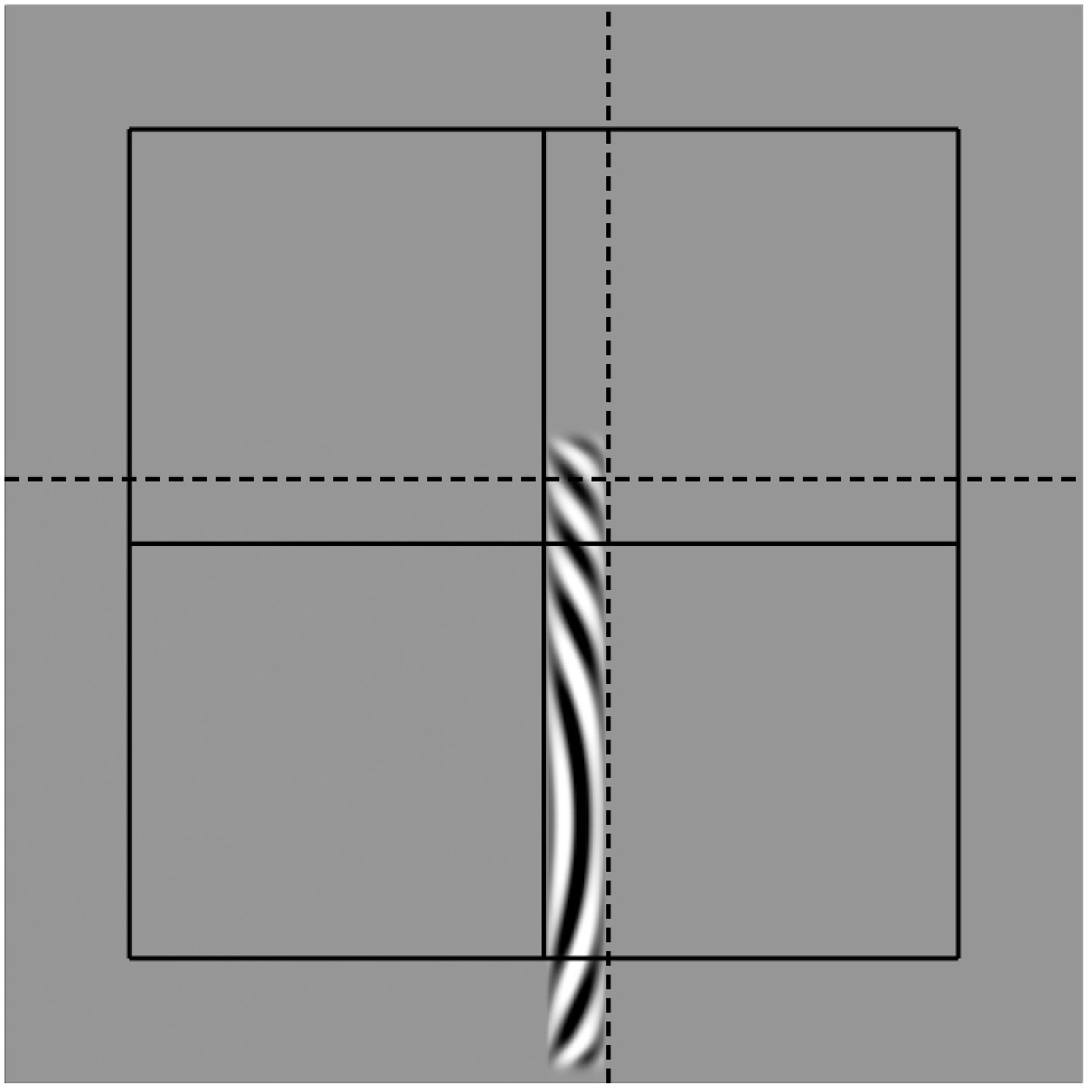}\\
		(b) $\L_{{1,1}}u_0(\beta_{\rightarrow} ) \chi_{\rightarrow}$
	\end{minipage}
	\begin{minipage}[t]{\wdff\linewidth}
		\centering
		\includegraphics[width=1\textwidth]{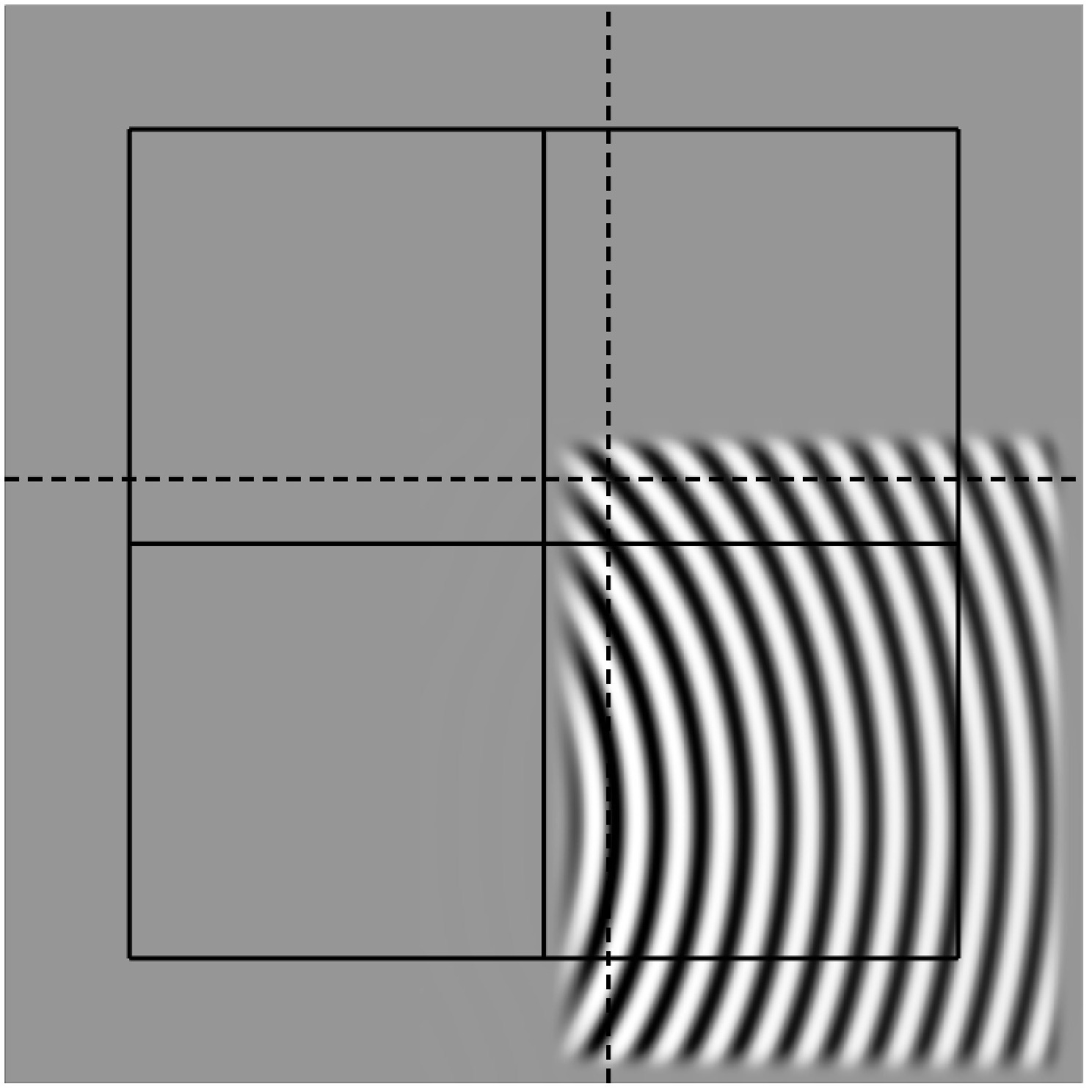}\\
		(c) $u_{\rightarrow}$
	\end{minipage}
	\begin{minipage}[t]{\wdff\linewidth}	
		\centering
		\includegraphics[width=1\textwidth]{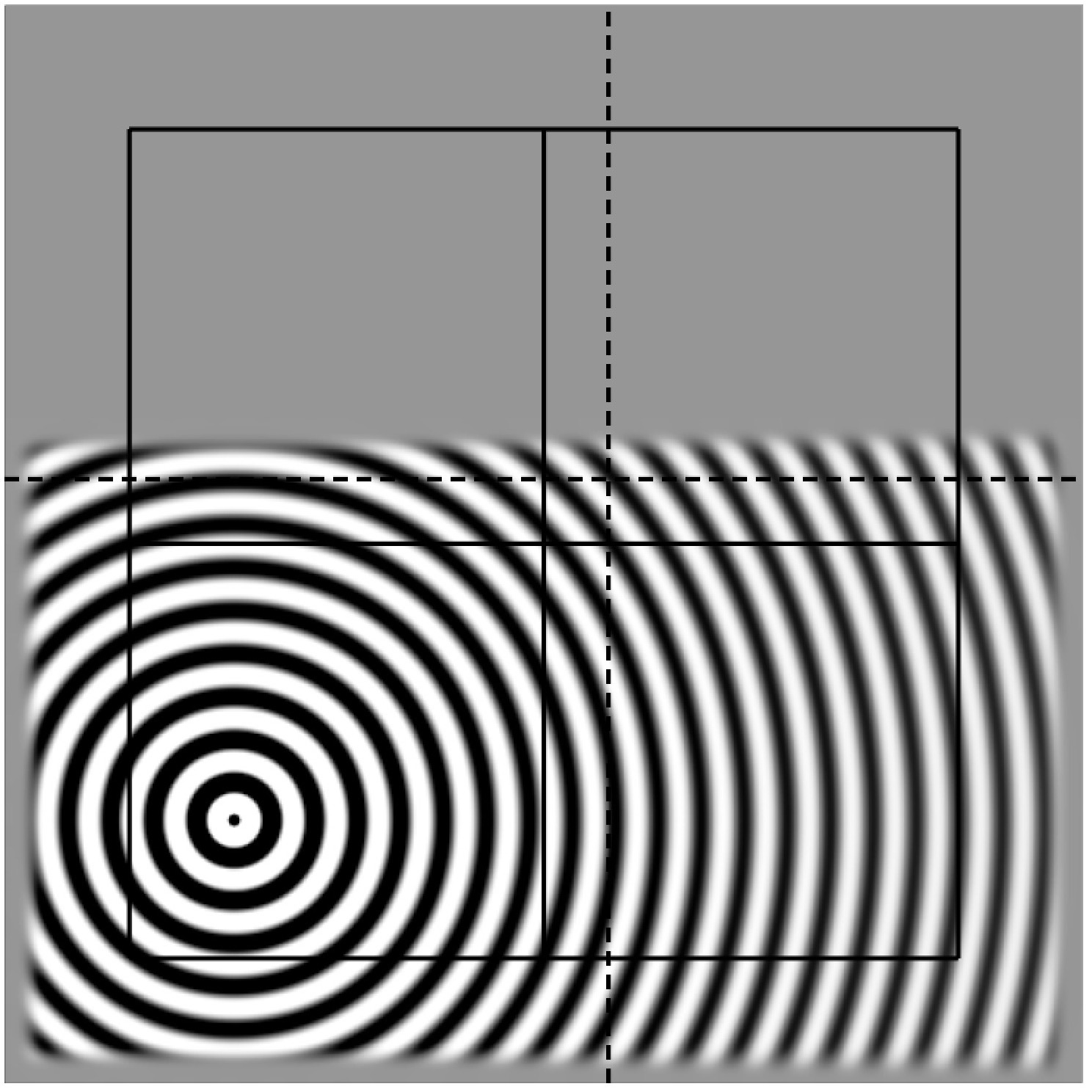}\\
		(d) $u_0 \beta_{\rightarrow} + u_{\rightarrow} $
	\end{minipage}	\\	
	\vspace{0.3cm}
	
	\begin{minipage}[t]{\wdff\linewidth}
		\centering
		\includegraphics[width=1\textwidth]{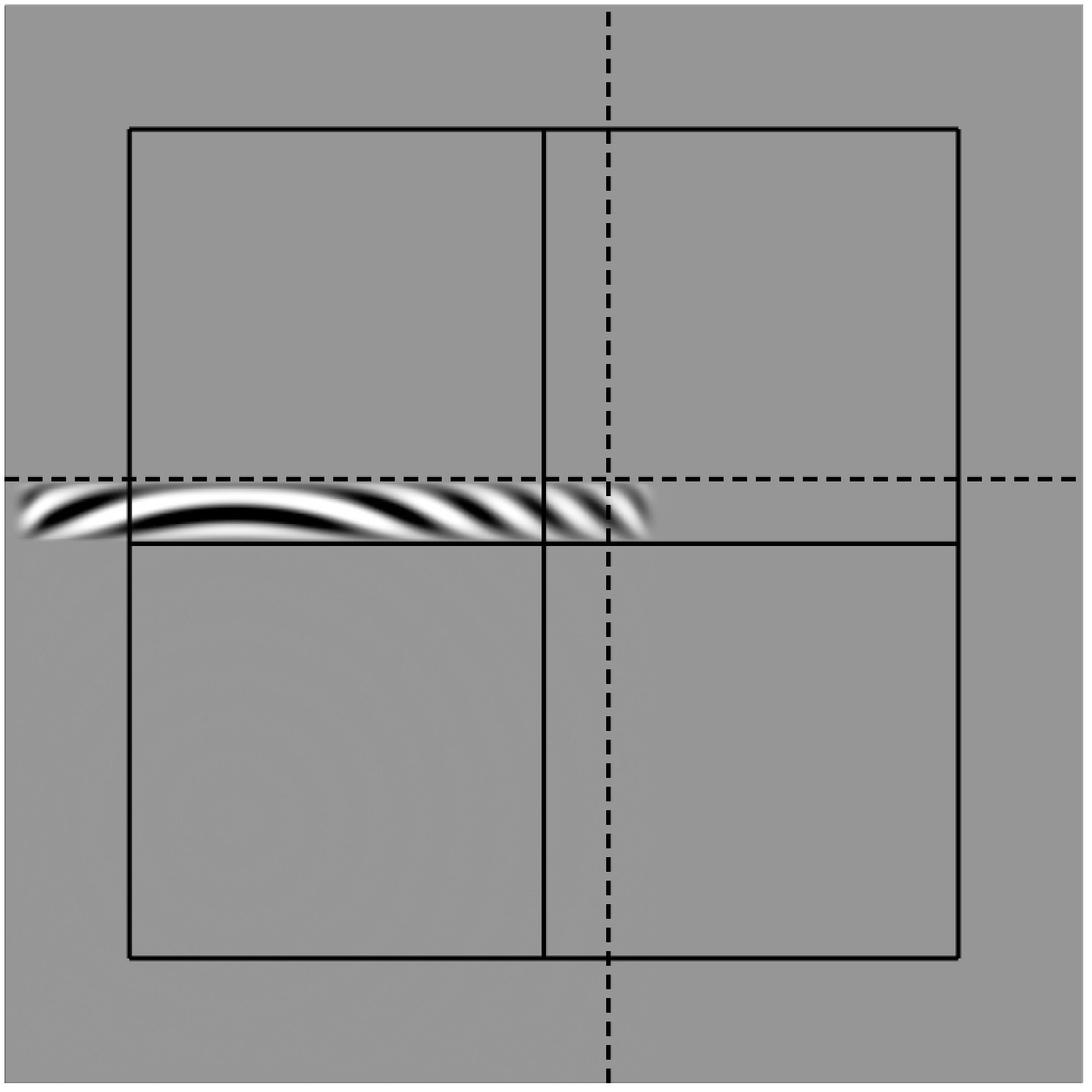}\\
		(e) $\L_{{1,1}}(u_0\beta_{\uparrow}) \chi_{\uparrow}$
	\end{minipage}
	\begin{minipage}[t]{\wdff\linewidth}
		\centering
		\includegraphics[width=1\textwidth]{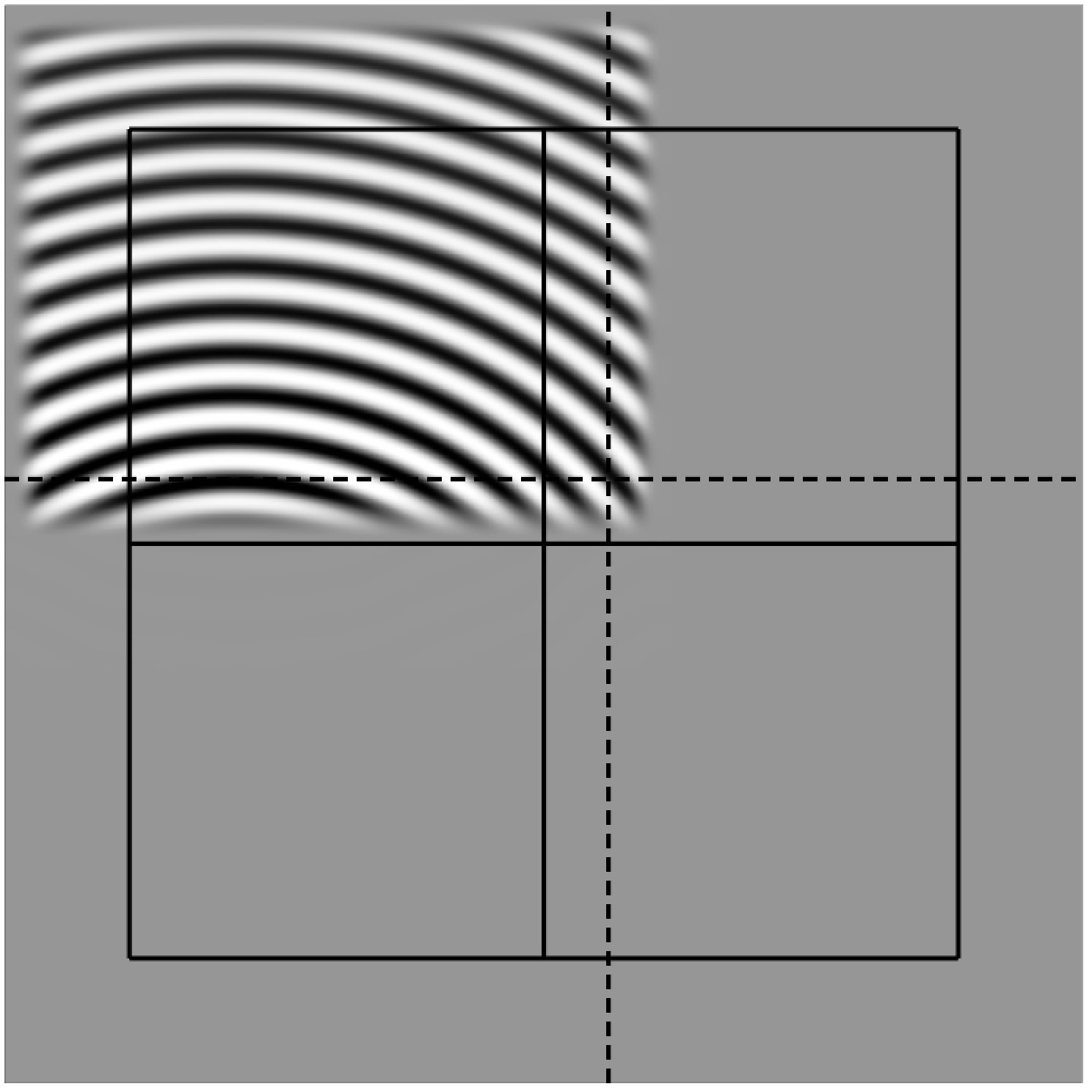}\\
		(f) $u_{\uparrow}$
	\end{minipage}
	\begin{minipage}[t]{\wdff\linewidth}	
		\centering
		\includegraphics[width=1\textwidth]{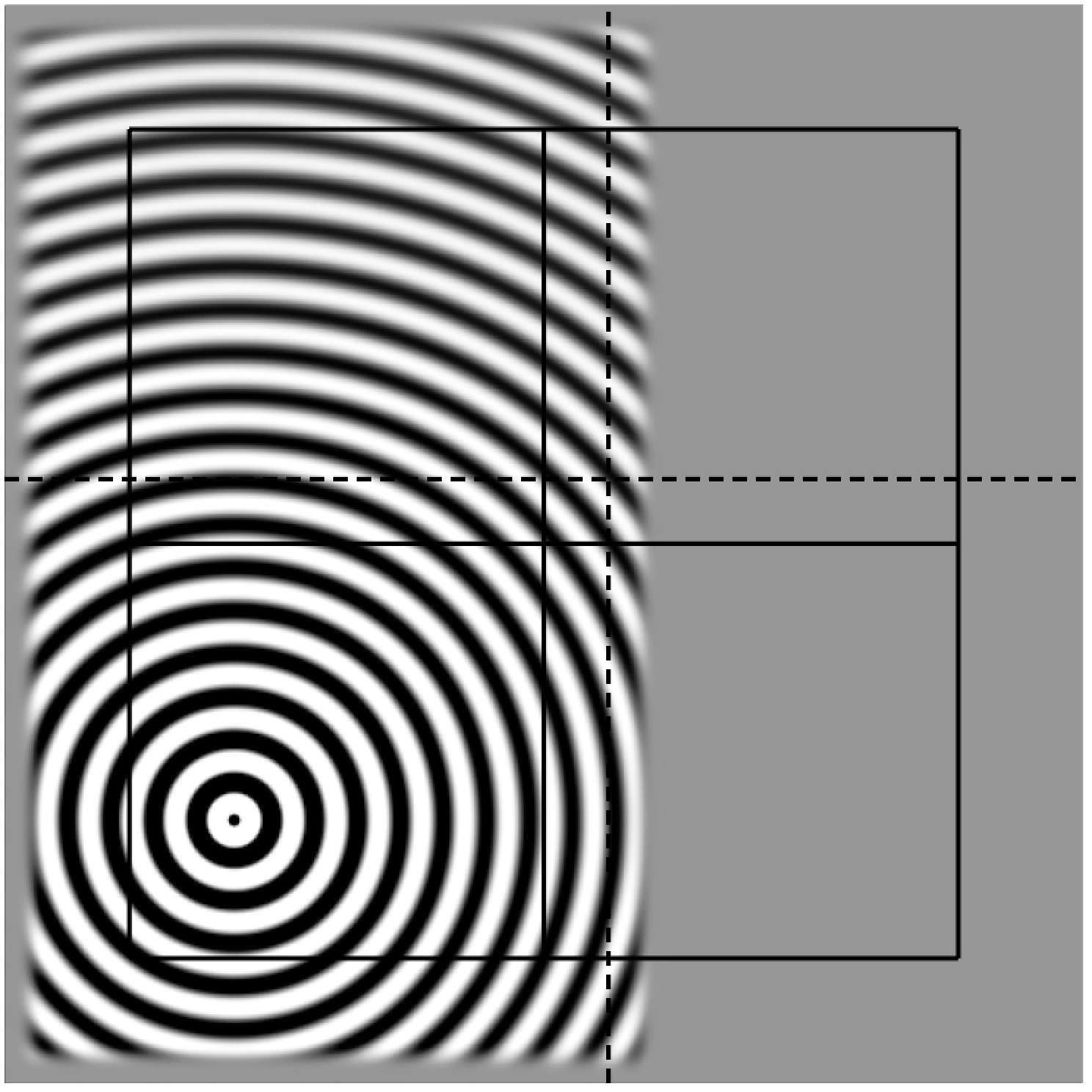}\\
		(g) $u_0 \beta_{\uparrow} + u_{\uparrow} $
	\end{minipage}
	\begin{minipage}[t]{\wdff\linewidth}
		\centering
		\includegraphics[width=1\textwidth]{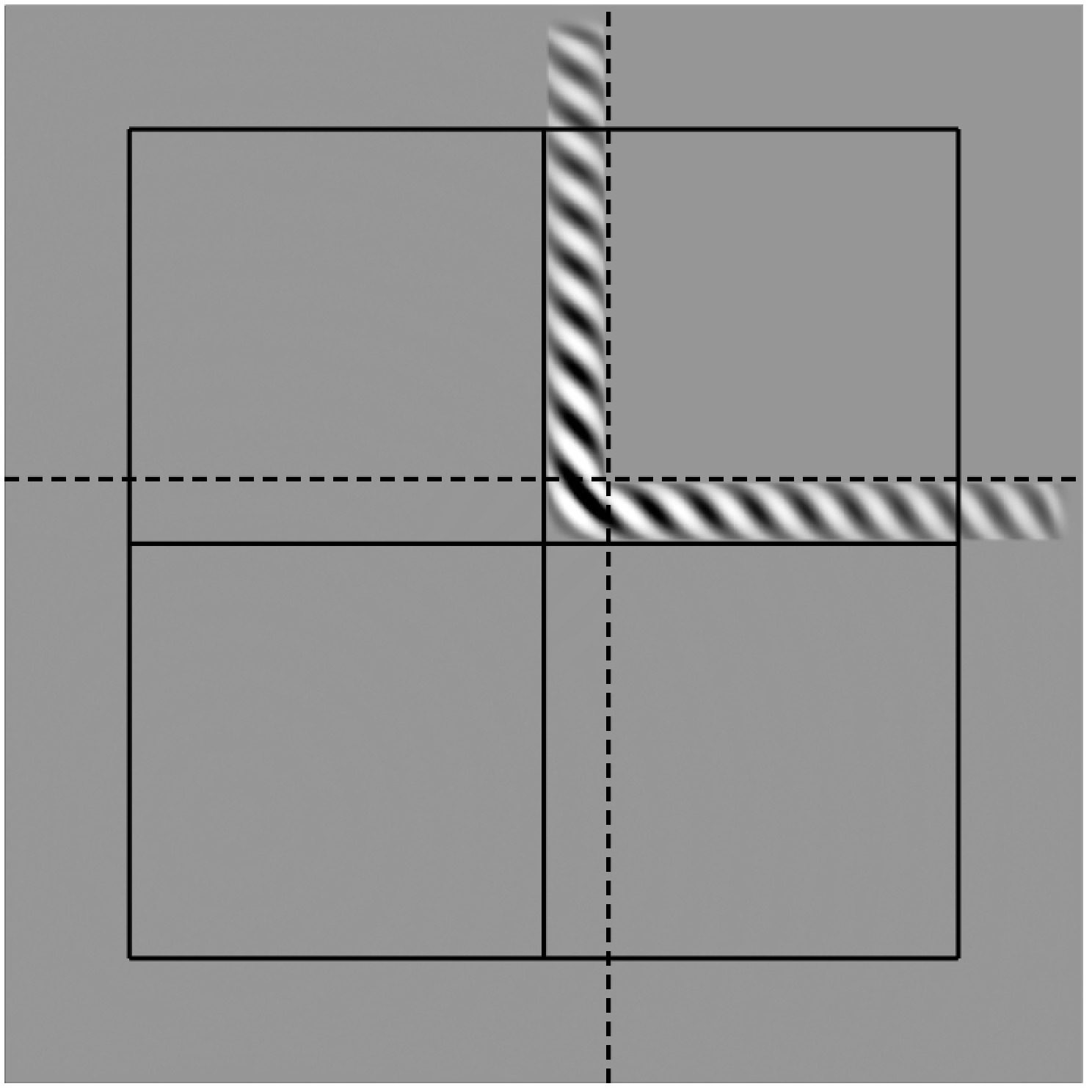}\\
		(h) 
		$\L_{{1,2}}(u_{\uparrow}\beta_{\rightarrow})
		\chi_{\rightarrow}$\\
		$+\L_{{2,1}}(u_{\rightarrow}\beta_{\uparrow}) \chi_{\uparrow}$\\
		$+\L_{{2,2}}(u_{0}\beta_{\rightarrow}\beta_{\uparrow})
		\chi_{\rightarrow}\chi_{\uparrow}$ 
	\end{minipage}		\\	
	\vspace{0.3cm}
	
	\begin{minipage}[t]{\wdff\linewidth}
		\centering
		\includegraphics[width=1\textwidth]{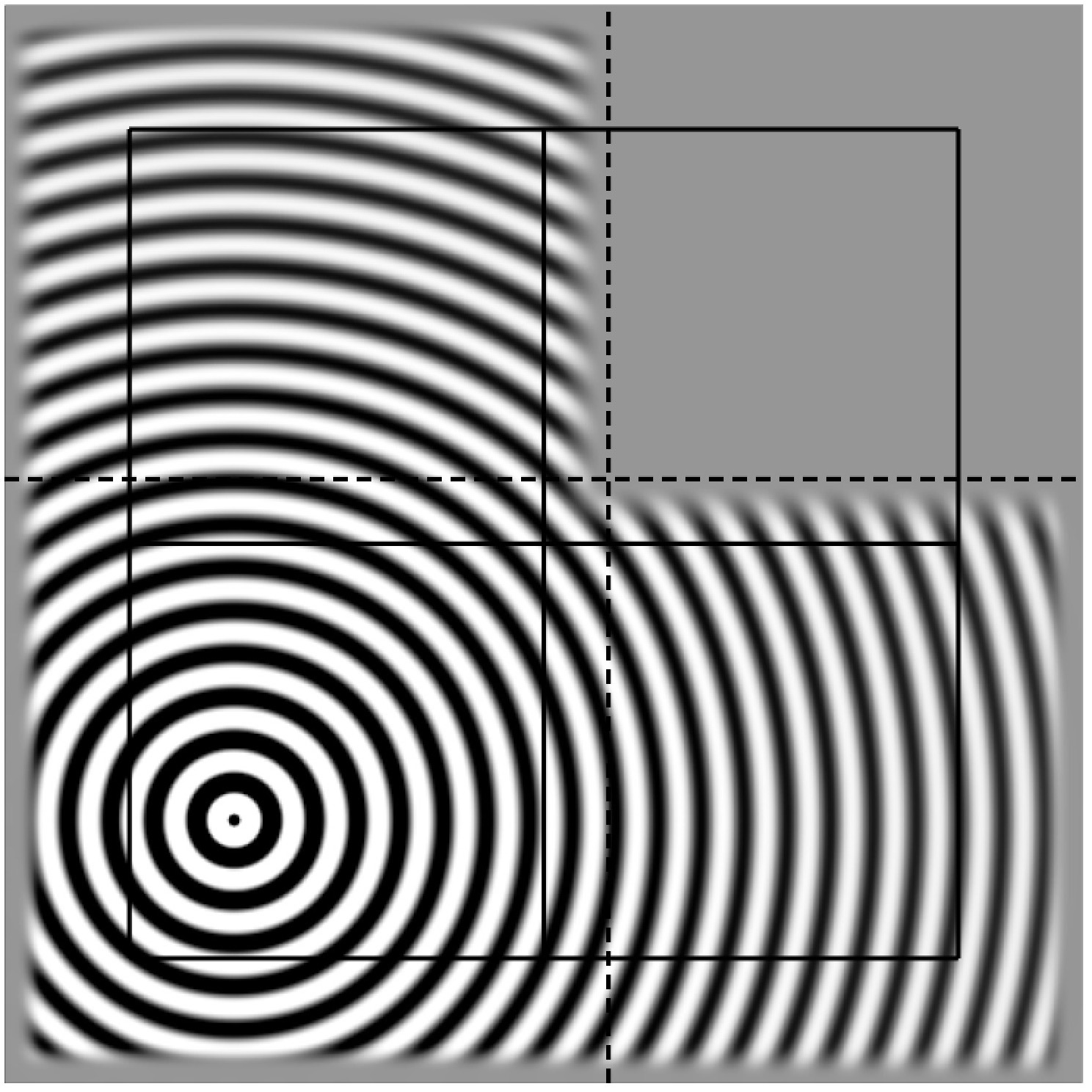}\\
		(j) $u \overline{\beta}_{\nearrow}$
	\end{minipage}
	\begin{minipage}[t]{\wdff\linewidth}	
		\centering
		\includegraphics[width=1\textwidth]{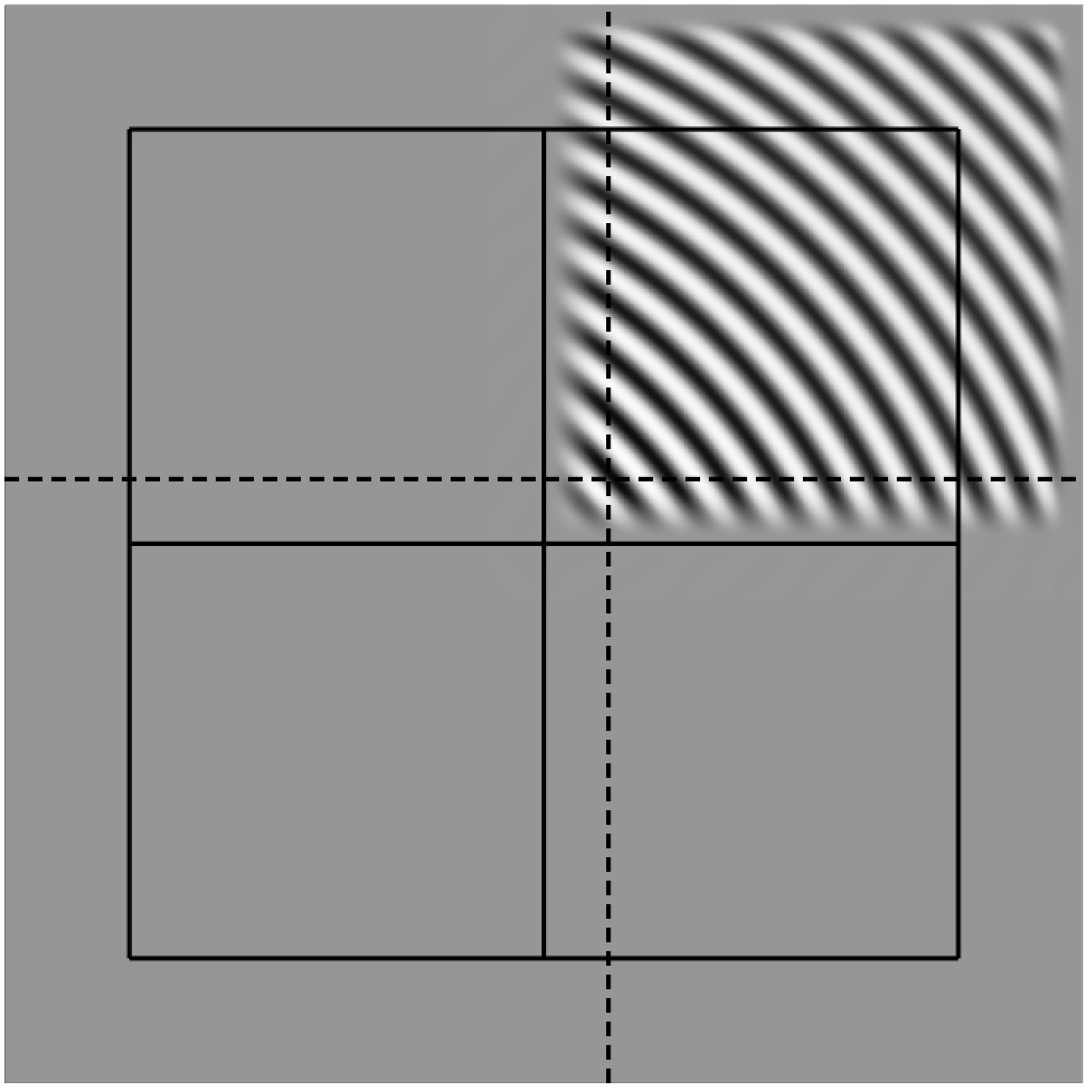}\\
		(k) $u_{\nearrow}$
	\end{minipage}	
	\begin{minipage}[t]{\wdff\linewidth}
		\centering
		\includegraphics[width=1\textwidth]{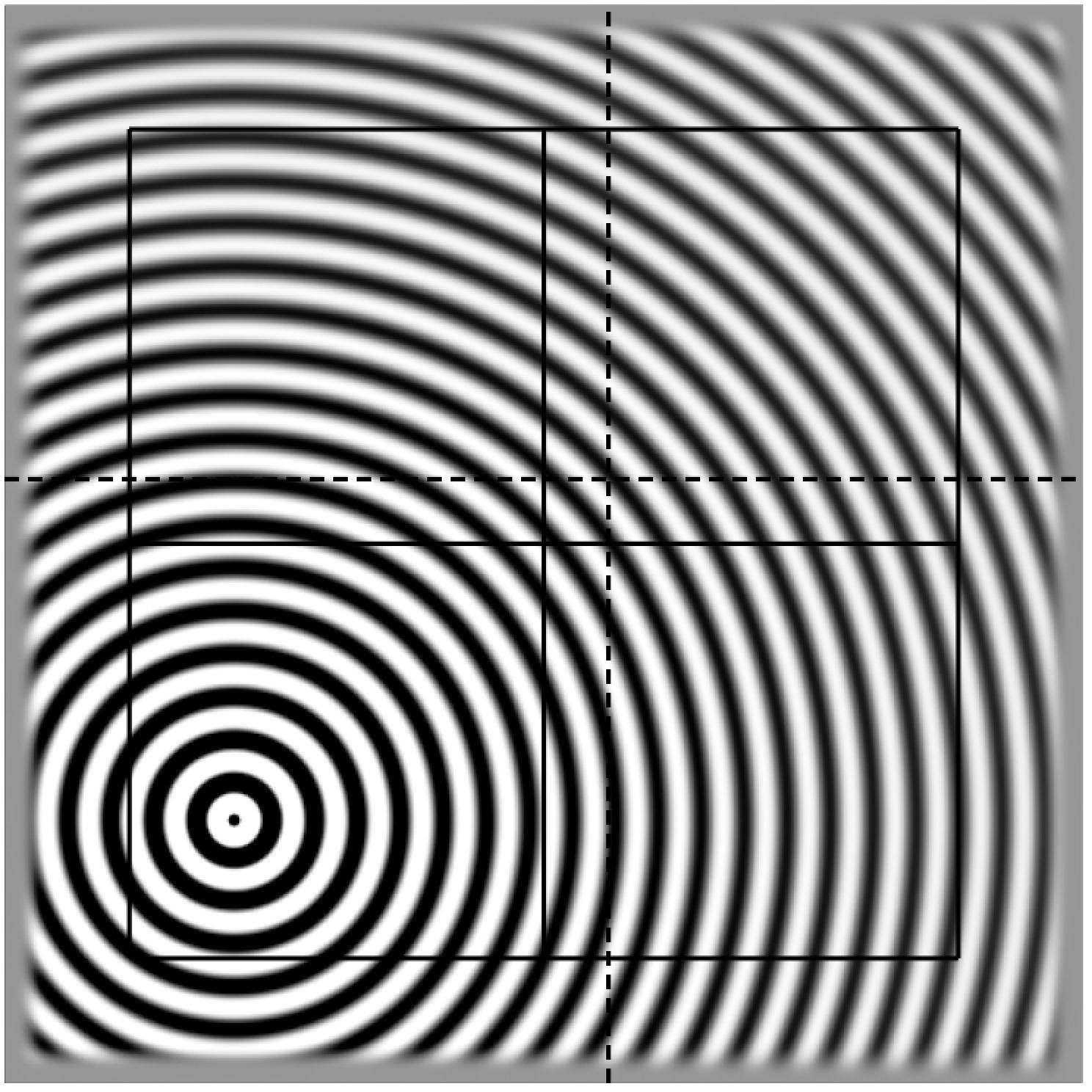}\\
		(l) $u$
	\end{minipage}
	\caption{{Illustration of the source transfer DDM solving process with the $2\times 2$ domain decomposition, where the source lies in $\Omega_{1,1}$ only.  The borders of subdomains  are shown with solid lines, and the upper and  right borders of overlapping regions are shown with dotted lines, which are  $x_1=\xi_{2}+d$	and $x_2=\eta_{2}+d$, respectively.} \label{fig:ddm2d}}
\end{figure*}

To  illustrate the extension of the additive DDM from $2\times 2$ to $ \Nbx  \times \Nby$ domain partitions,  let us define the following one-dimensional cutoff functions, 
\begin{align*}
\beta^{(1)}_{\Box, i}(x_1) &= \left\{ 
\begin{array}{ll}
\widehat{\beta}(\frac{ \xi_i - x_1}{d}),  & \, \Box = -1, \,\text{and}\, i \neq 1, \\
\widehat{\beta}(\frac{x_1 - \xi_{i+1} }{d}),  & \, \Box = 1, \,\text{and}\, i \neq \Nbx, \\
1,  & \, \, \text{otherwize},
\end{array}
\right. &  \beta^{(2)}_{\vartriangle, j}(x_2) &= \left\{
\begin{array}{ll}
\widehat{\beta}(\frac{\eta_j - x_2}{d}),  & \, \vartriangle = -1, \,\text{and}\, j \neq 1, \\
\widehat{\beta}(\frac{x_2 - \eta_{j+1}}{d}),  & \, \vartriangle = 1, \,\text{and}\, j \neq \Nby, \\
1,  & \, \text{otherwize},
\end{array}
\right. 
\end{align*}
for $i=1,\ldots, \Nbx$, $j=1,\ldots, \Nby$ and $\Box, \vartriangle = \pm 1, 0$.
Note that the symbols $\Box$ and $\vartriangle$ are used to indicate the signs of the $x$ and $y$  components of a direction, respectively. 
With the above one-dimensional cutoff functions, two-dimensional ones are
defined as 
$$\beta_{\Box, \vartriangle;\, i, j}(x_1, x_2) := \beta^{(1)}_{\Box, i}(x_1)  \beta^{(2)}_{\vartriangle, j}(x_2), $$ 
for $\Box, \vartriangle = -1,0,1$ with $(\Box, \vartriangle) \neq (0,0)$, 
and 
\begin{align*}
\beta_{0, 0;\, i, j}(x_1, x_2) :=  \beta^{(1)}_{-1, i}(x_1) \beta^{(1)}_{+1, i+1}(x_1) \beta^{(2)}_{-1, j}(x_2)  \beta^{(2)}_{+1, j+1}(x_2).
\end{align*}

Define the following truncation functions for the half spaces and the quarter spaces in $\R^2$:
$$\chi_{\Box, \vartriangle;\, i, j}(x_1, x_2) := \chi_{I^{(1)}_{\, \Box, i}(x_1) \times I^{(2)}_{\vartriangle, j}(x_2)},$$
where
\begin{align*}
I^{(1)}_{\Box, i}(x_1) &= \left\{ 
\begin{array}{ll}
(-\infty, \xi_i),  & \, \Box = -1,\\
  (\xi_{i+1}, +\infty),  & \, \Box = 1, \\
(-\infty, +\infty),  & \, \Box = 0,                     
\end{array}
\right. &  I^{(2)}_{\vartriangle, j}(x_2) &= \left\{
\begin{array}{ll}
(-\infty, \eta_j),  & \, \vartriangle = -1,\\
(\eta_{j+1}, +\infty),  & \, \vartriangle = 1,\\
(-\infty, +\infty),  & \, \vartriangle = 0.
\end{array}
\right. 
\end{align*}
Using the above cutoff functions, truncation functions and linear operators,
we are able to define the source transfer operators in $\R^2$ as:
\begin{align*}
\Psi_{\Box, \vartriangle;\, i, j} (v) := -\L_{i + \Box, j + \vartriangle} (\beta_{\Box, \vartriangle;\, i, j}  v) \chi_{\Box, \vartriangle;\, i, j},
\end{align*}
for $ \Box, \vartriangle = \pm 1, 0$ with $(\Box, \vartriangle) \neq (0,0).$
Then the additive overlapping DDM with source transfer in $\R^2$  \cite{Leng2019} can be stated  as follows:

\begin{algorithm_}[Additive overlapping DDM with source transfer   in $\R^2$  \cite{Leng2019}]~ 
	\label{alg:add2D}
   \begin{algorithmic}[1]
   \State Set $\{u^{0}_{i,j}\} = 0$ in $\R^2$ for $i=1,2,\ldots, \Nbx,$ $j =1,2,\ldots,\Nby$.
   \State Step 1: solve the PML problems $\P_{\Omega_{i,j}}$ with the source $f_{i,j}$
     \begin{equation} \label{eq:alg_1_2}
       \L_{i,j} u^{1}_{i,j} = f_{i,j}, \quad in \;\;\R^2,
     \end{equation} 
     for $ i=1,2,\ldots,\Nbx,\; j =1,2,\ldots,\Nby$. 
     \For {Step $s = 2, 3, \ldots, \Nbx + \Nby - 1$}
     \State Solve the local subdomain problems: for $ i=1,2,\ldots,\Nbx,\; j =1,2,\ldots,\Nby$,
     \begin{align}  \label{eq:alg_2_2}
       \L_{i,j} u_{i,j}^{s} =
       \sum\limits_{\substack{\Box, \vartriangle = -1,0,1  \\  (\Box, \vartriangle) \neq (0, 0) } } 
       \Psi_{\Box, \vartriangle;\, i-\Box, j-\vartriangle} (u_{i-\Box, j-\vartriangle}^{s - |\Box| - |\vartriangle|}).
     \end{align}
     \EndFor
   \State The DDM solution for  $\P_{\Omega}$ with the source $f$ is then given by
     \begin{equation} \label{eq:sumadd2D}
       u_{\text{DDM}} =  \sum\limits_{s=1, \cdots, \Nbx+\Nby-1} \sum\limits_{i = 1,\ldots,\Nbx \atop j = 1,\ldots,\Nby} 
       \beta_{0, 0;\, {i, j}} u_{i,j}^s.
     \end{equation}
   \end{algorithmic}
 \end{algorithm_}
\vspace{0.2cm}

For the constant medium case, it is proved in \cite{Leng2019} that the DDM solution $u_{\text{DDM}}$ defined by \eqref{eq:sumadd2D} is the exactly the solution of the $\P_{\Omega}$ with the source $f$.  In the above Algorithm \ref{alg:add2D}, all the sources $f_{i,j}$, $i=1,2,\ldots,\Nbx$, $j = 1,2,\ldots,\Nby$ are solved simultaneously.
For a given source $f_{i_0, j_0}$, 
depending on the relative position of the subdomain $\Omega_{i,j}$ to  subdomain $\Omega_{i_0,j_0}$, the subdomains can be divided into two types in the solving process:
the first type consists of  the ones with either $i = i_0$ or $j = j_0$, 
of which the local solutions effected by source $f_{i_0, j_0}$  are obtained by applying horizontal or vertical source transfer, 
 the other type consists of  the ones with  $i \neq i_0$ and $j \neq j_0$, 
of which the local solutions effected by source $f_{i_0, j_0}$ are obtained by applying corner  source transfer. 

To illustrate the method in  $\R^3$, the same notations as above for the $x$ and $y$ components are re-used and we also add the notations for the $z$ component. 
The one-dimensional cutoff functions in the $z$ direction are defined as
\begin{align*}
\beta^{(3)}_{\ocircle, k}(x_3) &= \left\{ 
\begin{array}{ll}
\widehat{\beta}(\frac{\zeta_k - x_3}{d}),  & \, \ocircle = -1, \,\text{and}\, k \neq 1, \\
\widehat{\beta}(\frac{x_3 - \zeta_{k+1}}{d}),  & \, \ocircle = 1, \,\text{and}\, k \neq \Nbz,\\
1,  & \, \text{otherwise,}
\end{array}
\right.
\end{align*}
for $k = 1,\ldots,\Nbz$ and $\ocircle = \pm 1, 0$, 
then the cutoff function for each subdomain are
$$\beta_{\Box, \vartriangle, \ocircle;\, i, j, k}(\xi, \eta, \zeta) :=
\beta^{(1)}_{\Box, i}(\xi) \beta^{(2)}_{\vartriangle, j}(\eta)
\beta^{(3)}_{\ocircle, k}(\zeta),$$ 
where $\Box, \vartriangle, \ocircle = \pm 1, 0$ with $(\Box, \vartriangle, \ocircle) \neq (0, 0, 0)$,
and 
\begin{align*}
\beta_{0, 0, 0;\, i, j}(x_1, x_2) =  \beta^{(1)}_{-1, i}(x_1) \beta^{(1)}_{+1, i+1}(x_1) \beta^{(2)}_{-1, j}(x_2)  \beta^{(2)}_{+1, j+1}(x_2)  \beta^{(3)}_{-1, k}(x_3)  \beta^{(3)}_{+1, k+1}(x_3).
\end{align*}
The truncation functions for the half spaces, the quarter spaces and the eighth spaces in $\R^3$  are defined as
$$\chi_{\Box, \vartriangle, \ocircle;\, i, j, k}(x_1, x_2, x_3) := \chi_{I^{(1)}_{\, \Box, i}(x_1) \times I^{(2)}_{\vartriangle, j}(x_2)
	\times I^{(3)}_{\ocircle, k}(x_3)},$$ 
	where
\begin{align*}
I^{(3)}_{\ocircle, k}(x_3) &= \left\{ 
\begin{array}{ll}
(-\infty, \zeta_k),  & \, \ocircle = -1,\\
(\zeta_k, +\infty),  & \, \ocircle = 1, \\
(-\infty, +\infty),  & \, \ocircle = 0.                     
\end{array}
\right.
\end{align*}
Then the corresponding transfer function in $\R^3$ is defines as 
\begin{align}
\Psi_{\Box, \vartriangle, \ocircle;\, i, j, k} (v) := -\L_{i + \Box, j + \vartriangle, k + \ocircle} (\beta_{\Box, \vartriangle, \ocircle;\, i, j, k} \, v) \chi_{\Box, \vartriangle, \ocircle;\, i, j, k}, 
\end{align}
for  $\Box, \vartriangle, \ocircle = \pm 1, 0$ and $(\Box, \vartriangle, \ocircle) \neq (0, 0, 0)$.
The additive overlapping DDM with source transfer in $\R^3$  \cite{Leng2019} can be stated as follows:

\begin{algorithm_}[Additive overlapping DDM with source transfer  in $\R^3$  \cite{Leng2019}]~ 
	\label{alg:add3D}
   
  \begin{algorithmic}[1]
   \State Set $\{u^{0}_{i,j,k}\} = 0$ in $\R^3$ for $i=1,2,\ldots,\Nbx,$ $j =1,2,\ldots,\Nby$, $k = 1,2,\ldots,\Nbz$.
   \State Step 1: solve the PML problems $\P_{\Omega_{i,j,k}}$ with the source $f_{i,j, k}$
     \begin{equation} \label{eq:alg_1_3}
       \L_{i,j,k} u^{1}_{i,j,k} = f_{i,j,k}, \quad in \;\;\R^3,
     \end{equation}
     for $ i=1,2,\ldots,\Nbx,\; j =1,2,\ldots,\Nby$, $k = 1,2,\ldots,\Nbz$.
   \For {Step $s = 2, 3, \ldots, \Nbx + \Nby + \Nbz - 2$}
   \State Solve the local subdomain problems: for $ i=1,2,\ldots,\Nbx$, $j =1,2,\ldots,\Nby$, $k = 1,2,\ldots,\Nbz$, 
     \begin{align}  \label{eq:alg_2_3}
       \L_{i,j,k} u_{i,j,k}^{s} =
       \sum\limits_{\substack{\Box, \vartriangle, \ocircle = -1,0,1 \\ (\Box, \vartriangle, \ocircle) \neq (0,0,0)}} 
       \Psi_{\Box, \vartriangle, \ocircle;\, i-\Box, j-\vartriangle, k-\ocircle} (u_{i-\Box, j-\vartriangle, k- \ocircle}^{s - |\Box| - |\vartriangle| - |\ocircle|}).
     \end{align}
   \EndFor  
   \State The DDM solution for  $\P_{\Omega}$ with the source $f$ is then given by
     \begin{equation} \label{eq:sumadd3D}
       u_{\text{DDM}} =  \sum\limits_{s=1, \cdots, \Nbx+\Nby+\Nbz-2} \sum\limits_{\substack{i = 1,\ldots,\Nbx \\ j = 1,\ldots,\Nby \\ k = 1,\ldots, \Nbz } } 
       \beta_{0, 0, 0; i, j, k} u_{i,j, k}^s.
     \end{equation}
     \end{algorithmic}
\end{algorithm_}
\vspace{0.2cm}

It is shown in \cite{Leng2019} that in the case of source $f$ lying in only one subdomain $\Omega_{i_0,j_0}$ in $\R^2$ (or $\Omega_{i_0,j_0,k_0}$ in $\R^3$),  the subdomain $\Omega_{i,j}$ (or $\Omega_{i,j,k}$) performs 
nonzero local solving only at 
step $s = |i - i_0| + |j - j_0|+1$  (or $s = |i - i_0| + |j - j_0|+|k-k_0|+1$), and construct the exact solution $u$ in the subdomain at that very step. This results in subdomain solution marching in diagonal directions, and such diagonal marching  suggests a sweeping type solver, which will be derived in the next section.
We note that this property makes it possible to reduce  the sweeping solve of all directions \cite{Zepeda2019}  to  only diagonal directions.

\section{The diagonal sweeping DDM with source transfer in $\R^2$}
 
In this section we will develop the diagonal sweeping DDM with source transfer in $\R^2$ by starting with the source lying only inside  one subdomain.
If the exact solution is constructed for the case of the source lying within only one subdomain
and the solving procedure does not depend on such specific subdomain,
  then the exact solution could be constructed straightforwardly for the case of general source $f$, 
  since the solutions to  the decomposed sources, $f_{i,j}$'s,  are constructed simultaneously and together they form the total exact solution. 
Without loss of generality, we take a $5\times 5$ ($\Nbx=\Nby=5$) domain partition and assume that the source lies only in $\Omega_{3,3}$ ($i_0=j_0=3$)   in our illustration. There are totally $2^2=4$ diagonal directions in $\R^2$: $(+1,+1)$, $(-1,+1)$, $(+1,-1)$, $(-1,-1)$, and the sweep along each of the directions contains a total of $(\Nbx-1)+(\Nby-1)+1 = \Nbx+\Nby-1=9$ steps.
 
We  perform the {\bf first sweep} along the  direction $(+1,+1)$, i.e., from the  lower-left subdomains to the upper-right subdomains, 
where the $s$-th step of this sweep handles the group of subdomains $\{\Omega_{i,j}\}$ with $(i-1)+(j-1)+1=i+j-1=s$. 
  In the first $(i_0-1)+(j_0-1)=4$ steps, the solution is always zero since the local source in $\Omega_{i,j}$  with $(i-1)+(j-1)+1<  5$ is zero. 
  At step $(i_0-1)+(j_0-1)+1=5$, the subdomain problems in $\Omega_{i_0,j_0}=\Omega_{3,3}$ is solved with the source $f_{3,3}$, $3^2-1=8$  transferred sources are generated and passed to its  neighbor subdomains correspondingly for  later use, as  shown in Figure \ref{fig:sweep2d_12}-(a). 
At step 6, the subdomain problems in $\Omega_{3,4}$ and $\Omega_{4,3}$ are solved. Take $\Omega_{4,3}$ for example, the horizontal source transfer is applied, in which  the rightward transferred source from $\Omega_{3,3}$ at  step 5 is used as the local source for $\Omega_{4,3}$, the local subdomain problem is solved, and 5 new transferred sources are generated and  passed to its  corresponding neighbor subdomains, as shown in Figure \ref{fig:sweep2d_12}-(b).
At step 7, the subdomain problems in $\Omega_{3,5}$, $\Omega_{5,3}$ and $\Omega_{4,4}$ are solved. The cases in $\Omega_{3,5}$ and $\Omega_{5,3}$ are similar to  step 6. As for $\Omega_{4,4}$, the corner  source transfer is applied, in which the upward transferred source from $\Omega_{3,4}$ at step 6, the rightward transferred source from $\Omega_{4,3}$ at step 6, and the upper-right transferred source from $\Omega_{3,3}$ at  step 5 are summed as the local source for $\Omega_{4,4}$, the local subdomain problem is solved, and 3 new  transferred sources are generated and passed to its corresponding  neighbor subdomains, as shown in Figure \ref{fig:sweep2d_12}-(c).
The following steps in the sweeping continues and at step 9, the solution is constructed in 
the upper-right quadrant with respect to $(\xi_3, \eta_3)$, $\Omega_{i_0,\Nbx;j_0,\Nby}=\Omega_{3,5;3,5}$.

We note that the subdomains on which the upwards transfers are solved, namely $\Omega_{3,4}$ and $\Omega_{3,5}$, are handled by this sweep of upper-right direction, while  they are handled by the upwards sweep in the L-sweeps method \cite{Zepeda2019}.
Similarly, the subdomains on which the rightwards  transfers are solved, namely $\Omega_{4,3}$ and $\Omega_{5,3}$, are also handled by this sweep, while they are handled by the rightwards sweep in the L-sweeps method. These illustrate the major difference between the L-sweeps method and the proposed diagonal sweeping method, 
that is the horizontal and vertical sweeps in the former method are merged into the diagonal sweeps in the latter method.

It is clear that the directions of sweeps and the  source transfers are important in designing the sweeping algorithm. We define that two vectors $\d_1$ and $\d_2$ in $\R^2$ are in the {\sl similar direction} if and only if  $\d_1 \cdot \d_2 > 0$.
In the steps of the first sweep, it is found that only the transferred sources in the directions   $(+1,0)$, $(0,+1)$ and $(+1,+1)$ are used and they are in the similar directions of the current sweep $(+1,+1)$, while the others are left
for future  sweeps. Thus  the first rule on the  transferred source  in sweeps  in $\R^2$ is defined as:
\begin{rulee} {\rm (Similar directions in $\R^2$)} \label{rule2d_a}
A transferred source  which is not in the similar direction of one sweep should not be used in that sweep.
\end{rulee}

\def\wdff{0.325}
\begin{figure*}[!ht]
	\centering
	\begin{minipage}[t]{\wdff\linewidth}
		\centering
		\includegraphics[width=0.9\textwidth]{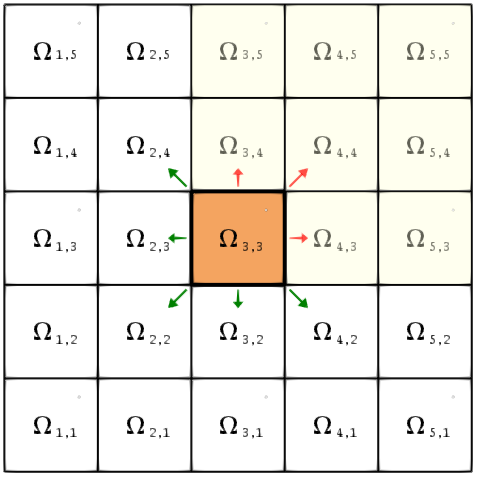}\\
		(a) First sweep: step 5 
	\end{minipage}
	\begin{minipage}[t]{\wdff\linewidth}
		\centering
		\includegraphics[width=0.9\textwidth]{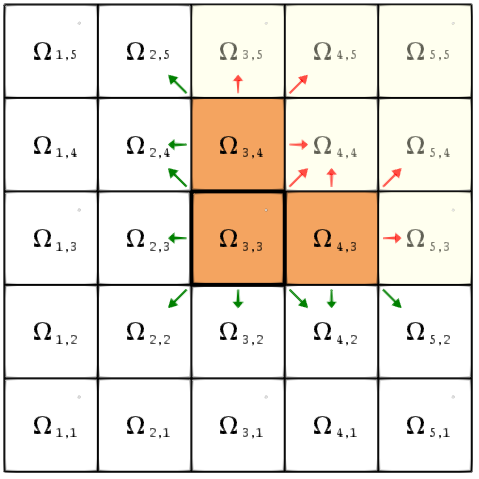}\\
		(b) First sweep: step 6
	\end{minipage}
	\begin{minipage}[t]{\wdff\linewidth}
		\centering
		\includegraphics[width=0.9\textwidth]{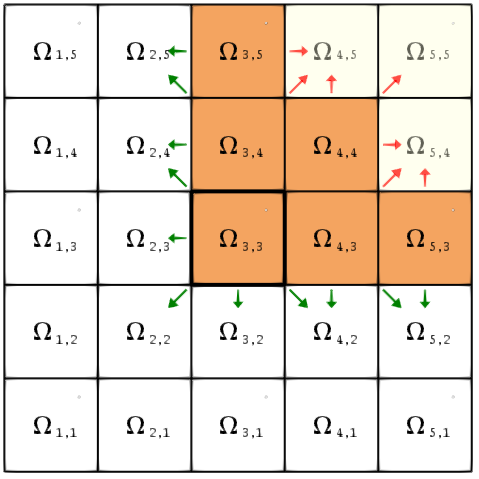}\\
		(c) First sweep: step 7 
	\end{minipage}
	
	\vspace{0.25cm}
	\begin{minipage}[t]{\wdff\linewidth}
		\centering
		\includegraphics[width=0.9\textwidth]{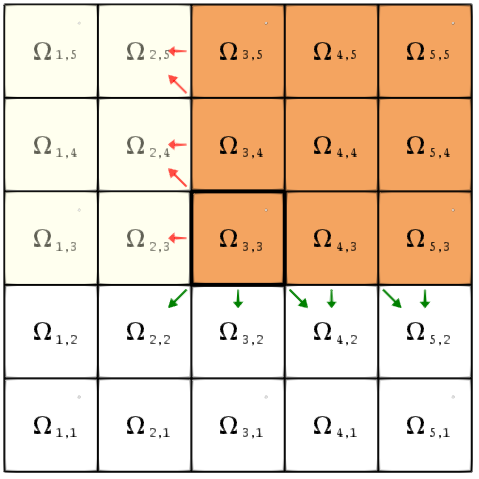}\\
		(d) After first sweep 
	\end{minipage}
	\begin{minipage}[t]{\wdff\linewidth}
		\centering
		\includegraphics[width=0.9\textwidth]{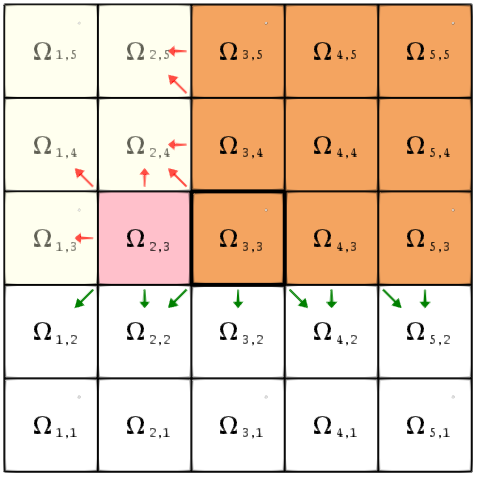}\\
		(e) Second sweep: step 6 
	\end{minipage}
	\begin{minipage}[t]{\wdff\linewidth}	
		\centering
		\includegraphics[width=0.9\textwidth]{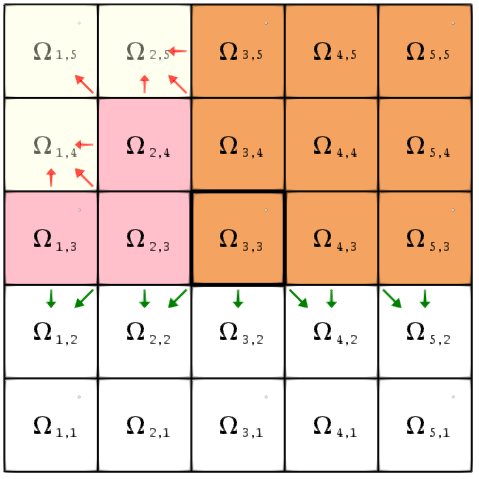}\\
		(f) Second sweep: step 7 
	\end{minipage}
	\caption{The first sweep $(+1,+1)$ and the second sweep $(-1,+1)$ in the diagonal sweeping DDM in $\R^2$. 
		The arrows denote the  transferred sources with their directions, the red ones are in the similar direction to the current sweep, 
		while the green ones are not (the green ones are excluded from being used in the current sweep due to Rule \ref{rule2d_a}).  
		 \label{fig:sweep2d_12}}	
\end{figure*}

In the {\bf second sweep}, the direction of sweep is chosen to be $(-1,+1)$, which aims at constructing the solution in the upper-left quadrant  $\Omega_{1,i_0-1;j_0,\Nby}={\Omega_{1,2;\,  3,5}}$.
Note that the $s$-th step of this sweep handles the group of subdomains $\{\Omega_{i,j}\}$ with $(\Nbx-i)+(j-1)+1=5-i+j=s$. 
 According to Rule $\ref{rule2d_a}$, among all the transferred sources left from the previous sweep (i.e., the first sweep), the ones with directions  $(-1,0)$, $(0,+1)$ and $(-1,+1)$ will  be used in this sweep, since they are in the similar direction to the current sweeping direction $(-1,+1)$. There is nothing to solve at the first 
 $(\Nbx-(i_0-1))+(j_0-1)=5$ steps of this sweep. At step 6, the subdomain problem in $\Omega_{2,3}$ is solved with the leftward transferred source from $\Omega_{3,3}$, and 5 transferred sources are generated and passed to its neighbor subdomains as  shown in Figure \ref{fig:sweep2d_12}-(e). 
At step 7, the subdomain problems on $\Omega_{1,3}$ and $\Omega_{2,4}$ are solved as shown Figure \ref{fig:sweep2d_12}-(f), and so on for the following steps, and after step 9 the solution is constructed in the upper-left quadrant  $\Omega_{1,2; 3,5}$, again leaving some transferred sources for future sweeps.
It is found that the subdomains that need  leftwards transfer solving are handled in the second sweep, while the subdomains that need  upwards transfer solving have already been handled in the first sweep.

In the {\bf third sweep}, the direction of sweep is chosen to be $(+1,-1)$, which aims at constructing the solution in the lower-right quadrant  $\Omega_{i_0,\Nbx;1,j_0-1}={\Omega_{3,5;\,  1,2}}$ as  shown in Figure \ref{fig:sweep2d_34}-(a). Note that the $s$-th step of this sweep handles the group of subdomains $\{\Omega_{i,j}\}$ with $(i-1)+(\Nby-j)+1=5+i-j=s$. 
The transferred sources from the upper-right quadrant ${\Omega_{3,5;\,  3,5}}$ are needed, 
while the transferred sources from the upper-left quadrant ${\Omega_{1,2;\,  3,5}}$ should be excluded,
 thus we need to introduce one more rule for the  source transfer in sweeps.
 Note that the new rule should not make decisions for transferred sources based on the relative position with respect to $\Omega_{3,3}$, otherwise the method is only valid for this special case of the source lying within only  $\Omega_{3,3}$.
 The second rule on the  transferred source  in sweeps  in $\R^2$ is defined as follows:
	\begin{rulee} {\rm (Opposite directions in $\R^2$)} \label{rule2d_b}
		The horizontal or vertical transferred source generated in one sweep  should not be used
		in a later sweep if these two sweeps have opposite directions.
	\end{rulee}
	
Such a rule  affects neither the transferred sources in the previous two sweeps nor the  transferred sources from the upper-right quadrant ${\Omega_{3,5;\,  3,5}}$ in the third sweep,    but effectively prevent the  transferred sources from the upper-left quadrant ${\Omega_{1,2;\,  3,5}}$  to enter the third sweep since they are generated in the second sweep, which has the opposite direction to the third sweep. There is nothing to solve at the first 
 $(i_0-1)+(\Nby-(j_0-1))=5$ steps of the third sweep.
At step 6, the subdomain problem in $\Omega_{3,2}$ is solved with the downward transferred source from $\Omega_{3,3}$, and  5 new transferred sources are generated and  passed to its neighbor subdomains as  shown Figure \ref{fig:sweep2d_34}-(b). 
 At step 7 of the third  sweep, the subdomain problems in $\Omega_{3,1}$ and $\Omega_{4,2}$ are solved as shown Figure \ref{fig:sweep2d_34}-(c), and so on for the following steps, and after step 9, the solution is constructed in the lower-right quadrant $\Omega_{3,5; 1,2}$, leaving a few transferred sources to be used in the fourth 
 sweep.
  It is found that now the subdomains that need  downwards transfer solving are handled in this sweep, and there are no more subdomains that need horizontal or vertical transfer solving.

In the {\bf  fourth sweep} (also the last), the sweep with the direction $(-1,-1)$ is performed. Note that the $s$-th step of this sweep handles the group of subdomains $\{\Omega_{i,j}\}$ with $(\Nbx-i)+(\Nby-j)+1=11-i-j=s$. 
Now all the transferred sources left from previous sweeps are in the similar direction to this sweep (none of horizontal or vertical ones are from the first sweep) as  shown in Figure \ref{fig:sweep2d_34}-(d),  thus according to  Rules \ref{rule2d_a} and \ref{rule2d_b}, all of them will be used in the last sweep. There is nothing to solve at the first  $(\Nbx-(i_0-1))+(\Nby-(j_0-1))=6$ steps of this sweep.
At step 7 of the fourth sweep, the subdomain problem in $\Omega_{2,2}$ is solved as  shown in Figure \ref{fig:sweep2d_34}-(e), and so on for the following steps. After step 9, the solution is constructed in the lower-left quadrant $\Omega_{1,i_0-1;1,j_0-1}=\Omega_{1,2; 1,2}$. Finally after such four diagonal sweeps with directions $(+1,+1)$,  $(-1,+1)$,  $(+1,-1)$ and $(-1,-1)$, the solution in the whole domain is constructed,  as shown in Figure \ref{fig:sweep2d_34}-(f).

\def\wdff{0.325}
\begin{figure*}[ht!]
	\centering
	\begin{minipage}[t]{\wdff\linewidth}
		\centering
		\includegraphics[width=0.9\textwidth]{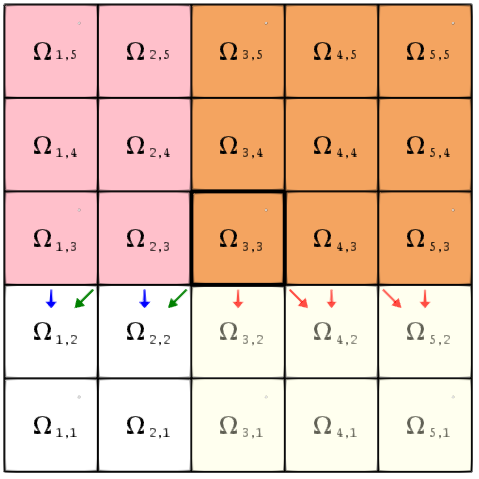}\\
		(a) Before third sweep 
	\end{minipage}
	\begin{minipage}[t]{\wdff\linewidth}
		\centering
		\includegraphics[width=0.9\textwidth]{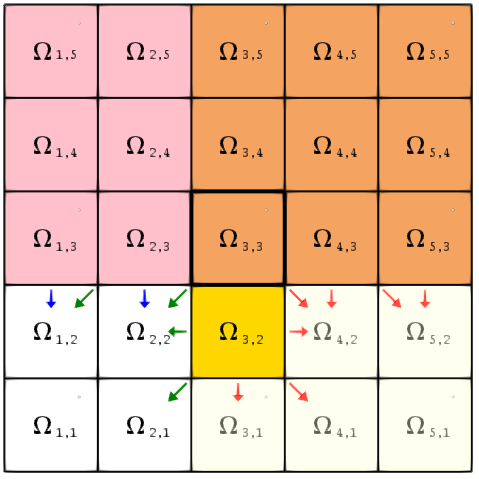}\\
		(b) Third sweep: step 6 
	\end{minipage}
	\begin{minipage}[t]{\wdff\linewidth}
		\centering
		\includegraphics[width=0.9\textwidth]{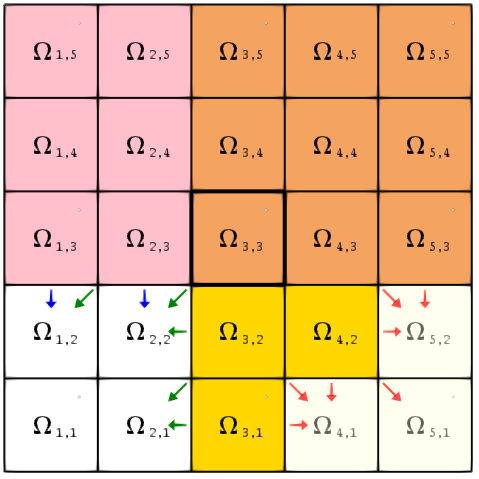}\\
		(c) Third sweep: step 7 
	\end{minipage}
	
	\vspace{0.25cm}
	\begin{minipage}[t]{\wdff\linewidth}
		\centering
		\includegraphics[width=0.9\textwidth]{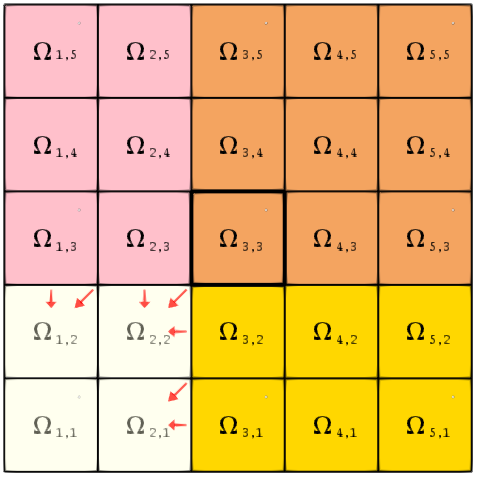}\\
		(d) After third sweep 
	\end{minipage}
	\begin{minipage}[t]{\wdff\linewidth}
		\centering
		\includegraphics[width=0.9\textwidth]{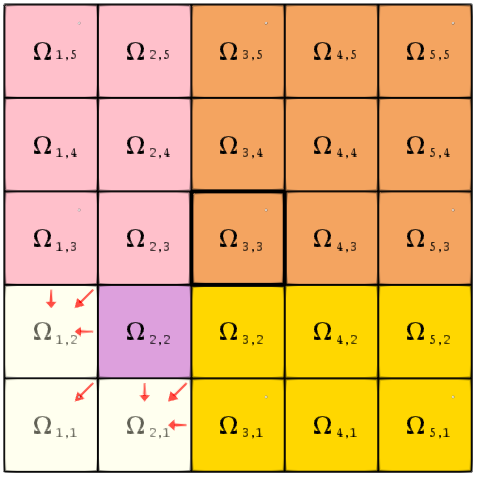}\\
		(e) Fourth sweep:  step 7 
	\end{minipage}
	\begin{minipage}[t]{\wdff\linewidth}	
		\centering
		\includegraphics[width=0.9\textwidth]{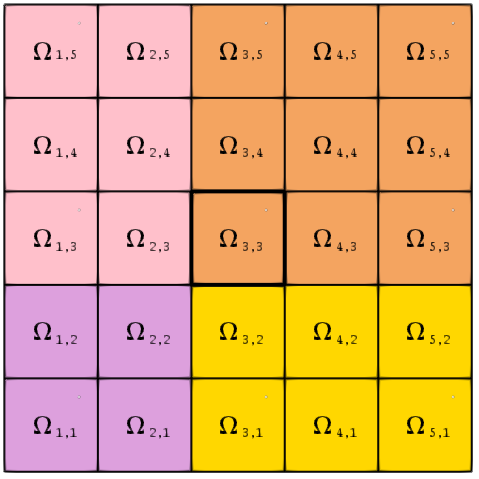}\\
		(f) After fourth sweep 
	\end{minipage}
	\caption{The third sweep $(+1,-1)$ and the fourth sweep $(-1,-1)$ in the diagonal sweep DDM in $\R^2$. 
		The arrows denote the  transferred sources with their directions, both the red and blue ones are in the similar direction to the current sweep
		(however the blues ones are excluded from being used in the current sweep due to Rule \ref{rule2d_b}), and the green ones are not (the green ones are excluded from being used in the current sweep due to Rule \ref{rule2d_a}).	
		 \label{fig:sweep2d_34} }	
\end{figure*}

By generalizing the above algorithm to $\Nbx \times \Nby$ subdomains and general source, we propose the following diagonal sweeping DDM with source transfer in $\R^2$:

\begin{algorithm_}[Diagonal sweeping DDM with source transfer in $\R^2$]~ 	
	\label{alg:diag2D}
	\begin{algorithmic}[1]
	\State Set the sweep order as $(+1,+1)$,  $(-1,+1)$,  $(+1,-1)$, $(-1,-1)$.
	\parState {Set the local subdomain sources 
		at each sweep as
		$r_{i,j}^{1} = f_{i,j}$, and $r_{i,j}^{l} = 0$, for $l=2,3,4$, $i=1,2,\ldots,\Nbx,\; j =1,2,\ldots,\Nby$.}
	\For{Sweep $l = 1,2,\ldots,4$} 
	\For{Step $s = 1, 2,\ldots, \Nbx+\Nby-1$ } 
	\For {each subdomain $\Omega_{i,j}$ in Step $s$ of Sweep $l$} 	
	\parState{
	 Solve the local solution $u_{i, j}^{l}$ with the local source $r^{l}_{i,j}$}
	\begin{equation}
	\L_{i,j} (u_{i, j}^{l}) =  r^{l}_{i,j},
	\end{equation}
	\For {each direction $(\Box,\vartriangle)$ that $\Box, \vartriangle = \pm 1,0$  and $(\Box, \vartriangle) \neq (0,0)$}
	\State Generate the new  source $\Psi_{\Box,\vartriangle; i, j} (u_{i, j}^{l})$ needed to be transferred;
	\parState{Find the smallest sweep number $l'\geq l$, such that the transferred source  $\Psi_{\Box,\vartriangle; i, j} (u_{i, j}^{l})$ could be used in Sweep $l'$, according to Rules \ref{rule2d_a} and \ref{rule2d_b};}
	\parState {Add the transferred source to the $l'$-th local source of the corresponding  neighbor subdomain}
	\begin{equation}
	r^{l'}_{i+\Box, j+\vartriangle} \mathrel{{=}}   r^{l'}_{i+\Box, j+\vartriangle}+\Psi_{\Box,\vartriangle; i, j} (u_{i, j}^{l}).
	\end{equation}
	\EndFor
	\EndFor
	\EndFor
	\EndFor
	\State The DDM solution for  $\P_{\Omega}$ with the source $f$ is then given by
	\begin{equation}\label{eq:sumdiag2D}
	 u_{\text{DDM}} = \sum\limits_{l = 1, \ldots, 4 } 
	\sum\limits_{\substack{\rangeTwoDRow}} \beta_{0,0; i,j} u_{i,j}^{l}.
	\end{equation}
    \end{algorithmic}
\end{algorithm_}
\vspace{0.2cm}

It is then easy to deduce the following result based on  similar process for the $5\times5$ partition. 
\begin{thm}{} \label{thm:sweep2d}
	The DDM solution $u_{\text{DDM}}$ produced by Algorithm \ref{alg:diag2D} is indeed the solution of the problem $P_{\Omega}$ in $\R^2$ in the constant medium case.
\end{thm}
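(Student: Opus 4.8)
The strategy is to reduce Theorem~\ref{thm:sweep2d} to the already-established correctness of the additive DDM (Algorithm~\ref{alg:add2D}) by showing that the diagonal sweeping scheme performs \emph{exactly the same set of local solves} as the additive scheme, merely in a rearranged order, and that the final summation~\eqref{eq:sumdiag2D} collects the same contributions as~\eqref{eq:sumadd2D}. By linearity of all the operators $\L_{i,j}$ and $\Psi_{\Box,\vartriangle;i,j}$, it suffices to treat the case where the source lies in a single subdomain $\Omega_{i_0,j_0}$; the general case then follows by superposition over the decomposed sources $f_{i,j}$, exactly as argued in the opening paragraph of Section~3. So first I would fix $f = f_{i_0,j_0}$ and recall from \cite{Leng2019} (quoted at the end of Section~2) that in the additive method the subdomain $\Omega_{i,j}$ does a nonzero solve only at step $s = |i-i_0| + |j-j_0| + 1$, producing there the exact restriction of $u$ to $\widetilde\Omega_{i,j}$, and that the accumulated transferred source fed into $\Omega_{i,j}$ at that step equals $\L_{i,j}$ applied to $u$ times the appropriate L-shaped cutoff.

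Next I would set up a bookkeeping correspondence between the two algorithms. For a fixed target subdomain $\Omega_{i,j}$ with $i\neq i_0$, $j\neq j_0$, the relevant displacement from $\Omega_{i_0,j_0}$ has signs $(\mathrm{sgn}(i-i_0),\mathrm{sgn}(j-j_0))\in\{\pm1\}^2$; I claim this is precisely the diagonal sweep $l$ that ``owns'' $\Omega_{i,j}$, and that within that sweep $\Omega_{i,j}$ is processed at step $s = |i-i_0|+|j-j_0|+1$ of the sweep (using the step-count formulas displayed for each of the four sweeps in the $5\times5$ illustration, specialized with $\Nbx,\Nby$ general and $i_0,j_0$ arbitrary). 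The subdomains with $i=i_0$ or $j=j_0$ split into the two sub-cases handled in the text: a row/column subdomain on the ``$+$'' side of $\Omega_{i_0,j_0}$ in one coordinate gets its nonzero solve in the first sweep with that coordinate positive; on the ``$-$'' side, in the appropriate later sweep. Here is where Rules~\ref{rule2d_a} and~\ref{rule2d_b} do the essential work: I would verify that, for each generated source $\Psi_{\Box,\vartriangle;i,j}(u_{i,j}^l)$, the smallest admissible sweep $l'$ selected in Algorithm~\ref{alg:diag2D} is exactly the sweep that ``owns'' the recipient subdomain $\Omega_{i+\Box,j+\vartriangle}$ — i.e. the sweep whose direction has the same sign pattern as the displacement of the recipient from $\Omega_{i_0,j_0}$. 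Rule~\ref{rule2d_a} guarantees a diagonal/axial source only enters a sweep pointing the same general way; Rule~\ref{rule2d_b} additionally kills an axial source generated in a sweep whose coordinate sign is opposite to where the recipient lies (this is exactly the ``upper-left quadrant should not leak into the lower-right sweep'' phenomenon described before Rule~\ref{rule2d_b}). Once this routing claim is established, each $\Omega_{i,j}$ receives in its owning sweep the very same accumulated transferred source that the additive method delivers at step $|i-i_0|+|j-j_0|+1$, hence produces the same $u_{i,j}^l$, and since the cutoffs $\beta_{0,0;i,j}$ in~\eqref{eq:sumdiag2D} are identical to those in~\eqref{eq:sumadd2D}, the two DDM solutions coincide; by the result of \cite{Leng2019} this common value is the exact solution of $\P_\Omega$.

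The main obstacle is the routing verification: one must argue carefully that the ``smallest admissible sweep'' chosen by the greedy rule in Algorithm~\ref{alg:diag2D} never lands \emph{earlier} than the sweep that owns the recipient (so that the recipient's local source is not yet fully assembled when it is solved) and never \emph{later} (so that no contribution is orphaned). Non-orphaning amounts to checking that Rules~\ref{rule2d_a}–\ref{rule2d_b} together admit, in the owning sweep, every transfer whose recipient lies in that sweep's quadrant — in particular the two ``merged'' axial transfers (the upward transfers along column $i_0$ and the rightward transfers along row $i_0$, which in the L-sweeps method would belong to separate axial sweeps) must be admissible in the first sweep, and symmetrically for the leftward/downward transfers in the second and third sweeps; this is precisely the observation made in the running $5\times5$ example and needs to be stated as a short case analysis over the nine direction-pairs $(\Box,\vartriangle)$ crossed with the four sweeps. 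The ``not earlier'' direction is the genuinely delicate point, because a corner source $\Psi_{+1,+1;i,j}$ generated in, say, the second sweep must wait for the fourth sweep if its recipient lies in the lower-left quadrant — so one must confirm that Rule~\ref{rule2d_b}'s restriction to \emph{axial} sources, combined with the sweep ordering $(+1,+1),(-1,+1),(+1,-1),(-1,-1)$, still forces every diagonal source onto a sweep no earlier than its recipient's owning sweep. I would organize this as a lemma: ``a transferred source with recipient $\Omega_{i',j'}$ is first admissible exactly in the sweep owning $\Omega_{i',j'}$,'' prove it by the case analysis just sketched, and then the theorem follows immediately.
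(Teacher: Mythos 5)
Your overall strategy---reduce to a single-subdomain source by linearity, quote the diagonal-marching property of the additive DDM from \cite{Leng2019}, and then show that Rules \ref{rule2d_a}--\ref{rule2d_b} route every transferred source so that each subdomain is solved, in its owning sweep, with exactly the accumulated local source it would receive in Algorithm \ref{alg:add2D}---is the same argument the paper makes, only stated as a formal reduction where the paper gives a $5\times 5$ walkthrough and declares the general case ``easy to deduce.'' The quadrant/ownership bookkeeping and the identification of Rule \ref{rule2d_b}'s role are correct.

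However, the key lemma as you state it --- ``a transferred source with recipient $\Omega_{i',j'}$ is first admissible exactly in the sweep owning $\Omega_{i',j'}$'' --- is false, and the case analysis you propose (nine direction pairs $(\Box,\vartriangle)$ crossed with four sweeps) cannot prove it, because admissibility also depends on which subdomain generated the source. Concretely, take $i_0=j_0=3$ and the corner-transferred solve of $\Omega_{4,4}$ in the first sweep: the generated source $\Psi_{-1,0;\,4,4}(u^1_{4,4})$ has recipient $\Omega_{3,4}$, which is owned by the first sweep, yet $(-1,0)$ is not in the similar direction of $(+1,+1)$, so Rule \ref{rule2d_a} defers it to the second sweep --- too late. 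Likewise $\Psi_{-1,-1;\,4,3}(u^1_{4,3})$ has recipient $\Omega_{3,2}$, owned by the third sweep, but is first admissible only in the fourth. The resolution is that every such ``backward'' source vanishes identically: by the second conclusion of Lemma \ref{lemma:src_trans} ($u_2=0$ in $\Omega_1$), each local solution produced by a horizontal/vertical (resp.\ corner) transfer is supported in the closed half-plane (resp.\ quarter-plane) on the far side of the transfer interface, so $\beta_{\Box,\vartriangle;\,i,j}\,u^l_{i,j}$ vanishes on the support of $\chi_{\Box,\vartriangle;\,i,j}$ whenever $(\Box,\vartriangle)$ points back toward $\Omega_{i_0,j_0}$ in some coordinate. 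This is exactly why the paper's figures show only $5$ (resp.\ $3$) outgoing arrows from axially (resp.\ corner) transferred subdomains rather than $8$. Your routing lemma must therefore be restricted to the nonzero transferred sources, and its proof must track the position of the \emph{generating} subdomain relative to $\Omega_{i_0,j_0}$ and invoke this support property; with that amendment (and after fixing the harmless off-by-a-constant in your within-sweep step count, since the origin subdomain sits at step $i_0+j_0-1$ of the first sweep, not step $1$), the argument closes as you describe.
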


\section{The diagonal sweeping DDM with source transfer in $\R^3$}

The diagonal sweeping DDM  in $\R^2$  (Algorithm \ref{alg:diag2D}) can be further extended to $\R^3$ based on the additive overlapping DDM (Algorithm \ref{alg:add3D}) in $\R^3$. 
 There are totally $2^3=8$ diagonal directions in $\R^3$: $(+1,+1,+1)$, $(-1,+1,+1)$, $(+1,-1,+1)$, $(-1,-1,+1)$, $(+1,+1,-1)$, $(-1,+1,-1)$, $(+1,-1,-1)$, $(-1,-1,-1)$, 
 and the sweep along each of the directions contains a total of $\Nbx+\Nby+\Nbz-2$ steps.

\subsection{Sweeping  orders, source transfer rules and sweeping algorithm}
We choose to use the following sweeping order for our diagonal sweeping DDM in this paper,
which could be viewed as the two-dimensional sweeping order with first the positive $z$ direction and then the negative one:
 \begin{align} \label{Sorder3D}
 \begin{array}{llll}
 (+1,+1,+1),& (-1,+1,+1),& (+1,-1,+1),& (-1,-1,+1), \\
 (+1,+1,-1),& (-1,+1,-1),& (+1,-1,-1),& (-1,-1,-1).
 \end{array}
 \end{align}
Other sweeping order also exists, such as 
\begin{align*}
\begin{array}{llll}
(+1,+1,+1),& (-1,+1,+1),& (+1,-1,+1),& (+1,+1,-1),\\
(-1,-1,+1),& (-1,+1,-1),& (+1,-1,-1),& (-1,-1,-1),
\end{array}
\end{align*}
where the $L_1$ distance between the  successive sweeping directions and  the first one is monotonically increasing. 

Let us first define  the similar direction  in $\R^3$. Two vector $\d_1$ and $\d_2$ in $\R^3$ are called in the {\sl similar
direction} if $\d_1 \cdot \d_2 > 0$ and $\d_1(k) \, \d_2(k) \geq 0$ for
$k = 1,2,3$, where $\d_1(k)$ and $\d_2(k)$ are the $k$-th components of
$\d_1$ and $\d_2$, respectively. 
Then the first rule on the transferred source in sweeps in $\R^3$ (in correspondence to Rule \ref{rule2d_a} in $\R^2$)  is defined below:
\begin{rulee}{\rm (Similar directions in $\R^3$)} \label{rule3d_a}
A transferred source  which is not in the similar direction of one sweep in $\R^3$ should not be used in that sweep.
\end{rulee}

 Note that by projection onto two-dimensional planes,  the three-dimensional construction of the solution becomes the two-dimensional quadrant-wise construction of the solution, thus we follow  Rule \ref{rule2d_b} for $\R^2$, and define the second rule on the transferred source in sweeps in $\R^3$ as follows:
\begin{rulee}{\rm (Opposite directions in $\R^3$)} 	\label{rule3d_b}
 Suppose a transferred source with direction $\d_{\text{src}}$ is 
 generated in one sweep with direction $\d_1$, then it should not be used in the later sweep with direction $\d_2$, if under any of $x-y$, $x-z$,
  $y-z$ plane projections, the projection of $\d_{\text{src}}$ has exactly one zero component and the projections of $\d_1$ and $\d_2$ are opposite.
 \end{rulee}

Now we propose the diagonal sweeping DDM with source transfer in $\R^3$ in the following:

\begin{algorithm_}[Diagonal sweeping DDM with source transfer in $\R^3$]
    $\quad$  
	\label{alg:diag3D}
	\begin{algorithmic}[1]
		\parState {Set the sweep order as list (\ref{Sorder3D}) } 
		\parState {Set the local subdomain sources for each sweep as
				$r_{i,j,k}^{1} = f_{i,j,k}$, and $r_{i,j,k}^{l} = 0$, for $l=2,3,\ldots,8$, $i=1,2,\ldots,\Nbx,\; j =1,2,\ldots,\Nby, \; k =1,2,\ldots,\Nbz$.}
		\For{Sweep $l = 1,\ldots,8$} 
		\For{Step $s = 1, \ldots, \Nbx+\Nby+\Nbz-2$ } 
		\For {subdomain $\Omega_{i,j,k}$ in Step $s$ of the current sweep} 	
		\parState{
			Solve the local solution $u_{i, j, k}^{l}$ with the local source of current sweep}
		\begin{eqnarray}
		\L_{i,j,k} (u_{i, j, k}^{l}) =  r^{l}_{i,j, k},
		\end{eqnarray} 
		\For {each direction $(\Box,\vartriangle,\ocircle)$ that $\Box, \vartriangle, \ocircle = \pm 1,0$  and $(\Box, \vartriangle, \ocircle) \neq (0,0,0)$ }
		\State Compute new transferred source $\Psi_{\Box,\vartriangle,\ocircle; i, j,k} (u_{i, j, k}^{l})$;
		\parState{Find the smallest sweep number $l'\geq l$, such that the transferred source  $\Psi_{\Box,\vartriangle, \ocircle; i, j, k} (u_{i, j, k}^{l})$ could be used in Sweep $l'$, according to Rules \ref{rule3d_a} and \ref{rule3d_b};} 
		\parState {Add the transferred source to the $l'$-th local source of the corresponding  neighbor subdomain}
		\begin{eqnarray}
		r^{l'}_{i+\Box, j+\vartriangle, k+\ocircle} \mathrel{{=}}  r^{l'}_{i+\Box, j+\vartriangle, k+\ocircle} +\Psi_{\Box,\vartriangle,\ocircle; i, j, k} (u_{i, j, k}^{l}). 
		\end{eqnarray}
		\EndFor
		\EndFor
		\EndFor
		\EndFor
		\State The DDM solution for  $\P_{\Omega}$ with the source $f$ is then given by
		\begin{equation}\label{eq:sumdiag3D}
		 u_{\text{DDM}} = \sum\limits_{l = 1, \ldots, 8} 
		\sum\limits_{\substack{\rangeThreeDRow}} \beta_{0,0,0; i,j,k}  u_{i,j,k}^{l} .
		\end{equation}
	\end{algorithmic}
\end{algorithm_}
\vspace{0.2cm}

 \subsection{Verification of the DDM solution}
 
Next we verify that the DDM solution $u_{\text{DDM}}$ produced by Algorithm \ref{alg:diag3D} is indeed the solution to the problem $P_{\Omega}$ in $\R^3$ in the constant medium case. Again the case of the source lying within only one subdomain is verified, for instance, $\supp f \subset \Omega_{i_0,j_0,k_0}$, then the case of general source follows if the solving process does not depend on such specific  subdomain. Let us call $\Omega_{i_0,j_0,k_0}$ the origin subdomain.

\def\wdff{0.325}
\begin{figure*}[!ht]	
	\centering
	\begin{minipage}[t]{\wdff\linewidth}
		\centering
		\includegraphics[width=0.9\textwidth]{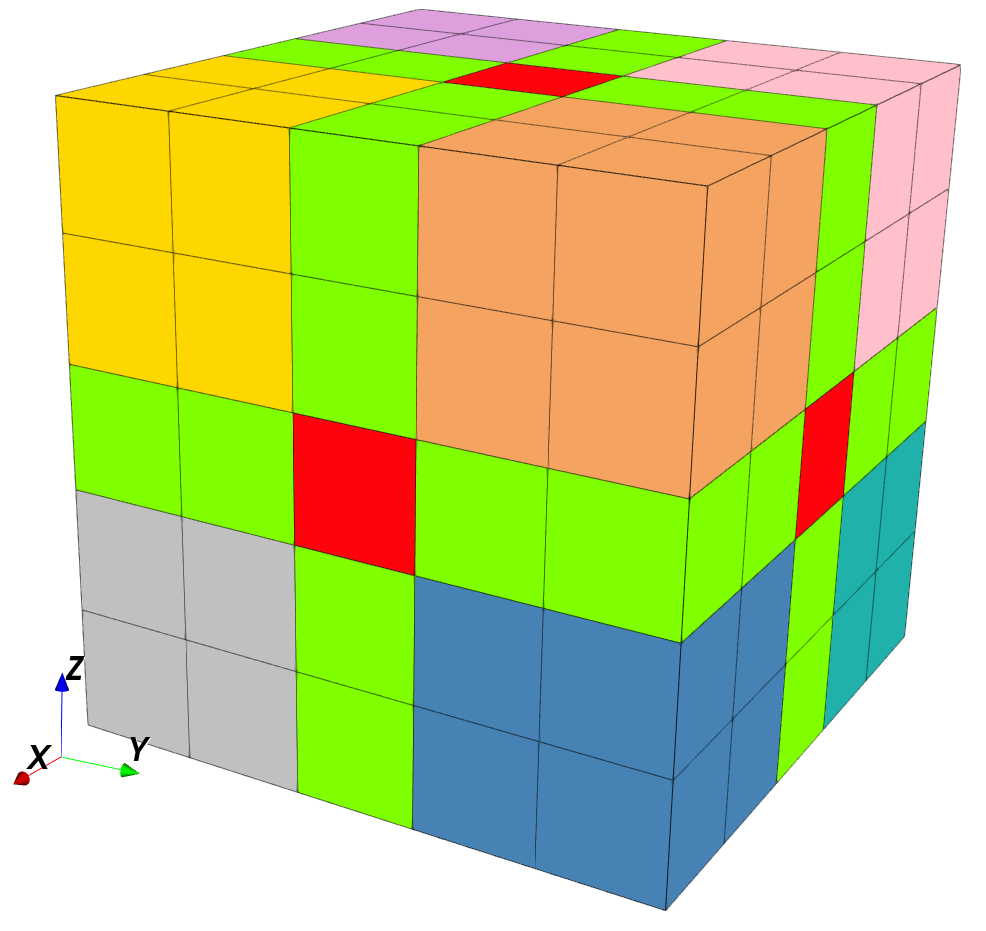}\\
		(a) All 27 regions
	\end{minipage}
	\begin{minipage}[t]{\wdff\linewidth}
		\centering
		\includegraphics[width=0.9\textwidth]{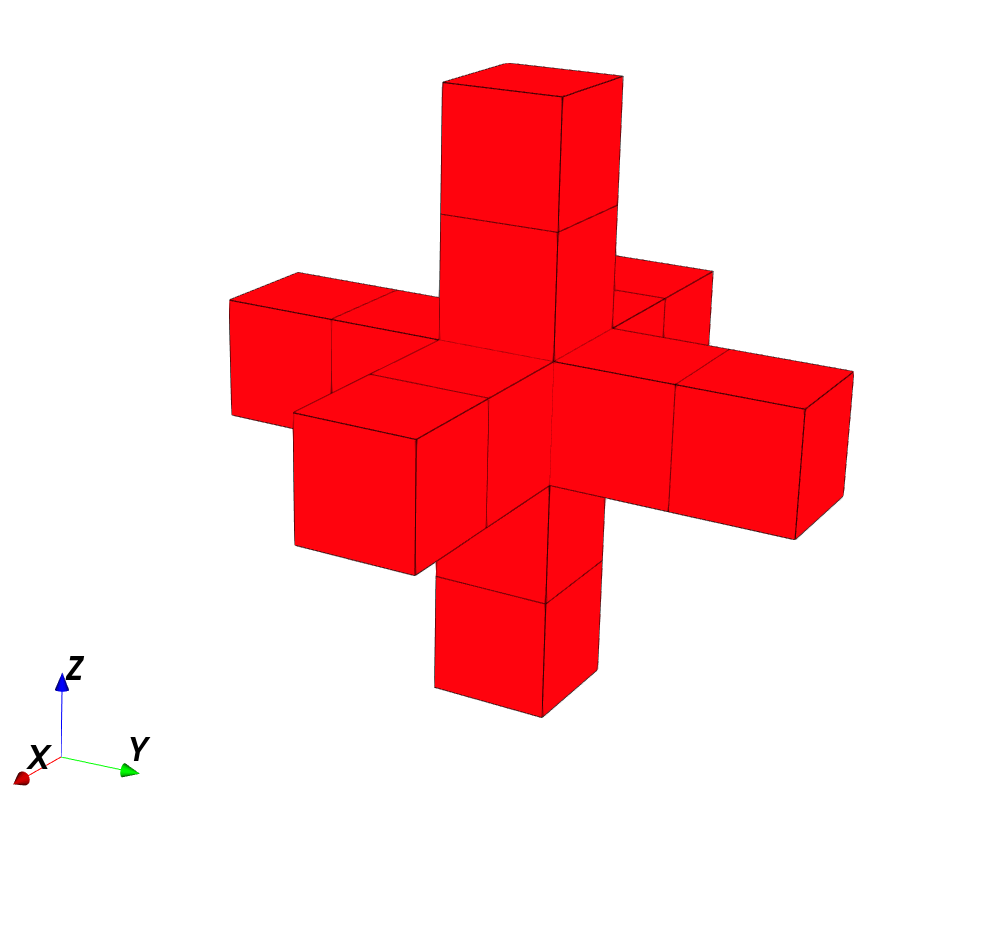}\\
		(b) 6 axial regions
	\end{minipage}
	\begin{minipage}[t]{\wdff\linewidth}
		\centering
		\includegraphics[width=0.9\textwidth]{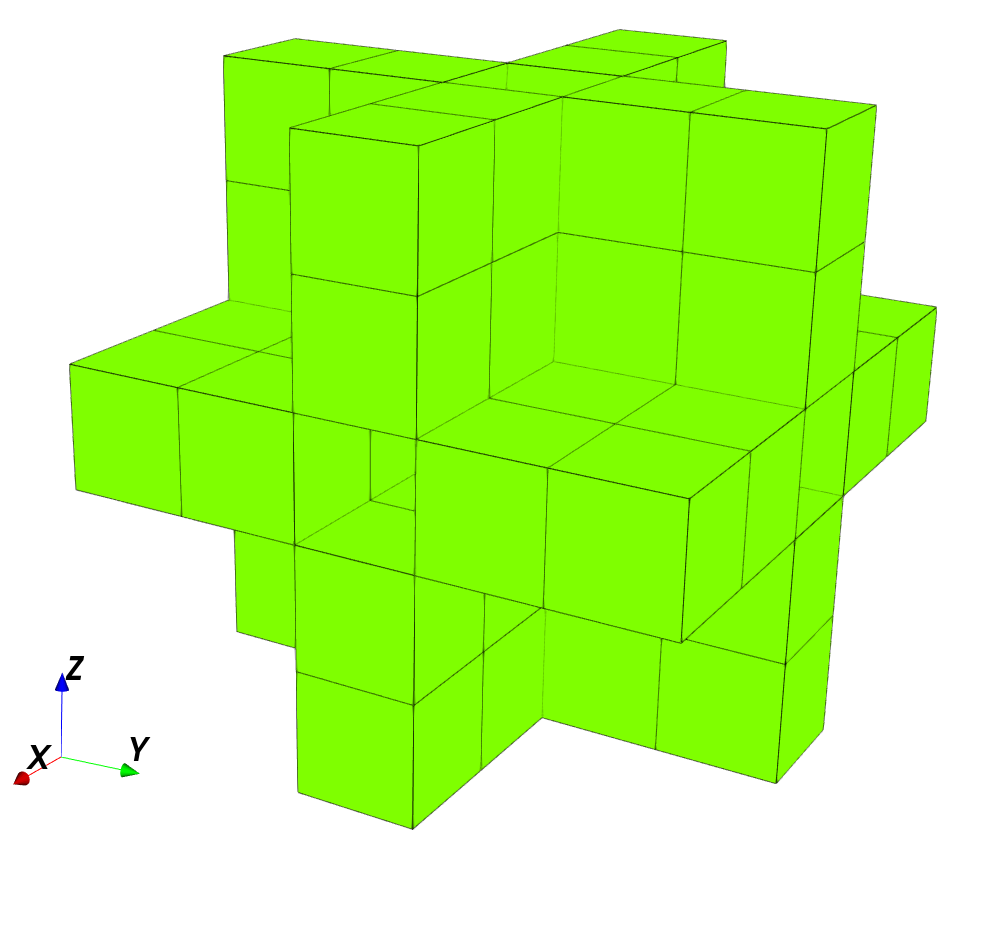}\\
		(c) 12 planar regions
	\end{minipage}
	
	\vspace{0.3cm}
	\begin{minipage}[t]{\wdff\linewidth}
		\centering
		\includegraphics[width=0.9\textwidth]{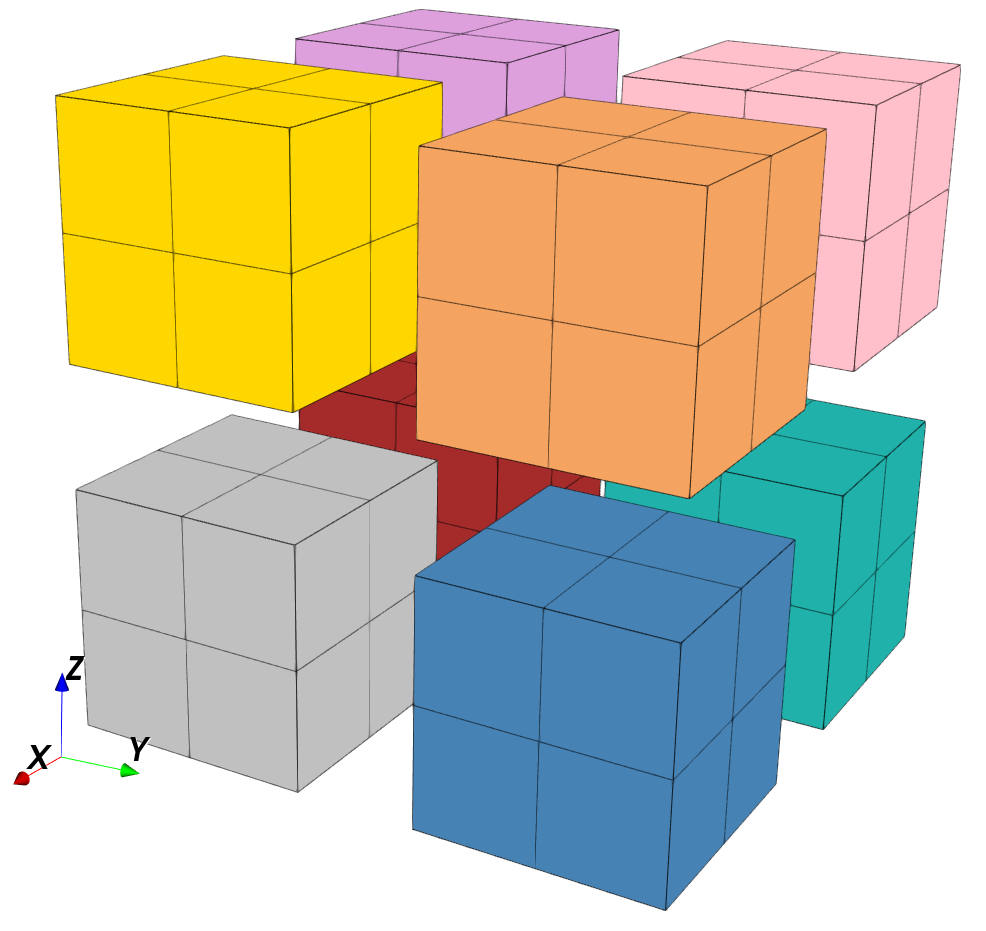}\\
		(d) 8 octantal regions
	\end{minipage}
	\begin{minipage}[t]{\wdff\linewidth}
		\centering
		\includegraphics[width=0.9\textwidth]{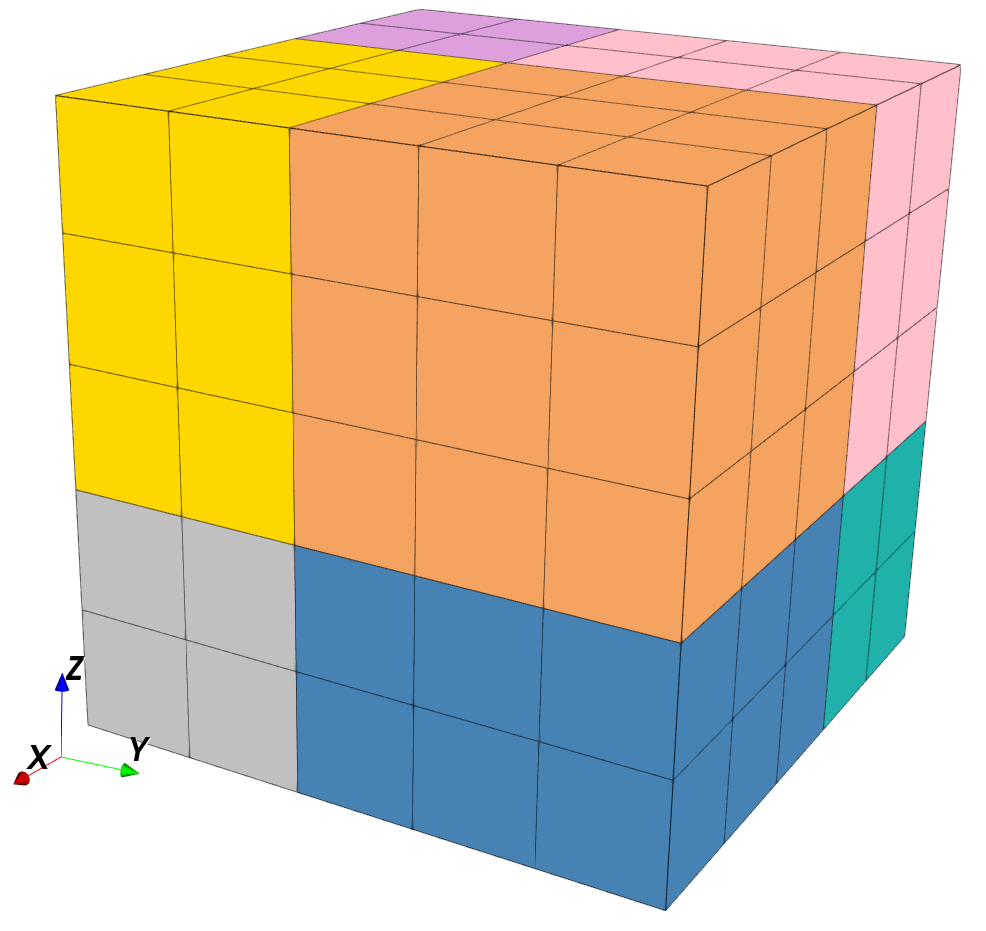}\\
		(e) 8 extended octantal regions
	\end{minipage}
	\caption{Divided regions based on the relative position to the particular subdomain $\Omega_{i_0,j_0,k_0}$.  \label{fig:oct_div}}
\end{figure*}

In each sweep of the sweeping diagonal DDM, the local solution of some subdomains in certain region is to be constructed and we describe these regions  in the following. 
The whole domain is split into $3^3=27$ regions
based on the relative position  to the specific subdomain $\Omega_{i_0, j_0, k_0}$, 
as  shown in Figure \ref{fig:oct_div}-(a),
we denote them by $\Omega^{(\dirThreeD)}$, $\dirThreeD = \pm 1, 0$,  
\begin{align}
\Omega^{(\dirThreeD)} = \bigcup\limits_{\substack{i \in I_{\Box}(i_0),\, 
		j \in I_{\vartriangle}(j_0),\, k \in I_{\ocircle}(k_0) \\ \rangeThreeDCol  } } \Omega_{i,j,k} 
\end{align}
where $I_{s}(a)$ is a set with $I_{1}(a)=\{a+1,a+2,\ldots,+\infty\}$, 
$I_{0}(a)=\{a\}$, 
and $I_{-1}(a)=\{-\infty, \dots, a-2,a-1\}$.
These regions could be divided into four types: 
\begin{itemize}
	\item the origin one $(\dirThreeD)=(0,0,0)$,
	which contains the source;
	\item 6 axial ones with exactly two zeros in $(\dirThreeD)$ as shown in Figure \ref{fig:oct_div}-(b),
	in which the subdomains are solved with $x$, $y$ or $z$ directional source transfers;
	\item 12  planar ones with exactly one zeros in $(\dirThreeD)$ as shown in Figure \ref{fig:oct_div}-(c),
	in which the subdomains are solved with $x$-$y$, $y$-$z$ or $x$-$z$ directional source transfers;
	\item 8 octantal ones with no zeros in $(\dirThreeD)$ as shown in Figure \ref{fig:oct_div}-(d), 
	in which the subdomains are solved with $x$-$y$-$z$ directional source transfers.
\end{itemize}

The L-sweeps method \cite{Zepeda2019} constructs the solution in each of the 27 regions separately with 26 sweeps, however, our diagonal sweep DDM merges the origin, axial and  planar regions  into the octant regions, and constructs the solution in 8 extended octantal regions (shown in Figure \ref{fig:oct_div}-(e)) with 8 sweeps. 
Specifically, we denote the 8 extended octantal regions  by $\widetilde{\Omega}^{(\dirThreeD)} $, $\dirThreeD = \pm 1$ with $(\dirThreeD)$ being referred as the direction of the octants, and  we have
\begin{align} \label{octa3D}
\begin{array}{ll}
\widetilde{\Omega}^{(+1,+1,+1)} =  \Omega_{i_0,\Nbx;j_0,\Nby;k_0,\Nbz},\quad &\widetilde{\Omega}^{(-1,+1,+1)} =  \Omega_{1,i_0-1;j_0,\Nby;k_0,\Nbz}, \\
\widetilde{\Omega}^{(+1,-1,+1)} =  \Omega_{i_0,\Nbx;1,j_0-1;k_0,\Nbz},\quad &\widetilde{\Omega}^{(-1,-1,+1)} =  \Omega_{1,i_0-1;1,j_0-1;k_0,\Nbz}, \\
\widetilde{\Omega}^{(+1,+1,-1)} =  \Omega_{i_0,\Nbx;j_0,\Nby;1,k_0-1},\quad &\widetilde{\Omega}^{(-1,+1,-1)} =  \Omega_{1,i_0-1;j_0,\Nby;1,k_0-1}, \\
\widetilde{\Omega}^{(+1,-1,-1)} =  \Omega_{i_0,\Nbx;1,j_0-1;1,k_0-1},\quad &\widetilde{\Omega}^{(-1,-1,-1)} =  \Omega_{1,i_0-1;1,j_0-1;1,k_0-1}.
\end{array}
\end{align}
The extended octantal regions will be referred as the octants for short in the remaining part of the paper.
Each octant $\widetilde{\Omega}^{\dirThreeD}$ is to be solved in the sweep along the direction $(\dirThreeD)$.

The definition of  neighbor octants is introduced as follows.
The distance of two octants is measured by the half of $L_1$ distance of their directions, thus any octant has three distance-1  neighbor octants (or  face  neighbor octants),  three distance-2  neighbor octants (or edge  neighbor octants), and one distance-3  neighbor octant (or the opposite octant).
Before solving an octant in the current sweep, 
some other octants may have already been solved in the previous sweeps, then the octant to be solved in the current sweep may have zero, one, two or three solved face  neighbor octants, 
these are the four cases that we will encounter a few times in the solving process,
as shown in Figure \ref{fig:oct_solve_type}-(a) to (c).

\def\wdff{0.325}
\begin{figure*}[!ht]	
	\centering
	\begin{minipage}[t]{\wdff\linewidth}
		\centering
		\includegraphics[width=0.9\textwidth]{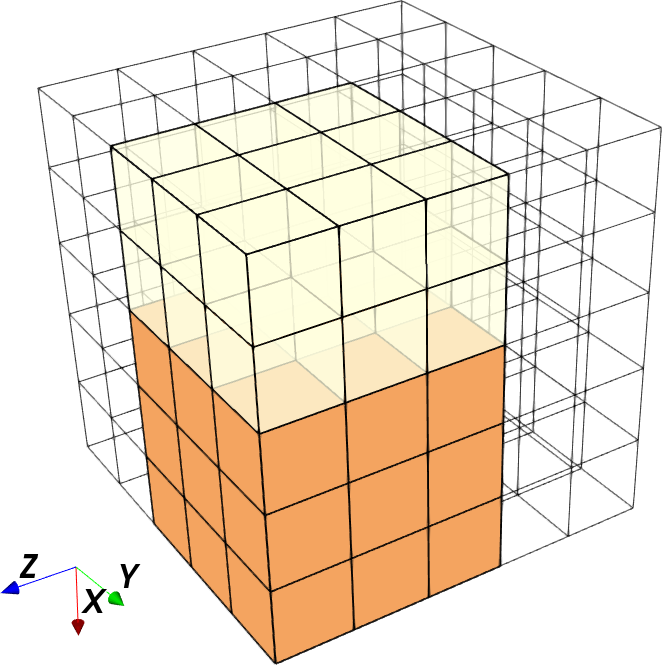}\\
		(a) One solved face neighbor octant
	\end{minipage}
	\begin{minipage}[t]{\wdff\linewidth}
		\centering
		\includegraphics[width=0.9\textwidth]{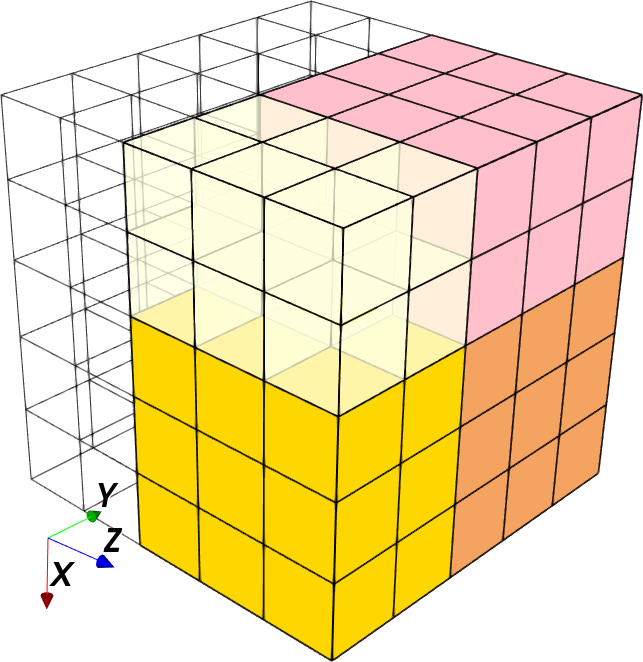}\\
		(b) Two solved face neighbor octants
	\end{minipage}
	\begin{minipage}[t]{\wdff\linewidth}
		\centering
		\includegraphics[width=0.9\textwidth]{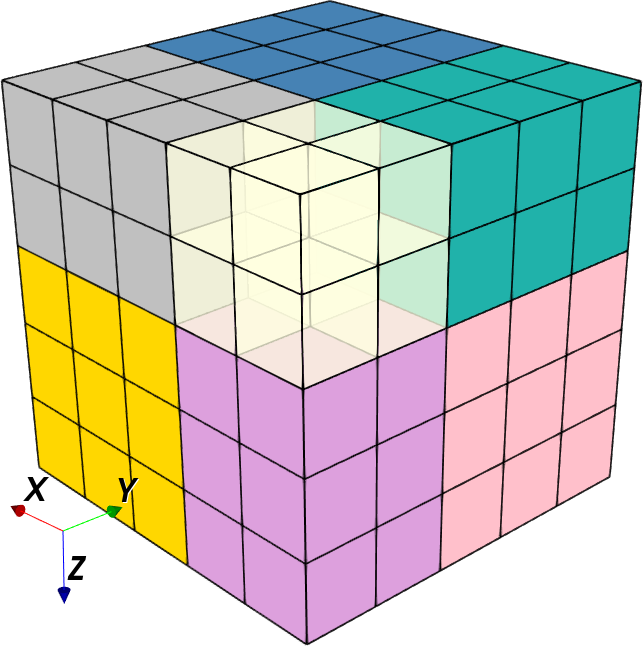}\\
		(c) Three solved face neighbor octants
	\end{minipage}
	\caption{
		The light yellow transparent region is the octant to be solved, and the other colored ones are the already solved octants. \label{fig:oct_solve_type}}\vspace{0.25cm}
\end{figure*}

 For all the eight diagonal sweeps in $\R^3$, the choice of transferred sources to be used in each sweep become the key problem and is quite complicated, thus we first discuss some basic properties of the octant-wise solving, and then develop  some useful tools for the verification using  Rules \ref{rule3d_a} and \ref{rule3d_b} on source transfer.
 To better describe the unused transferred sources generated from an octant solving in the corresponding sweep, we categorize them by the faces, edges, and vertices of the octant as follows.
The  unused transferred sources associated with a face of the octant are defined as the unused transferred sources that are generated by the boundary subdomains of the octant and have the similar direction to the octant face, see Figure \ref{fig:tsrc_type}-(a) for an illustration.
The  unused transferred sources associated with an edge of the octant  are defined as the intersection of the unused transferred sources associated with the two faces sharing the  edge, see Figure \ref{fig:tsrc_type}-(b).
The  unused transferred sources associated with a vertex of an octant are defined as the intersection of the unused transferred sources associated with all three faces,  see Figure \ref{fig:tsrc_type}-(c).

\def\wdff{0.325}
\begin{figure*}[!ht]	
	\centering
	\begin{minipage}[t]{\wdff\linewidth}
		\centering
		\includegraphics[width=0.9\textwidth]{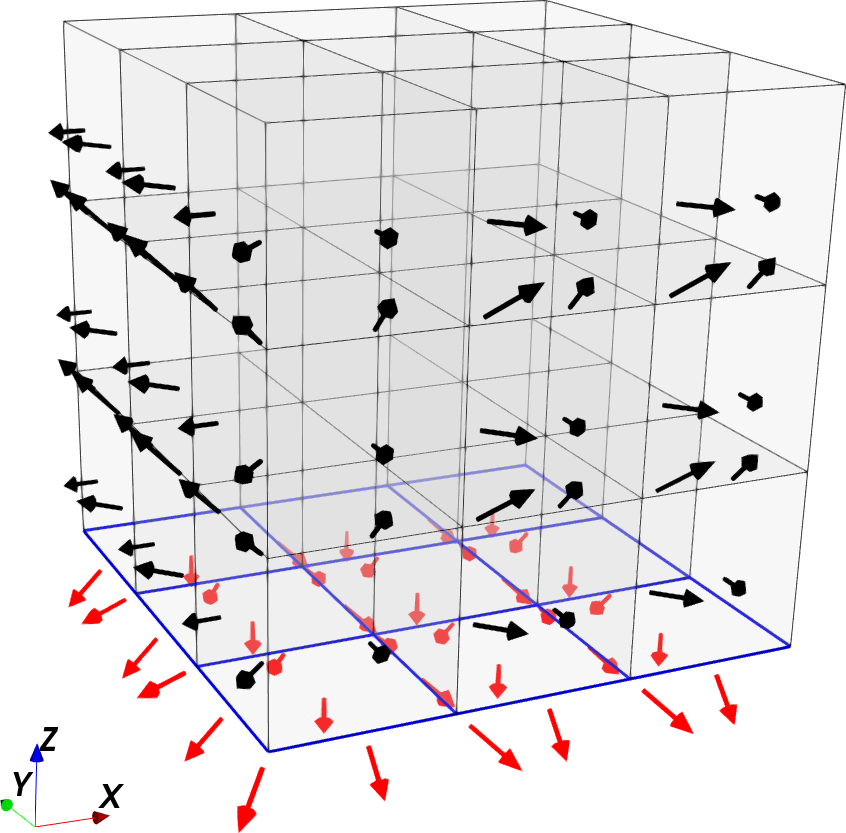}\\
		(a) 
	\end{minipage}
	\begin{minipage}[t]{\wdff\linewidth}
		\centering
		\includegraphics[width=0.9\textwidth]{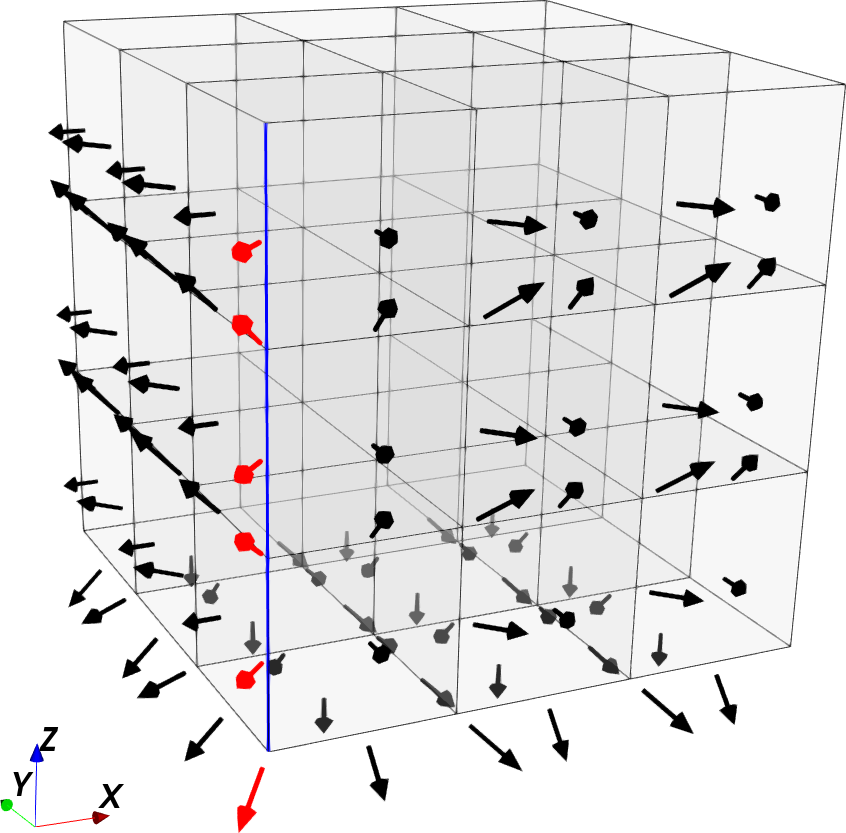}\\
		(b) 
	\end{minipage}
	\begin{minipage}[t]{\wdff\linewidth}
		\centering
		\includegraphics[width=0.9\textwidth]{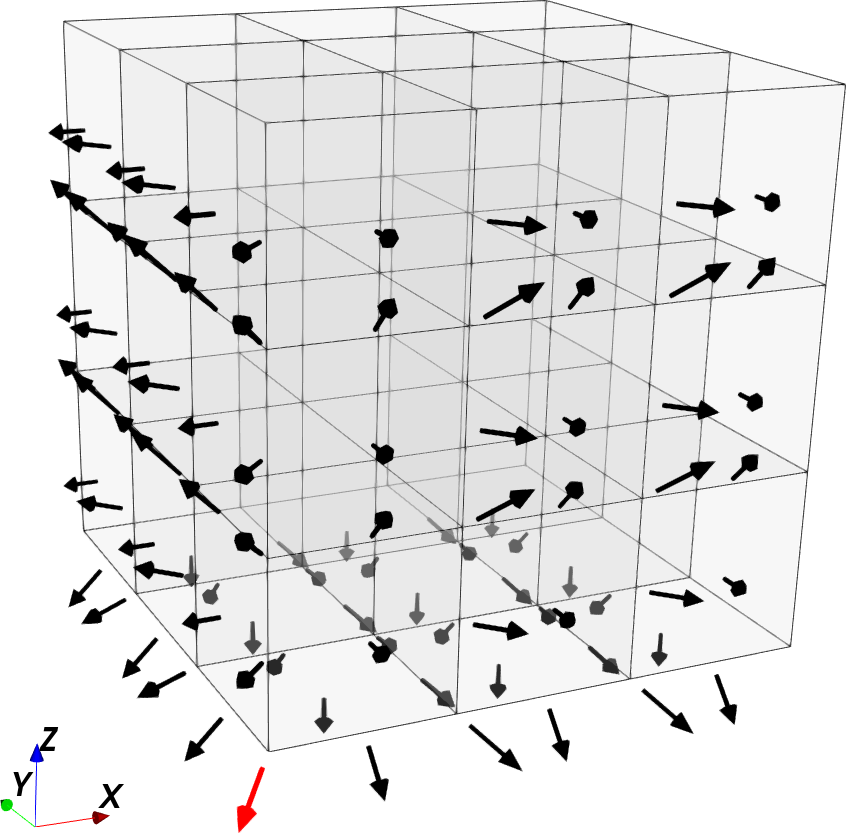}\\
		(c) 
	\end{minipage}
	\caption{Illustration of the unused transferred sources associated with a face (a), or an edge (b), or a vertex (c) of one octant, which is denoted by red arrows.
	The face and edge are marked with blue lines.   \label{fig:tsrc_type}}
\end{figure*}

It is obvious that  the unused transferred sources needed to solve an octant in the corresponding sweep must be  in the similar direction to the octant, and we will refer  the unused transferred sources in the similar direction to the octant as the \textit{candidate} transferred sources for the octant to be solved. Note that  not all the candidate  transferred sources are needed to  solve an octant,  the following result holds.

\begin{lemmaa} \label{lemma:candidate}
	Suppose that an octant is to be solved in the corresponding sweep, then all the candidate transferred sources it needs are those 
	associated with the sharing faces of the solved face  neighbor octants,
	or associated with the sharing edges of the solved edge  neighbor octants,
	or associated with the sharing vertices of the solved opposite octants.
\end{lemmaa}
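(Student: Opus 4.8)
The plan is to reduce to the single‑source case and then read the diagonal sweeping process as a re‑ordering of the additive overlapping DDM (Algorithm \ref{alg:add3D}), whose correctness in the constant medium case is already established in \cite{Leng2019}. As in the text it suffices to take $\supp f\subset\Omega_{i_0,j_0,k_0}$, the origin subdomain. The extended octant $\widetilde{\Omega}^{(\d)}$ is then exactly the set of subdomains $\Omega_{i,j,k}$ whose ``away‑from‑origin'' sign pattern (the signs of the nonzero components of $(i-i_0,\,j-j_0,\,k-k_0)$) is first matched, among the eight directions in \eqref{Sorder3D}, by the sweep direction $\d$; on the remaining components $\d$ carries $+1$. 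One checks directly that this description agrees with \eqref{octa3D} and that $\widetilde{\Omega}^{(\d)}$ is precisely the set of subdomains whose local solves are produced during the sweep along $\d$. I would then run an induction over the eight sweeps in the order \eqref{Sorder3D}, so that when $\widetilde{\Omega}^{(\d)}$ is reached, every octant preceding $\d$ in the order has already been solved.

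The two facts imported from \cite{Leng2019} for the inductive step are: (i) in the additive DDM the only possibly nonzero transferred sources entering the single local solve of $\Omega_{i,j,k}$ are the $\Psi_{\Box,\vartriangle,\ocircle;\,\cdot}(\cdot)$ produced by the at most $2^{3}-1=7$ neighbours obtained by stepping one index toward the origin along each away‑from‑origin coordinate; and (ii) the vanishing property inherited from Lemma \ref{lemma:src_trans} (the ``$u_2=0$ in $\Omega_1$'' part): a transferred source $\Psi_{\Box,\vartriangle,\ocircle;\,p,q,r}(v)$ built from a local solution $v$ is supported only on the far side of the transfer interfaces, so in the constant medium case (where, owing to the shifted PML profile, every subdomain operator agrees with $\Delta+\kappa^{2}$ on the overlap regions) any transferred source whose direction does not point away from the origin relative to its generating subdomain is identically zero. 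Now fix $\d$ and a boundary subdomain $\Omega$ of $\widetilde{\Omega}^{(\d)}$ whose additive‑DDM local source uses a nonzero transfer from a neighbour $\Omega'$ lying \emph{outside} $\widetilde{\Omega}^{(\d)}$. A sign‑pattern computation shows that the octant of $\Omega'$ is a direction $\d'$ obtained from $\d$ by flipping a nonempty set of $-1$ components to $+1$; hence $\d'$ is a face‑, edge‑, or opposite‑neighbour octant of $\d$ according as one, two, or three components flip, and since flipping $-1\mapsto+1$ strictly lowers the position index in \eqref{Sorder3D}, $\d'$ precedes $\d$ and $\widetilde{\Omega}^{(\d')}$ is already solved. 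On the flipped coordinates $\Omega'$ sits at the origin level, i.e.\ on the face/edge/vertex of $\widetilde{\Omega}^{(\d')}$ it shares with $\widetilde{\Omega}^{(\d)}$, and the transfer direction from $\Omega'$ to $\Omega$ equals $-1$ on precisely those coordinates; thus this transferred source is one of those \emph{associated with that shared face, edge, or vertex}. This proves that every candidate source actually needed is of the claimed form.

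For the reverse inclusion I would start from a transferred source associated with the shared face (resp.\ edge, resp.\ vertex) of a solved face (resp.\ edge, resp.\ opposite) neighbour $\widetilde{\Omega}^{(\d')}$ and verify that Algorithm \ref{alg:diag3D} routes it into the sweep along $\d$. By construction its direction has component $-1$ on the flipped coordinates, so it is never in the similar direction to $\d'$ and therefore survives the sweep along $\d'$ unused, while on the remaining coordinates its component is $0$ or agrees with $\d$, so it is in the similar direction to $\d$ and passes Rule \ref{rule3d_a}; one then checks case by case that Rule \ref{rule3d_b} never discards it, because under each of the three plane projections either the projections of $\d$ and $\d'$ are not opposite or the projection of the source does not have exactly one zero component. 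Hence the ``smallest $l'\ge l$'' routing in Algorithm \ref{alg:diag3D} places it into the local source of the appropriate boundary subdomain of $\widetilde{\Omega}^{(\d)}$ during the sweep along $\d$. Combined with the vanishing property (ii), which kills every candidate source \emph{not} of this form (such a source would have to arrive from an earlier octant via a direction not pointing away from the origin relative to its generating subdomain), this identifies the needed candidate sources as exactly the listed ones; non‑degeneracy of the listed ones, hence that none may be omitted in reconstructing $u$ on $\widetilde{\Omega}^{(\d)}$, follows again from the constant‑medium analysis of \cite{Leng2019}.

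I expect the main obstacle to be the bookkeeping in the last two paragraphs: coupling the octant‑neighbour geometry of the \emph{extended} octantal regions \eqref{octa3D}, which absorb the origin, axial and planar sub‑regions (so a boundary subdomain of one octant may physically lie in an axial or planar piece absorbed into a different octant), with the precise statements of Rules \ref{rule3d_a}--\ref{rule3d_b} and the position ordering \eqref{Sorder3D}; in particular one must verify that Rule \ref{rule3d_b} is exactly strong enough to keep the planar‑region transfers from being routed one sweep too early, and no stronger. The analytic ingredient, transplanting the box identity of Lemma \ref{lemma:src_trans} to the individual subdomain PML operators, is routine in the constant medium case but should be invoked with care.
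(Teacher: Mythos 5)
The paper never actually proves Lemma \ref{lemma:candidate}: it is asserted as an immediate consequence of the single-source solving structure of the additive DDM recalled at the end of Section 2 (each $\Omega_{i,j,k}$ performs its only nonzero solve at step $|i-i_0|+|j-j_0|+|k-k_0|+1$, fed exclusively by transfers from the neighbors one index closer to the origin in each coordinate), and the section's real work is deferred to Lemmas \ref{lemma:I}--\ref{lemma:IV} and the sweep-by-sweep discussion. Your proposal supplies the argument the paper omits, and its core --- the sign-pattern computation showing that a nonzero cross-octant transfer into $\widetilde{\Omega}^{(\d)}$ can only originate from a subdomain sitting at the origin index level in the flipped coordinates, so that its octant $\d'$ is obtained from $\d$ by turning some $-1$ components into $+1$ (hence is a face, edge or opposite neighbor already solved, since this strictly lowers the position in the order \eqref{Sorder3D}) while the transfer direction equals $-1$ exactly on those coordinates (hence the source is associated with the shared face, edge or vertex) --- is correct; I also verified that the index arithmetic never moves a generating subdomain from a $+1$ octant to a $-1$ octant, so no other cross-octant transfers can arise. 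Two caveats. First, your second and third paragraphs prove more than the lemma asserts: the routing of the listed sources by Rules \ref{rule3d_a}--\ref{rule3d_b} and the ``smallest $l'\ge l$'' mechanism is the content of Lemmas \ref{lemma:I}, \ref{lemma:III}, \ref{lemma:IV} and of Lemma \ref{lemma:II} applied sweep by sweep, not of Lemma \ref{lemma:candidate}, which only identifies which candidate sources are needed; folding that verification in here obscures the division of labor the paper sets up. Second, your imported fact (ii) --- that every \emph{individual} transfer whose direction fails to point away from the origin vanishes identically, rather than merely that such transfers sum to zero at the step where they would be used --- is the one place where you rely on \cite{Leng2019} for more than this paper restates; it is true (it is the ``$u_2=0$ in $\Omega_1$'' half of Lemma \ref{lemma:src_trans} propagated through the construction), but it should be cited as an established property of the additive DDM rather than re-derived loosely from the shifted PML profile.
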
	 

Lemma \ref{lemma:candidate}  presents the requirement that we need to verify during the  sweeps of the diagonal sweeping DDM in $\R^3$. To simplify the verification of Algorithm \ref{alg:diag3D}, 
a few tools are introduced below. 
\begin{lemmaa}{\rm (Shared face in $\R^3$)} \label{lemma:I}
	When solving an octant, the candidate transferred sources associated with the shared face of its solved face  neighbor octant
	are successfully selected in Algorithm \ref{alg:diag3D}  according to Rule \ref{rule3d_a} and \ref{rule3d_b}.
\end{lemmaa}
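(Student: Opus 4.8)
The plan is to reduce Lemma~\ref{lemma:I} to an elementary case analysis on direction vectors. Fix the octant $\widetilde{\Omega}^{(\d_2)}$ being solved in the sweep of direction $\d_2$, and let $\widetilde{\Omega}^{(\d_1)}$ be one of its already–solved face–neighbor octants, solved in the earlier sweep of direction $\d_1$. Being face neighbors, $\d_1$ and $\d_2$ agree in two coordinates and differ in exactly one, say coordinate $c\in\{1,2,3\}$. Because the sweeping order (\ref{Sorder3D}) is lexicographic in the triples $(\d(3),\d(2),\d(1))$ with $+1$ ranked before $-1$, of two directions differing in a single coordinate the earlier one carries $+1$ there; hence $\d_1(c)=+1$ and $\d_2(c)=-1$. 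Consequently $\widetilde{\Omega}^{(\d_1)}$ occupies the larger–index side of the shared face in coordinate $c$, so every transferred source generated on that face when $\widetilde{\Omega}^{(\d_1)}$ is solved has $\d_{\text{src}}(c)=-1$, and being a \emph{candidate} for $\widetilde{\Omega}^{(\d_2)}$ (in the similar direction of $\d_2$, cf.\ Lemma~\ref{lemma:candidate}) forces $\d_{\text{src}}(c')\in\{0,\d_1(c')\}$ at the two coordinates $c'\ne c$. So only at most four candidate directions have to be examined.

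First I would check that $\d_2$ is admissible for each such $\d_{\text{src}}$. For Rule~\ref{rule3d_a}: $\d_{\text{src}}(c)\d_2(c)=1$ and $\d_{\text{src}}(c')\d_2(c')\in\{0,1\}$ for $c'\ne c$, so $\d_{\text{src}}\cdot\d_2\ge 1>0$ with all componentwise products nonnegative. For Rule~\ref{rule3d_b}: a projection dropping some coordinate $c'\ne c$ leaves the projections of $\d_1$ and $\d_2$ equal (they agree off $c$), hence not opposite; a projection keeping $c$ sends $\d_1$ to $(\d_1(c),\d_1(c'))$ and $\d_2$ to $(-\d_1(c),\d_1(c'))$, which are opposite only if $\d_1(c')=0$, impossible for an octant direction. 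Thus Rule~\ref{rule3d_b} is never triggered and $\d_{\text{src}}$ may legitimately be used in the $\d_2$-sweep.

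Next I would show $\d_{\text{src}}$ is inadmissible for every sweep $\d$ that weakly follows $\d_1$ but strictly precedes $\d_2$, so that the ``smallest admissible sweep $l'\ge l$'' selected in Algorithm~\ref{alg:diag3D} is exactly the $\d_2$-sweep. If $\d=\d_1$, then $\d_{\text{src}}(c)\d_1(c)=-1<0$ violates Rule~\ref{rule3d_a}. If $\d$ is strictly between and $\d(c)=+1$, again $\d_{\text{src}}(c)\d(c)=-1<0$ and Rule~\ref{rule3d_a} fails. If $\d$ is strictly between and $\d(c)=-1$, then $\d\ne\d_2$, and since $\d_2$ is the unique direction equal to $-1$ in coordinate $c$ and agreeing with $\d_1$ elsewhere, $\d$ must differ from $\d_1$ at some coordinate $c'\ne c$, i.e.\ $\d(c')=-\d_1(c')$; now either $\d_{\text{src}}(c')=\d_1(c')$, whence $\d_{\text{src}}(c')\d(c')=-1<0$ and Rule~\ref{rule3d_a} fails, or $\d_{\text{src}}(c')=0$, whence, on the plane spanned by coordinates $c$ and $c'$, $(\d_{\text{src}}(c),\d_{\text{src}}(c'))=(-1,0)$ has exactly one zero while $(\d_1(c),\d_1(c'))=(+1,\d_1(c'))$ and $(\d(c),\d(c'))=(-1,-\d_1(c'))$ are opposite, so Rule~\ref{rule3d_b} excludes $\d_{\text{src}}$ from the $\d$-sweep. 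Hence for every candidate source the first admissible sweep at or after its generating sweep $\d_1$ is the $\d_2$-sweep, which is what ``successfully selected'' means: Algorithm~\ref{alg:diag3D} adds $\d_{\text{src}}$ to the local source of the receiving subdomain for the sweep that solves $\widetilde{\Omega}^{(\d_2)}$, so it participates in that solve.

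I expect the only delicate point to be the intermediate case $\d(c)=-1$: there Rule~\ref{rule3d_a} alone is insufficient and one must see that Rule~\ref{rule3d_b} — the three–dimensional transcription of the planar ``opposite direction'' obstruction of Rule~\ref{rule2d_b} — fires precisely in the remaining configurations; the step above does this by isolating the single coordinate $c'$ where $\d$ and $\d_1$ disagree and reading off the obstruction in the $c$--$c'$ plane, and the only property of (\ref{Sorder3D}) actually used is that the earlier direction carries $+1$ in the disagreement coordinate. The companion facts for shared edges and shared vertices (with two, resp.\ three, coordinates differing between $\d_1$ and $\d_2$) should follow by the same bookkeeping, projecting onto the relevant coordinate planes.
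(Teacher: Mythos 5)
Your proposal is correct, and its first half coincides with the paper's entire proof: since a face neighbor's direction $\d_1$ differs from the current sweep direction $\d_2$ in exactly one coordinate $c$, no coordinate-plane projection can make the two sweep directions opposite (the projection dropping $c$ leaves them equal, and a projection keeping $c$ leaves the non-$c$ component unchanged and nonzero), so Rule \ref{rule3d_b} never fires, while Rule \ref{rule3d_a} holds by the very definition of a candidate source. The paper stops there. The second half of your argument --- checking that every sweep from the generating $\d_1$-sweep up to but excluding the $\d_2$-sweep is inadmissible, so that the ``smallest $l'\ge l$'' computed in Algorithm \ref{alg:diag3D} lands exactly on the $\d_2$-sweep --- is a genuine addition: the paper absorbs this point into the word ``unused'' in its definition of candidate sources and never verifies it, whereas you prove it by a clean case split (Rule \ref{rule3d_a} eliminates $\d=\d_1$ and every intermediate $\d$ with $\d(c)=+1$; for intermediate $\d$ with $\d(c)=-1$ you isolate a coordinate $c'\ne c$ where $\d$ and $\d_1$ disagree and invoke Rule \ref{rule3d_a} or Rule \ref{rule3d_b} in the $c$--$c'$ plane according as $\d_{\text{src}}(c')=\d_1(c')$ or $0$). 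This makes your write-up the more complete justification of what ``successfully selected'' must mean. One small slip to fix: in the admissibility check for $\d_2$ you say ``a projection dropping some coordinate $c'\ne c$ leaves the projections of $\d_1$ and $\d_2$ equal''; it is the projection dropping $c$ itself that does so, while the projections retaining $c$ are precisely those covered by your next clause --- the two labels are swapped but every projection is in fact handled, so the argument stands.
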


\begin{proof}
Since the octant and its face  neighbor octant could have only one opposite component,  the directions of the two octants aren't opposite under any of $x$-$y$,  $y$-$z$ and $x$-$z$ plane projection (in which two  opposite components are required). Consequently, Rule \ref{rule3d_b} for the  opposite direction  doesn't apply and these candidate transferred sources will not be excluded.
\end{proof}

\def\wdff{0.4}
\begin{figure*}[!ht]	
	\centering
	\begin{minipage}[t]{\wdff\linewidth}
		\centering
		\includegraphics[width=0.9\textwidth]{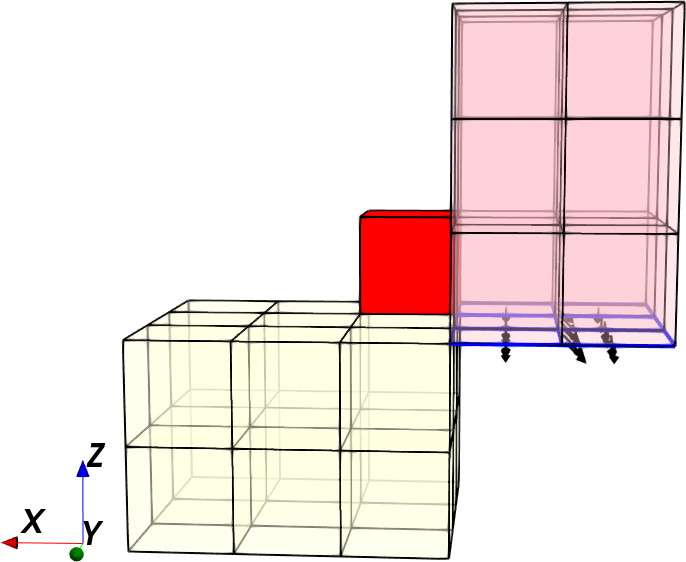}\\
		(a) 
	\end{minipage}
	\hspace{0.5cm}
	\begin{minipage}[t]{\wdff\linewidth}
		\centering
		\includegraphics[width=0.9\textwidth]{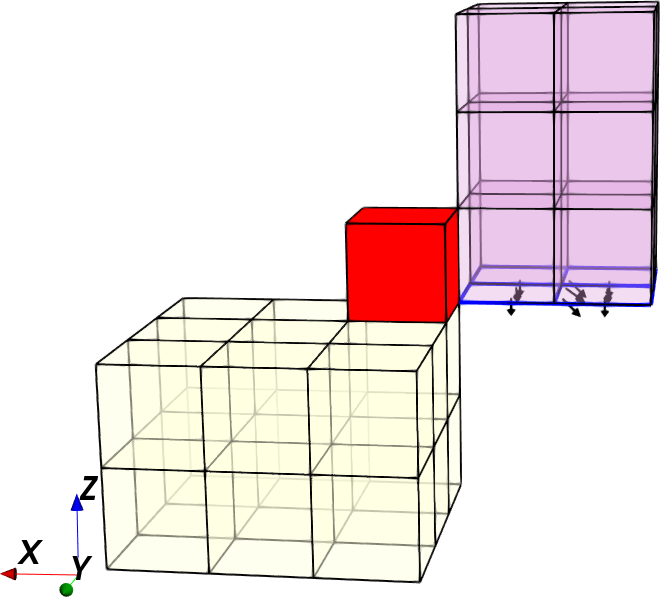}\\
		(b) 
	\end{minipage}
	\caption{Illustration of the case in Lemma \ref{lemma:II}.
		The light yellow transparent region is the octant to be solved, the red subdomain is the origin subdomain.
		The pink or purple region is the solved octant, the face to be checked is marked with blue frames, and the arrows 
		denote the unused transferred sources to be excluded from the current sweep. 
		 \label{fig:lemma_op}}
\end{figure*}

The following Lemma is used to check in  Algorithm \ref{alg:diag3D}  whether the candidate transferred sources associated with a face of a distance-2 or distance-3 solved octant are   excluded from the current sweep,
which is a very common situation.
\begin{lemmaa}{\rm (Nonadjacent face in $\R^3$)} \label{lemma:II}
	When solving an octant, 
	suppose under one of the $x$-$y$,  $y$-$z$ and $x$-$z$ plane projection, 
	both the octant to be solved and the origin subdomain are in the same half of a plane.  Under this plane projection, a  distance-2 or distance-3 solved octant is in the opposite position, one of its face is to be checked and both the solved octant and the face are in the other half of the plane. 
	Then the unused transferred sources associated with the to-be-checked face of the solved  octant,   
	will be excluded from this octant solving in Algorithm \ref{alg:diag3D} according to Rule \ref{rule3d_b}.	
\end{lemmaa}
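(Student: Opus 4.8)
The plan is to verify, for exactly the coordinate-plane projection supplied by the hypothesis, that both conditions of Rule~\ref{rule3d_b} hold. Write $\d_2$ for the direction of the octant $\widetilde{\Omega}^{\d_2}$ to be solved in the current sweep (which is also the direction of that sweep), $\d_1$ for the direction of the already-solved octant whose face is to be checked, and $\d_{\text{src}}$ for the direction of an unused transferred source associated with that face. Let $\pi$ be the $x$-$y$, $y$-$z$ or $x$-$z$ projection named in the hypothesis, $a,b$ the two axes it retains, and $c$ the axis it drops. Everything reduces to showing (i) $\pi(\d_{\text{src}})$ has exactly one zero component and (ii) $\pi(\d_1)$ and $\pi(\d_2)$ are opposite, for then Rule~\ref{rule3d_b} forbids using $\d_{\text{src}}$ in the $\d_2$-sweep.

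First I would translate the geometric placement into sign relations. Since, under $\pi$, both $\widetilde{\Omega}^{\d_2}$ and the origin subdomain $\Omega_{i_0,j_0,k_0}$ lie in one half of the plane while the solved octant $\widetilde{\Omega}^{\d_1}$ occupies the opposite position, reading off the region formulas \eqref{octa3D} shows that the $a$- and $b$-components of $\d_1$ are the negatives of those of $\d_2$; that is, $\pi(\d_1)=-\pi(\d_2)$, which is condition (ii). When $\widetilde{\Omega}^{\d_1}$ is the distance-$3$ opposite octant this holds under every projection, and when it is a distance-$2$ edge neighbour it holds precisely for the projection dropping the shared axis — which is the one forced by the phrase ``in the opposite position'' in the hypothesis — so the named $\pi$ is the correct one to work with in both cases.

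Next I would analyse $\pi(\d_{\text{src}})$. By the definition of the transferred sources associated with a face, $\d_{\text{src}}$ has its component along the normal axis of that face equal to the negative of $\d_1$ along that axis, i.e. it points out of $\widetilde{\Omega}^{\d_1}$ through the face. The hypothesis that the to-be-checked face lies, together with $\widetilde{\Omega}^{\d_1}$, in the half of the $\pi$-plane opposite to $\widetilde{\Omega}^{\d_2}$ forces this face to be perpendicular to one of the retained axes $a,b$; and among its associated sources, the only ones whose exclusion has to be verified are those that are candidates for $\widetilde{\Omega}^{\d_2}$ in the sense of Lemma~\ref{lemma:candidate} and that are not already accounted for through the shared edge or face neighbours — these must travel straight along the remaining in-plane axis to reach $\widetilde{\Omega}^{\d_2}$, which forces the other in-plane component of $\d_{\text{src}}$ to vanish. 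Hence $\pi(\d_{\text{src}})$ is axial with exactly one zero component, which is condition (i). Combining (i) and (ii), Rule~\ref{rule3d_b} applies to the pair of sweeps $\d_1$ (generating) and $\d_2$ (later), so these transferred sources are excluded from the current octant solve, as claimed.

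I expect the main obstacle to be this last step: carefully matching the combinatorial definition of ``transferred sources associated with a face'' against the geometric ``same half / opposite position'' hypothesis so as to conclude that the relevant sources project to axial vectors under $\pi$. In particular one must check that faces perpendicular to the dropped axis $c$ do not occur under the stated hypothesis (their associated sources need not project to axial vectors, and that configuration belongs to the companion ``shared face'' situation handled in Lemma~\ref{lemma:I}), and one must confirm the argument simultaneously for distance-$2$ and distance-$3$ solved octants. A short case check on the three possible orientations of the face, using \eqref{octa3D} and Lemma~\ref{lemma:candidate}, should close this gap.
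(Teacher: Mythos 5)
Your overall strategy is the same as the paper's: reduce the claim to verifying the two hypotheses of Rule \ref{rule3d_b}, namely that $\pi(\d_{\text{src}})$ has exactly one zero component and that $\pi(\d_1)$ and $\pi(\d_2)$ are opposite. Your handling of the second condition agrees with the paper, which works on the representative configuration $\d_2=(+1,+1,-1)$, solved octant $(-1,\pm 1,+1)$, face normal $(0,0,-1)$ and the $x$-$z$ projection; reading the opposite projections off \eqref{octa3D}, and noting that for a distance-$2$ neighbour only the projection dropping the shared axis works while for the distance-$3$ neighbour any projection works, is exactly right.

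The gap is where you yourself locate it, and it is the crux of the lemma. The paper closes the first condition with a short sign argument that your proposal does not contain: for a candidate source associated with the to-be-checked face, the component along the face normal is forced to equal $\pm 1$ (one nonzero retained component), while the component along the axis normal to the dividing plane is $\le 0$ because the origin subdomain --- and with it the octant to be solved --- lies in the opposite half-plane from the solved octant and its face; candidacy for $\widetilde{\Omega}^{\d_2}$ (Rule \ref{rule3d_a}) forces that same component to be $\ge 0$, so it vanishes and $\pi(\d_{\text{src}})$ is axial. Your substitute --- that the candidates with two nonzero in-plane components are ``already accounted for'' through Lemmas \ref{lemma:I} and \ref{lemma:III} and that the remaining ones ``must travel straight along the remaining in-plane axis'' --- both shifts what is being proved (the lemma asserts exclusion of the face-associated sources themselves, not merely of those left over after the shared-face and shared-edge lemmas) and is asserted rather than derived. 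Likewise, your claim that the hypothesis forces the face to be perpendicular to a retained axis does not follow from the stated hypothesis alone (a face normal to the dropped axis can also lie entirely in the far half-plane); the paper sidesteps this by checking, sweep by sweep, only faces whose normals are retained by the chosen projection. To complete your argument you need to supply the sign argument above, or an equivalent case check over the three possible face orientations, rather than defer it.
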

\begin{proof}
The situation of the above Lemma is illustrated  in Figure \ref{fig:lemma_op}. Assume that
under the $x$-$z$ projection, the octant $\widetilde{\Omega}^{(+1,+1,-1)}$ (yellow) is to be solved, 
the solved octant is either
the distance-2 octant $\widetilde{\Omega}^{(-1,+1,+1)}$ (pink) in Figure \ref{fig:lemma_op}-(a)
or the  distance-3 octant $\widetilde{\Omega}^{(-1,-1,+1)}$ (purple) in Figure \ref{fig:lemma_op}-(b),
and the face to be checked has the outer normal $(0,0,-1)$. 
The negative $x$-half plane has the octant and the origin subdomain, while
 the positive $x$-half plane has the solved octant and the face.
Now suppose one of the candidate transferred sources associated with the to-be-checked face of the solved octant $\widetilde{\Omega}^{(-1,+1,+1)}$ (pink) or $\widetilde{\Omega}^{(-1,-1,+1)}$ (purple) 
has direction $(d_x, d_y, d_z)$.
The candidate transferred sources are associated with the face of the outer normal  $(0,0,-1)$, thus $d_z = -1$.
Since the  origin subdomain is in the  positive $x$ direction, we have $d_x \leq 0$.
If $d_x < 0$, the transferred sources will not be in the similar direction of the octant $\widetilde{\Omega}^{(+1,+1,-1)}$. 
If $d_x = 0$, then the direction of the transferred sources  will become 
$(0, \pm 1, -1)$, which is $(0,-1)$ under the $x$-$z$ projection. Since the octant $\widetilde{\Omega}^{(+1,+1,-1)}$ (light yellow) and the solved octant are in the opposite position under the $x$-$z$ projection, 
  Rule \ref{rule3d_b} applies and the candidate transferred sources associated with the to-be-checked face of the solved  octant  are excluded in Algorithm \ref{alg:diag3D}.
\end{proof}

\begin{lemmaa}{\rm (Shared edge in $\R^3$)}\label{lemma:III}
	When solving an octant that has two or three solved face  neighbor octants, the candidate transferred sources associated with the shared edge of its solved distance-2 neighbor octants
are successfully  selected in Algorithm \ref{alg:diag3D} according to Rules \ref{rule3d_a} and \ref{rule3d_b}.
\end{lemmaa}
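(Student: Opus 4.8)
The plan is to mirror the proof of Lemma~\ref{lemma:I}, but with one extra wrinkle: since an octant and a distance-2 neighbor differ in \emph{two} sign components rather than one, Rule~\ref{rule3d_b} can no longer be dismissed by inspection, and I will instead show that it fails on the edge-associated candidate sources for a slightly more refined reason. First I would fix notation. Let $\widetilde{\Omega}^{(\dirThreeD)}$ be the octant currently being solved, with direction $\d_2:=(\dirThreeD)$ all of whose three components are $\pm1$. By hypothesis there is a solved distance-2 (edge) neighbor octant; call its direction $\d_1$, and let $p,q$ be the two coordinate indices in which $\d_1$ and $\d_2$ disagree and $r$ the one in which they agree, so $\d_1(p)=-\d_2(p)$, $\d_1(q)=-\d_2(q)$, $\d_1(r)=\d_2(r)\neq0$. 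This neighbor octant is solved entirely within the sweep along $\d_1$, which is therefore the sweep that generated the edge-associated transferred sources in question; by Lemma~\ref{lemma:candidate} those are exactly the edge-associated candidates that the current octant needs from that neighbor. Up to relabeling the axes and a global sign flip the situation is $\d_1=(+1,+1,+1)$, $\d_2=(-1,-1,+1)$, and the three-solved-face case (octant $(-1,-1,-1)$) reduces neighbor-by-neighbor to this representative one.

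Next I would pin down the admissible direction $\d_{\mathrm{src}}$ of such a source. Unwinding the definitions, a transferred source associated with the $p$-face of an octant has its $p$-component equal to the $p$-sign of that octant's direction (hence nonzero), with the remaining two components unconstrained in sign; likewise for the $q$-face. A source associated with the shared edge lies in the intersection of these two classes, so $d_p$ and $d_q$ are both nonzero. Being a \emph{candidate} for the current octant — which is precisely what keeps it from being discarded by Rule~\ref{rule3d_a} — it lies in the similar direction of $\d_2$, which forces $d_p=\d_2(p)$, $d_q=\d_2(q)$, and $d_r\in\{0,\d_2(r)\}$. In particular $\d_{\mathrm{src}}$ can have a zero component only in coordinate $r$.

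Finally I would check Rule~\ref{rule3d_b}. For it to exclude $\d_{\mathrm{src}}$ there would have to be a two-dimensional coordinate projection under which $\d_1$ and $\d_2$ become opposite vectors \emph{and} $\d_{\mathrm{src}}$ becomes a vector with exactly one zero entry. Any projection that retains coordinate $r$ leaves $\d_1$ and $\d_2$ sharing the common nonzero entry $\d_1(r)=\d_2(r)$, so they are not opposite there; hence the only projection making $\d_1,\d_2$ opposite is the $p$-$q$ projection. But under the $p$-$q$ projection $\d_{\mathrm{src}}$ becomes $(d_p,d_q)$ with both entries nonzero, so it does not have exactly one zero entry. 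Thus no projection satisfies both conditions of Rule~\ref{rule3d_b}, the edge-associated candidate sources survive both Rules~\ref{rule3d_a} and~\ref{rule3d_b}, and they are therefore correctly selected in Algorithm~\ref{alg:diag3D}.

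I expect the genuine work to sit in the middle step: translating ``unused transferred source associated with a shared edge'' into the componentwise constraints $d_p,d_q\neq0$, $d_r\in\{0,\d_2(r)\}$, and, via Lemma~\ref{lemma:candidate}, confirming both that these are exactly the sources that must be accounted for and that they were generated in the sweep along $\d_1$ rather than in some later re-transfer through an intermediate octant. The direction arithmetic of the last step, by contrast, is immediate once the reduction to ``$\d_1$ and $\d_2$ differ in exactly the coordinates $p,q$'' is in hand.
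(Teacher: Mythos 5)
Your argument is correct and is essentially the paper's own proof, just written out in more detail: the paper likewise observes that the current octant and its distance-2 neighbor are opposite under only one plane projection (your $p$-$q$ projection), and that under this projection the edge-associated candidate sources have two nonzero components, so Rule \ref{rule3d_b} cannot apply. The extra bookkeeping you supply (the componentwise constraints $d_p=\d_2(p)$, $d_q=\d_2(q)$, $d_r\in\{0,\d_2(r)\}$ and the explicit check that projections retaining $r$ leave the octant directions non-opposite) is a faithful elaboration of the same reasoning rather than a different route.
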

\begin{proof}
	The octant and its solved distance-2 neighbor octant is only opposite under one plane projection, and under that plane projection, these candidate transferred sources will have two non-zero components, hence Rule \ref{rule3d_b} doesn't apply and these candidate transferred sources will not be excluded.
\end{proof}

\begin{lemmaa}{\rm (Shared vertex in $\R^3$)} \label{lemma:IV}
	When solving an octant that has three solved face  neighbor octants, the candidate transferred sources associated with the shared vertex of its  solved distance-3 neighbor octant
are successfully  selected in Algorithm \ref{alg:diag3D} according to Rules \ref{rule3d_a} and \ref{rule3d_b}.
\end{lemmaa}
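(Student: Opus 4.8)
The plan is to reproduce, for the distance-$3$ (opposite) octant, the short rule-checking arguments already used for the shared face in Lemma \ref{lemma:I} and for the shared edge in Lemma \ref{lemma:III}. First I would identify precisely the candidate transferred sources in question. When the octant $\widetilde{\Omega}^{(\dirThreeD)}$ with $(\dirThreeD)=\d$ (all three components $\pm1$) is being solved, its solved distance-$3$ neighbour is the opposite octant $\widetilde{\Omega}^{-\d}$, and the vertex they share is the corner of the origin subdomain $\Omega_{i_0,j_0,k_0}$. By the definition of the transferred sources associated with a vertex as the intersection of those associated with the three faces of $\widetilde{\Omega}^{-\d}$ meeting at that vertex, the only subdomain of $\widetilde{\Omega}^{-\d}$ that can contribute is its corner subdomain adjacent to $\Omega_{i_0,j_0,k_0}$, and the only transfer direction lying in the similar direction of all three of those faces simultaneously is the full diagonal direction $\d$ itself; any direction with a vanishing component is either not associated with all three faces or is already discarded by Rule \ref{rule3d_a}. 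Hence the candidate vertex sources carry the direction $\d_{\text{src}}=\d$.

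With this identification the two rule checks are immediate. For Rule \ref{rule3d_a}: since $\d_{\text{src}}=\d$ equals the direction of the current sweep, we have $\d_{\text{src}}\cdot\d=3>0$ and $\d_{\text{src}}(k)\,\d(k)=1\ge0$ for $k=1,2,3$, so $\d_{\text{src}}$ is in the similar direction to the current sweep and is not discarded. For Rule \ref{rule3d_b}: $\d_{\text{src}}=\d$ has all three components nonzero, so its projection onto each of the $x$-$y$, $y$-$z$ and $x$-$z$ planes again has both components nonzero and never has exactly one zero component; the hypothesis of Rule \ref{rule3d_b} therefore cannot be met, regardless of which earlier sweep generated the source (here, the sweep along $-\d$ that solved $\widetilde{\Omega}^{-\d}$), so it is not excluded. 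Consequently these sources pass both filters in Algorithm \ref{alg:diag3D} and are available when $\widetilde{\Omega}^{(\dirThreeD)}$ is solved; combined with Lemmas \ref{lemma:I}--\ref{lemma:III}, this confirms the exact collection of sources prescribed by Lemma \ref{lemma:candidate}.

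I do not expect a substantive obstacle. The only step requiring a little care is the first one — justifying that a vertex-associated source must carry the full diagonal direction $\d$ rather than some direction with a zero component. This amounts to unwinding the definitions of the face-, edge- and vertex-associated transferred sources together with the similar-direction condition, which is bookkeeping of the same kind performed in the proofs of Lemmas \ref{lemma:I} and \ref{lemma:III}; once it is in place, the verification against Rules \ref{rule3d_a} and \ref{rule3d_b} is a one-line computation.
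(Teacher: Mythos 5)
Your argument is correct and is essentially the paper's own proof, which simply observes that the vertex-associated candidate sources have three non-zero components so that Rule \ref{rule3d_b} cannot apply; you have merely spelled out the identification of the direction $\d_{\text{src}}=\d$ and the Rule \ref{rule3d_a} check more explicitly. No gap.
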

\begin{proof}
This is  obvious since these candidate transferred sources have three non-zero components and Rule \ref{rule3d_b} doesn't apply at all.
\end{proof}

With the above results, the verification of Algorithm \ref{alg:diag3D} becomes much easier.  The candidate transferred sources associated with the sharing faces, edges and vertices have already been selected by Lemmas \ref{lemma:I}, \ref{lemma:III} and  \ref{lemma:IV}, and now the main concern is whether the  candidate transferred sources that aren't  listed in Lemma \ref{lemma:candidate} are excluded by Lemma \ref{lemma:II} with the nonadjacent face.
Without loss of generality, we take a $5\times 5\times 5$ ($\Nbx=\Nby=\Nbz=5$) domain partition
 to illustrate the solving process,  and assume the source lies in the subdomain $\Omega_{3,3,3}$ ($i_0=j_0=k_0=3$).  The sweep along each of the directions contains a total of $\Nbx+\Nby+\Nbz-2 = 5+5+5-2=13$ steps.
  We will illustrate the first sweep in details  and  the following sweeps  will be performed similarly;
in particular, we will discuss  and verify the choice of transferred sources to be used by each of the octant solves.

\def\wdff{0.325}
\begin{figure*}[!ht]	
	\centering
	\begin{minipage}[t]{\wdff\linewidth}
		\centering
		\includegraphics[width=0.9\textwidth]{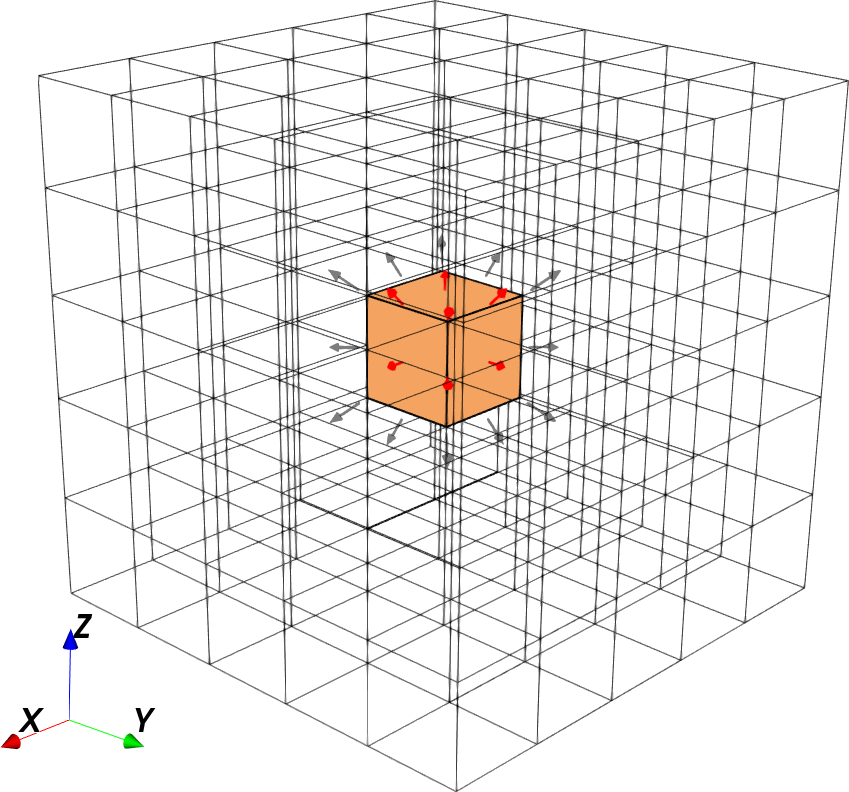}\\
		(a) First sweep: step 7 
	\end{minipage}
	\begin{minipage}[t]{\wdff\linewidth}
		\centering
		\includegraphics[width=0.9\textwidth]{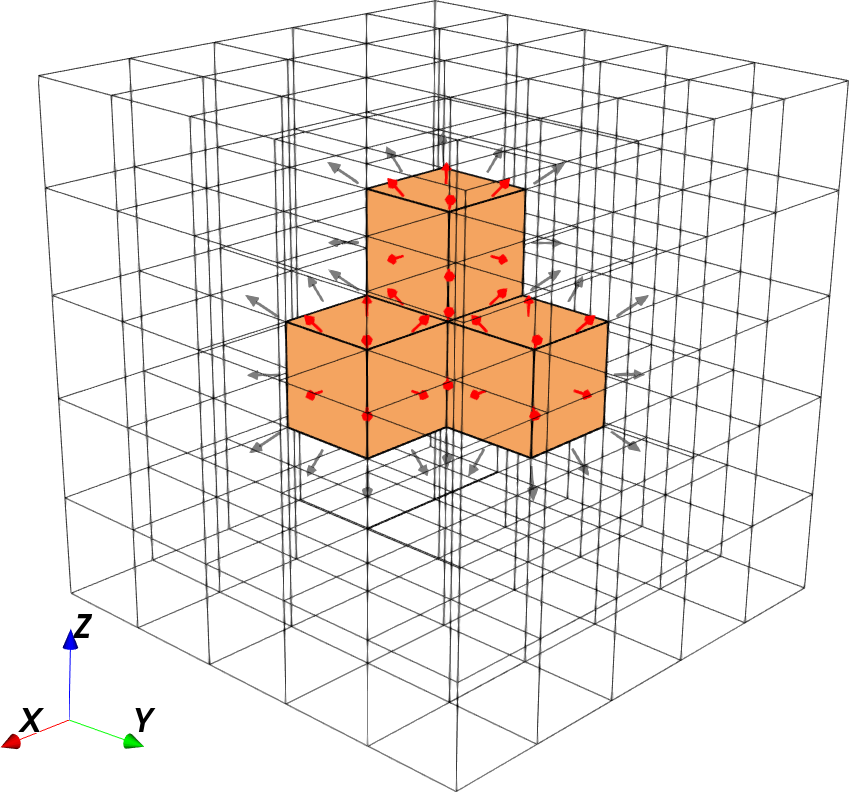}\\
		(b) First sweep: step 8 
	\end{minipage}
	\begin{minipage}[t]{\wdff\linewidth}
		\centering
		\includegraphics[width=0.9\textwidth]{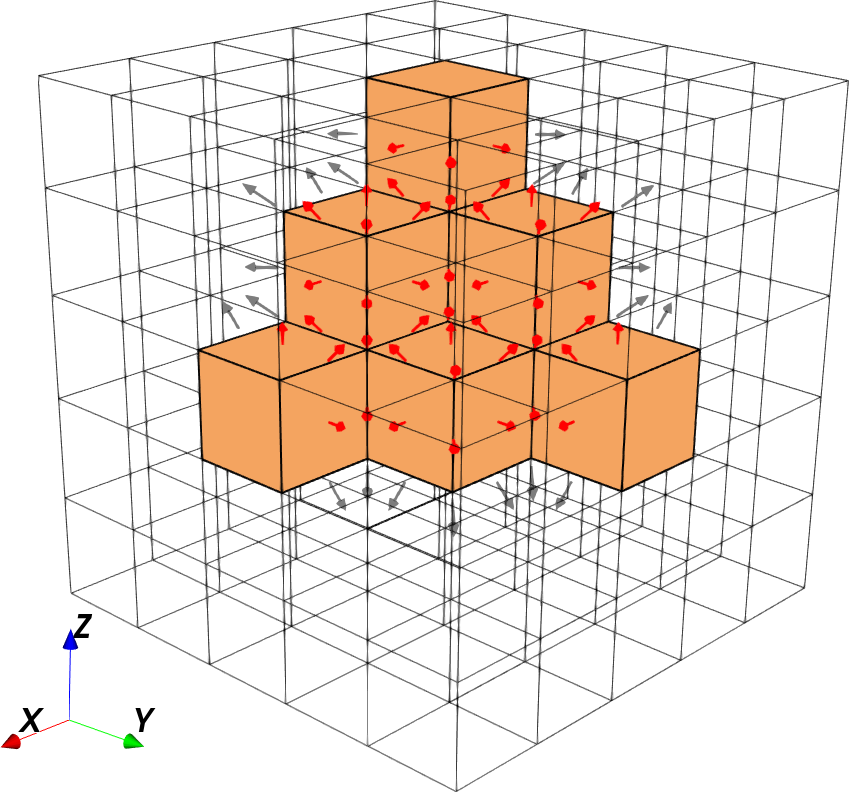}\\
		(c) First sweep: step 9 
	\end{minipage}
	
	\vspace{0.3cm}
	\begin{minipage}[t]{\wdff\linewidth}
		\centering
		\includegraphics[width=0.9\textwidth]{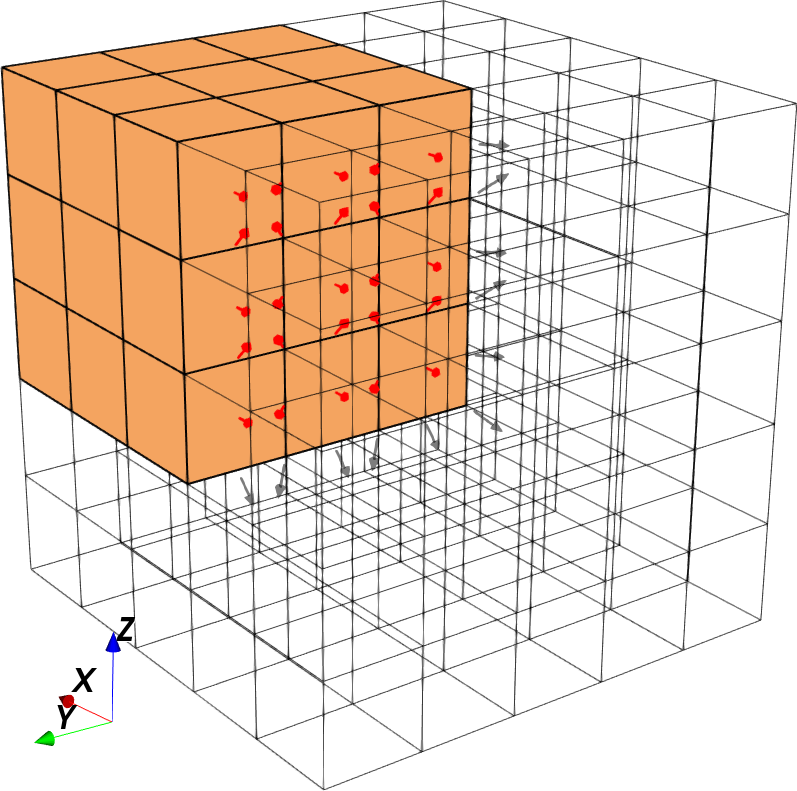}\\
		(d) Before second sweep 
	\end{minipage}
	\begin{minipage}[t]{\wdff\linewidth}
		\centering
		\includegraphics[width=0.9\textwidth]{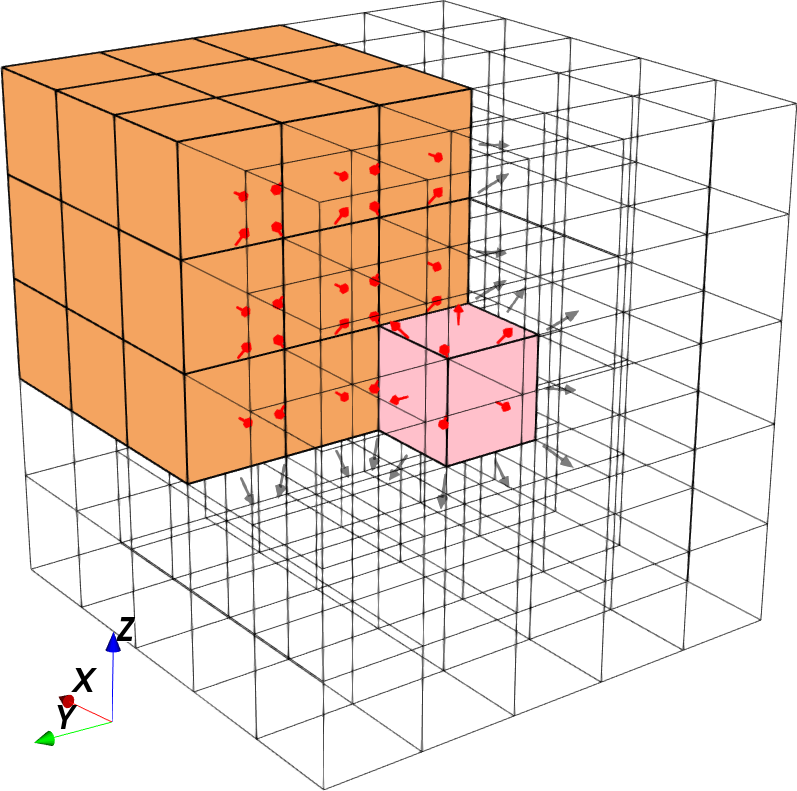}\\
		(e) Second sweep: step 8 
	\end{minipage}
	\begin{minipage}[t]{\wdff\linewidth}
		\centering
		\includegraphics[width=0.9\textwidth]{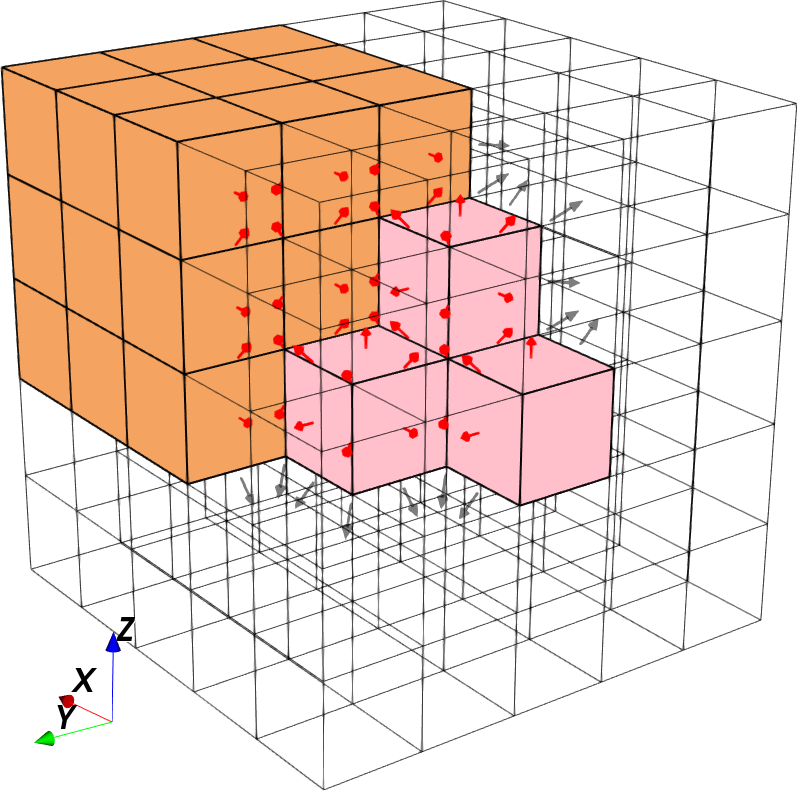}\\
		(f) Second sweep: step 9  
	\end{minipage}
	\caption{The first sweep $(+1,+1,+1)$ and the second sweep $(-1,-1,-1)$ in the diagonal sweeping DDM in $\R^3$. 
          The arrows denote the  transferred sources with their directions, where the red ones are used in the current sweep
          while the black ones are not (due to Rule \ref{rule3d_a}).
          \label{fig:sweep3d_12}}
\end{figure*}

The {\bf first sweep} of direction $(+1,+1,+1)$ is performed to construct the solution in the octant  $ \widetilde{\Omega}^{(+1,+1,+1)} =\Omega_{3,5;3,5;3,5}$,
where the $s$-th step of this sweep handles the group of subdomains $\{\Omega_{i,j,k}\}$ with $(i-1)+(j-1)+(k-1)+1=i+j+k-2=s$. 
In the first $(i_0-1)+(j_0-1)+(k_0-1)=6$ steps, the local source and solutions in the subdomains are all zero.
Then at step $(i_0-1)+(j_0-1)+(k_0-1)+1=7$, the subdomain  problem in $\Omega_{i_0,j_0,k_0}=\Omega_{3,3,3}$ is solved with the source $f_{3,3,3}$, and $3^3-1=26$ transferred sources are generated and passed to its  neighbor subdomains correspondingly as  shown in Figure \ref{fig:sweep3d_12}-(a).
At step $8$, 
the $x$, $y$ and $z$ directional source transfers are applied on 
 $\Omega_{4,3,3}$, $\Omega_{3,4,3}$ and $\Omega_{3,3,4}$ respectively,
 and the problem in each of these subdomains is solved with just one transferred source at step 7 from $\Omega_{3,3,3}$ as the local source,
 i.e., the subdomain problem in $\Omega_{4,3,3}$ is solved with the $(1,0,0)$ directional transferred source,
 the subdomain problem in $\Omega_{3,4,3}$  with the $(0,1,0)$ directional transferred source and  the subdomain problem in $\Omega_{3,3,4}$  with the $(0,0,1)$ directional transferred source. For each of them, 
17 new transferred sources are then generated and passed to its corresponding  neighbor subdomains as  shown in Figure \ref{fig:sweep3d_12}-(b).
At step 9, 
the $x$, $y$ and $z$ directional source transfers are applied on $\Omega_{5,3,3}$, $\Omega_{3,5,3}$ and $\Omega_{3,3,5}$ respectively and the problems in these subdomains are solved just as step 8. Additionally, the $x$-$y$, $y$-$z$ and $x$-$z$ directional source transfers are applied on $\Omega_{4,4,3}$, $\Omega_{3,4,4}$ and $\Omega_{4,3,4}$, the problems in these subdomains are solved with the sum of three transferred sources from their neighbor subdomains respectively, 
e.g., the subdomain problem in $\Omega_{4,4,3}$ is solved with the sum of 
the $(1,0,0)$ directional transferred source from $\Omega_{3,4,3}$ at step 8,  
the $(0,1,0)$ directional transferred source from $\Omega_{4,3,3}$ at step 8, and 
the $(1,1,0)$ directional transferred source from $\Omega_{3,3,3}$ at step 7. For each of the subdomains $\Omega_{4,4,3}$, $\Omega_{3,4,4}$ and $\Omega_{4,3,4}$,
12 new  transferred sources are  generated and  passed to their corresponding  neighbor subdomains as shown in Figure \ref{fig:sweep3d_12}-(c).
At  step 10,  the $x$, $y$, $z$, $x$-$y$, $y$-$z$ and $x$-$z$ directional source transfers are applied just as step 9.  Additionally the $(+1,+1,+1)$ directional source transfer is applied on $\Omega_{4,4,4}$, and this subdomain  problem  is solved with the sum of seven transferred sources from its  neighbor subdomains that are solved in previous steps.
The following steps continue similarly  and after $\Nbx+\Nby+\Nbz-2$ steps,  the solution in octant $\widetilde{\Omega}^{(+1,+1,+1)}$ is successfully constructed. 

Then the {\bf second sweep} with direction $(-1,+1,+1)$ is performed, which aims at constructing the solution in the octant  $\widetilde{\Omega}^{(-1,+1,+1)} =\Omega_{1,2;3,5;3,5}$. The $s$-th step of this sweep handles the group of subdomains $\{\Omega_{i,j,k}\}$ with $(\Nbx-i)+(j-1)+(k-1)+1=4-i+j+k=s$. 
The sweeping solve procedure  is similar to the first sweep as  shown in Figure \ref{fig:sweep3d_12}-(d) to (f), except that 
some transferred sources  from the first sweep  are used due to Rule \ref{rule3d_a}.

%
\def\wdff{0.325}
\begin{figure*}[!ht]	
	\centering
	\begin{minipage}[t]{\wdff\linewidth}
		\centering
		\includegraphics[width=0.9\textwidth]{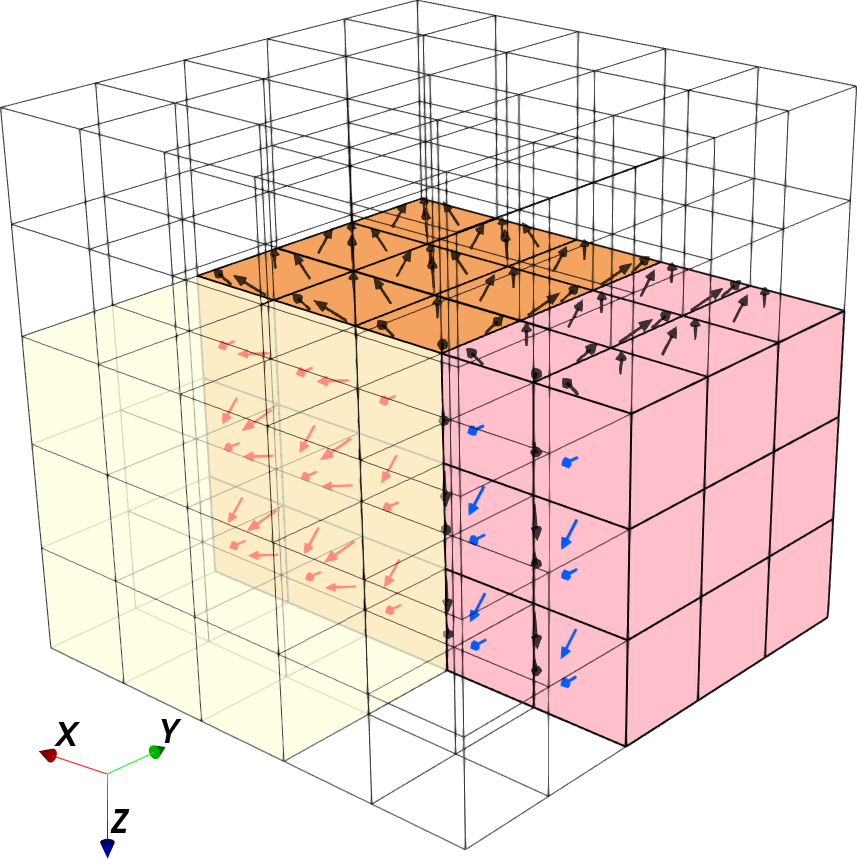}\\
		(a) Third sweep
	\end{minipage}
	\begin{minipage}[t]{\wdff\linewidth}
		\centering
		\includegraphics[width=0.9\textwidth]{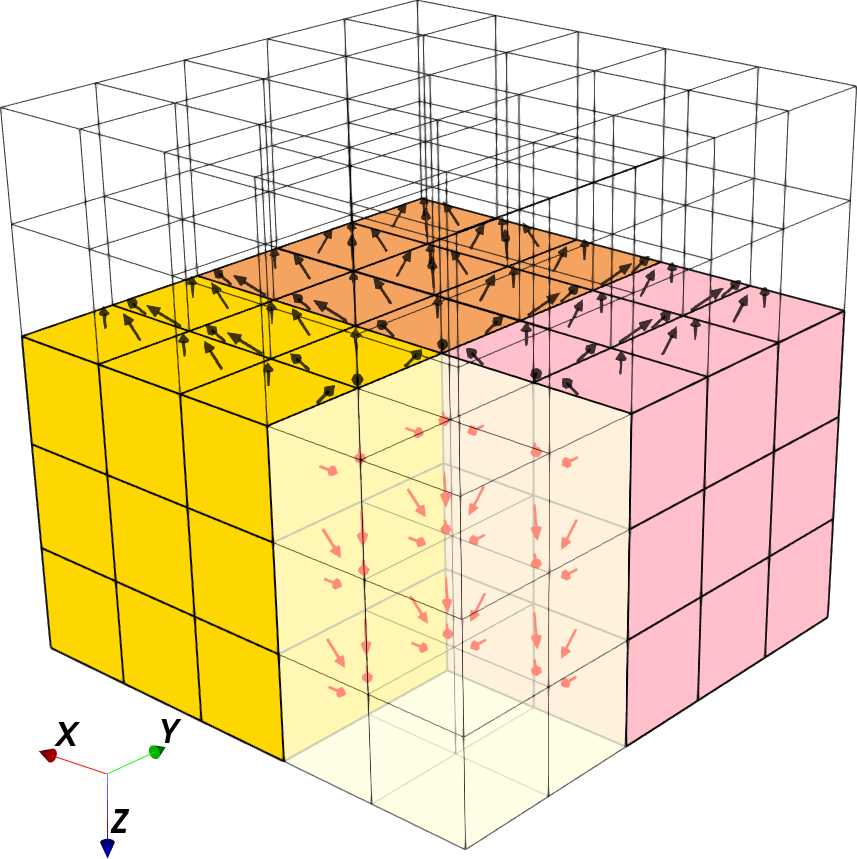}\\
		(b) Fourth sweep
	\end{minipage}
	\begin{minipage}[t]{\wdff\linewidth}
		\centering
		\includegraphics[width=0.9\textwidth]{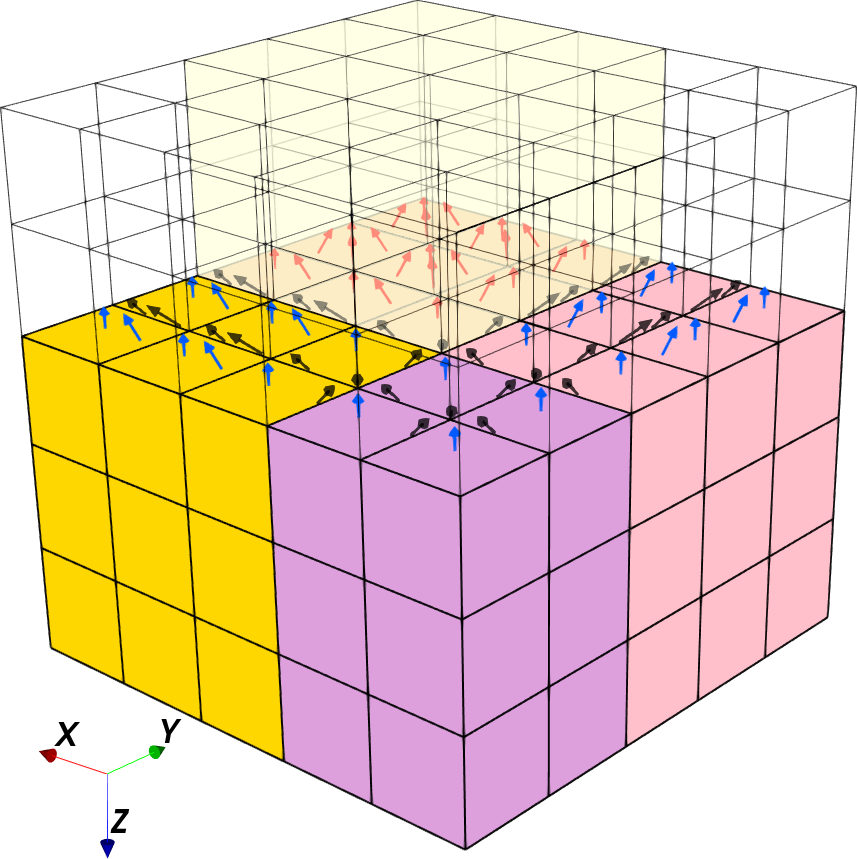}\\
		(c) Fifth sweep
	\end{minipage}
	
	\vspace{0.3cm}
	\begin{minipage}[t]{\wdff\linewidth}
		\centering
		\includegraphics[width=0.9\textwidth]{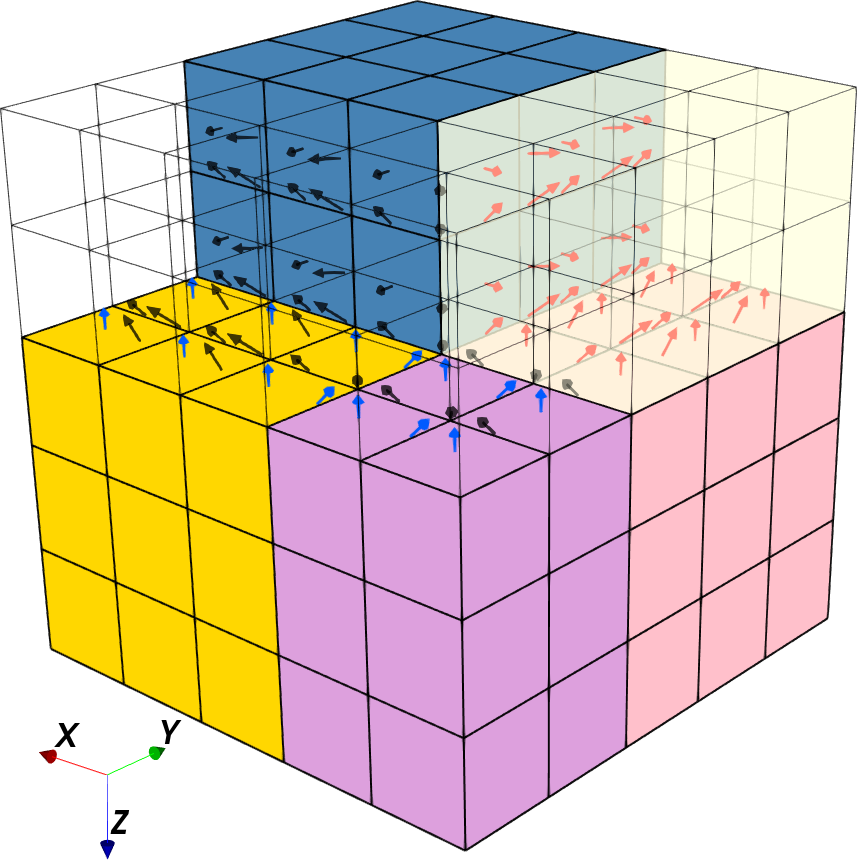}\\
		(d) Sixth sweep
	\end{minipage}
	\begin{minipage}[t]{\wdff\linewidth}
		\centering
		\includegraphics[width=0.9\textwidth]{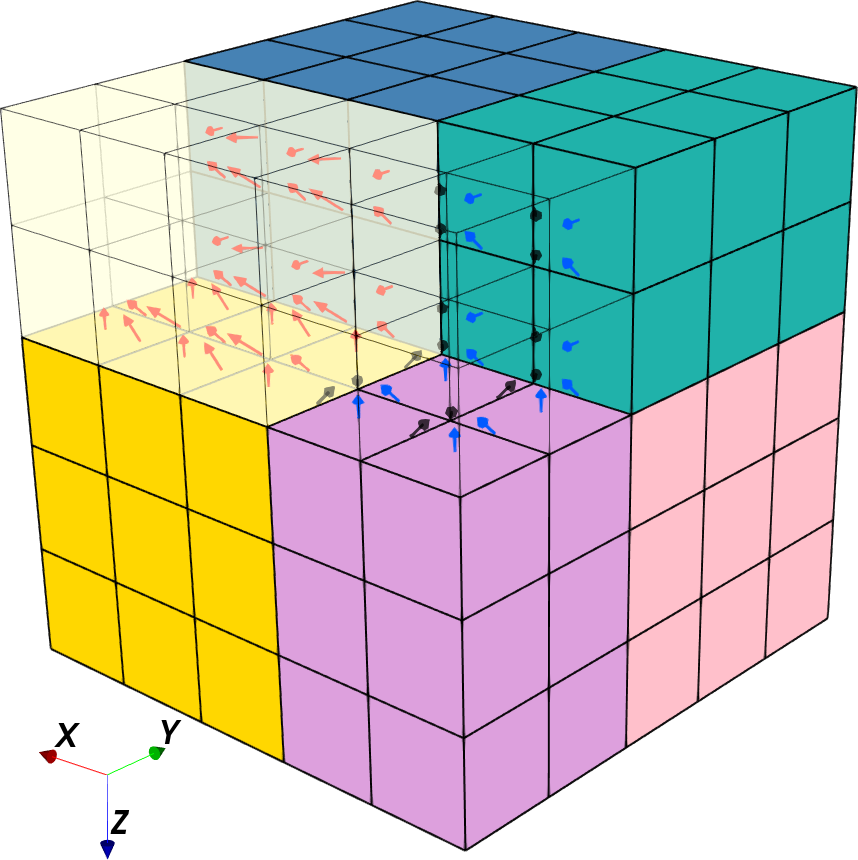}\\
		(e) Seventh sweep
	\end{minipage}
	\begin{minipage}[t]{\wdff\linewidth}
		\centering
		\includegraphics[width=0.9\textwidth]{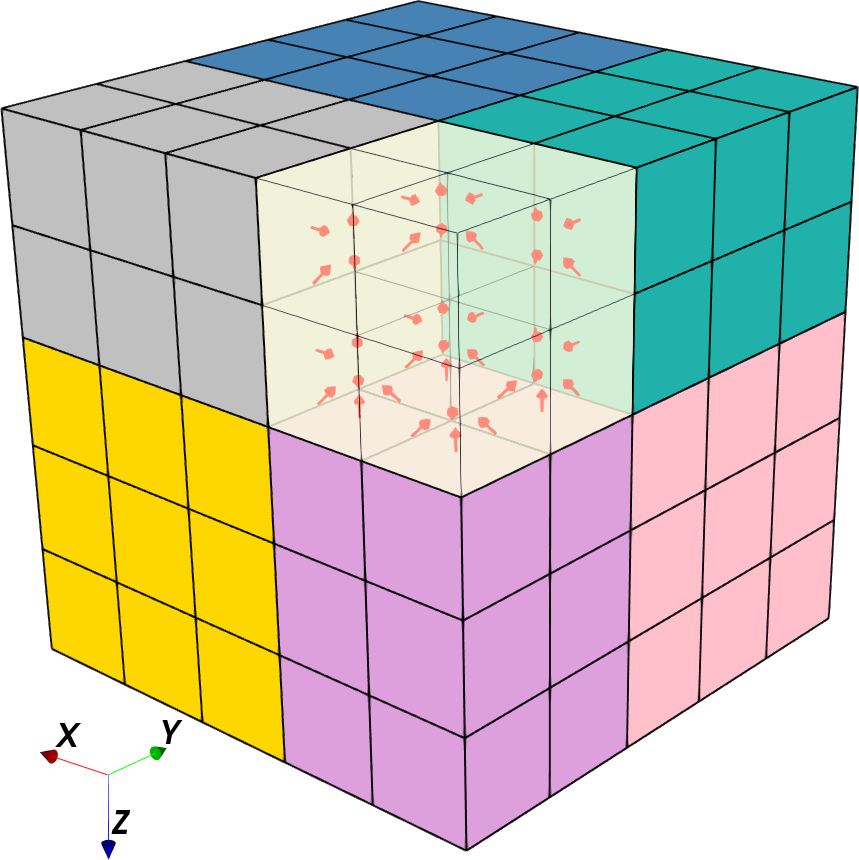}\\
		(f) Eighth sweep
	\end{minipage}
	\caption{At the beginning of the third to eighth sweeps in the diagonal sweeping DDM in $\R^3$,
          where the light yellow transparent region denotes the octant to be solved in the current sweep.
		The arrows denote the  transferred sources with their directions, both the red and blue ones are in the similar direction to the current sweep ( 
		however the blues ones are excluded from being used in the current sweep due to Rule \ref{rule3d_b}), and the black ones are not (the black ones are excluded from being used in the current 
		sweep due to Rule \ref{rule3d_a}).	
          \label{fig:sweep3d_36}}
\end{figure*}

\def\wdff{0.325}
\begin{figure*}[h!]	
	\centering
	\begin{minipage}[t]{\wdff\linewidth}
		\centering
		\includegraphics[width=0.9\textwidth]{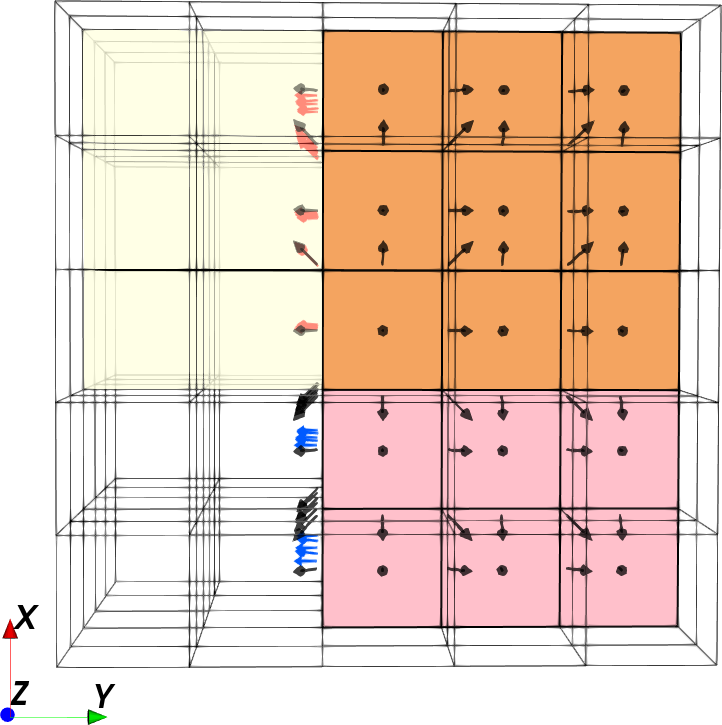}\\
		(a) Third sweep
	\end{minipage}
	\begin{minipage}[t]{\wdff\linewidth}
		\centering
		\includegraphics[width=0.9\textwidth]{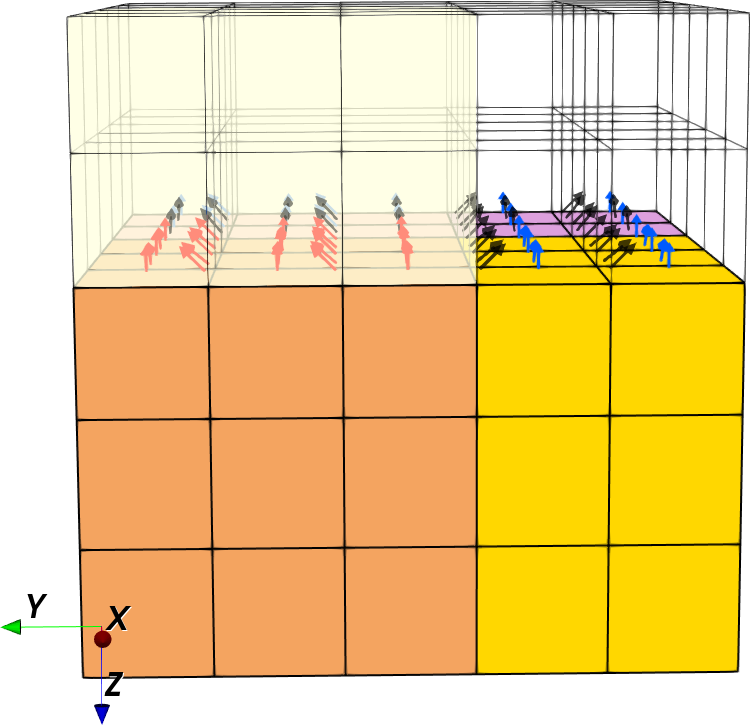}\\
		(b) Fifth sweep
	\end{minipage}
	\begin{minipage}[t]{\wdff\linewidth}
		\centering
		\includegraphics[width=0.9\textwidth]{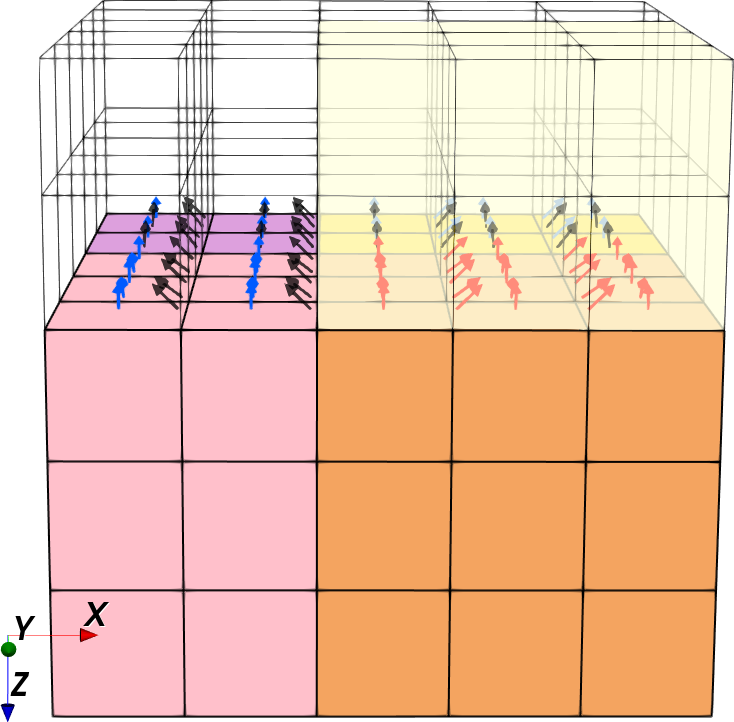}\\
		(c) Fifth sweep
	\end{minipage}
	
	\vspace{0.3cm}
	\begin{minipage}[t]{\wdff\linewidth}
		\centering
		\includegraphics[width=0.9\textwidth]{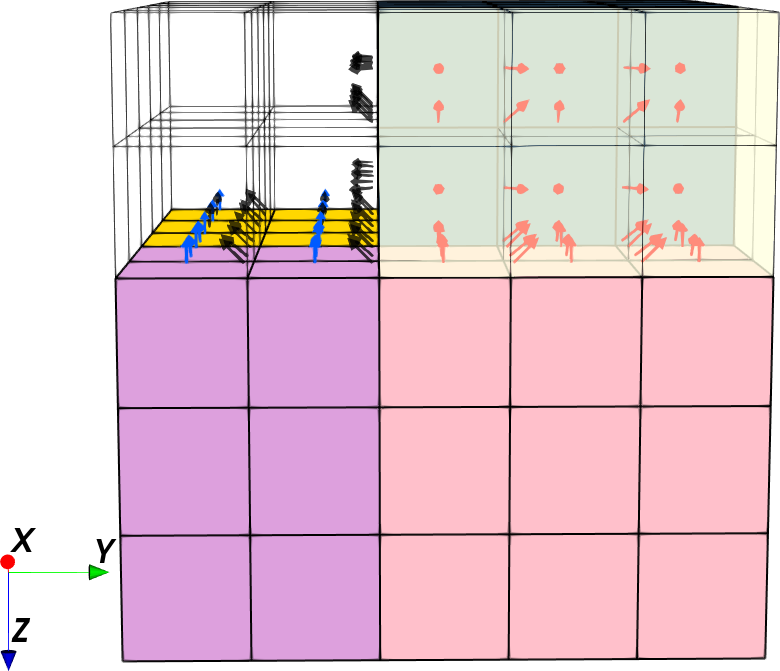}\\
		(d) Sixth sweep
	\end{minipage}
	\begin{minipage}[t]{\wdff\linewidth}
		\centering
		\includegraphics[width=0.9\textwidth]{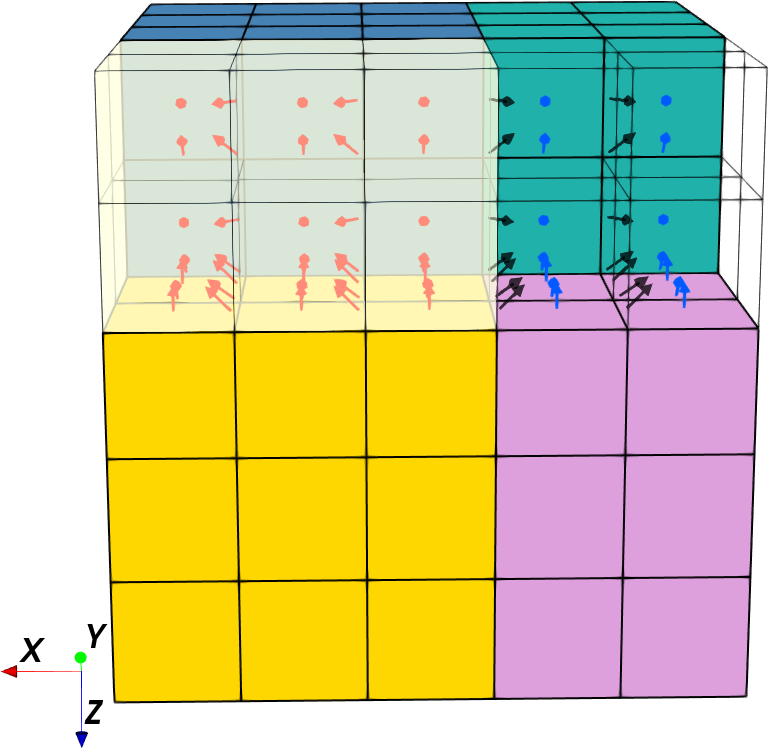}\\
		(e) Seventh sweep
	\end{minipage}
	\begin{minipage}[t]{\wdff\linewidth}
		\centering
		\includegraphics[width=0.9\textwidth]{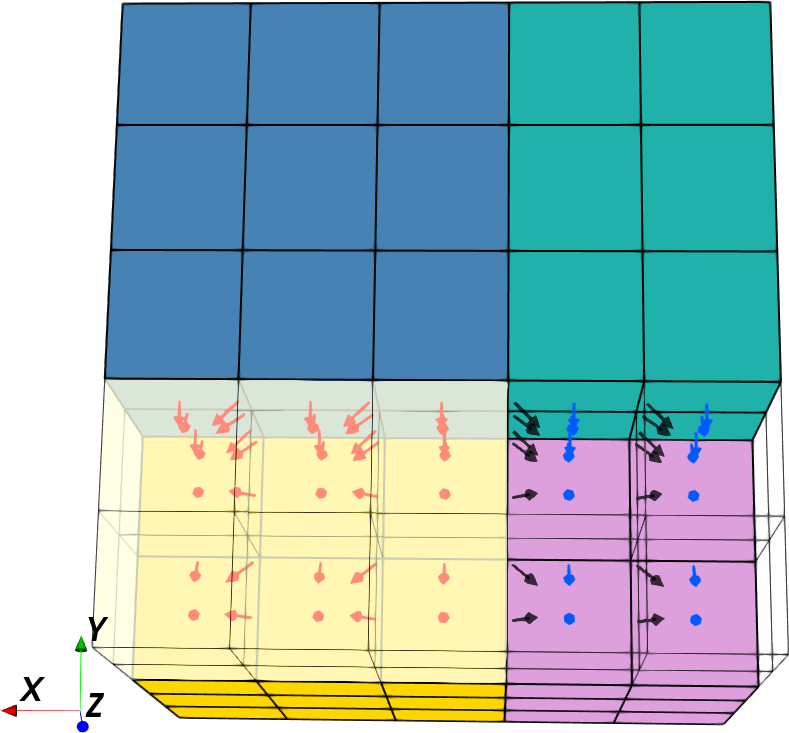}\\
		(f) Seventh sweep
	\end{minipage}
	\caption{
		Check the unused transferred sources  
	at the beginning of certain sweeps in the diagonal sweeping DDM in $\R^3$ using Lemma \ref{lemma:II}, by taking a different view of the third, fifth, sixth and seventh sweep of Figure 
	 \ref{fig:sweep3d_36}.	Note that the origin subdomain is $\Omega_{3,3,3}$.
		 \label{fig:sweep3d_op}}
\end{figure*}

\def\BrownOct{$\widetilde{\Omega}^{(+1,+1,+1)}$ (brown)}
\def\PinkOct{$\widetilde{\Omega}^{(-1,+1,+1)}$ (pink)}
\def\OringeOct{$\widetilde{\Omega}^{(+1,-1,+1)}$ (orange)}
\def\PurpleOct{$\widetilde{\Omega}^{(-1,-1,+1)}$ (purple)}
\def\BlueOct{$\widetilde{\Omega}^{(+1,+1,-1)}$ (blue)}
\def\GreenOct{$\widetilde{\Omega}^{(-1,+1,-1)}$ (green)}
\def\GrayOct{$\widetilde{\Omega}^{(+1,-1,-1)}$ (gray)}

In the {\bf third sweep} with direction $(+1,-1,+1)$, the octant to be solved is $\widetilde{\Omega}^{(+1,-1,+1)} =\Omega_{3,5;1,2;3,5}$, which has one solved face  neighbor  \BrownOct, as  shown in Figure \ref{fig:sweep3d_36}-(a). 
Out of the four faces of the two solved octants, one is shared, two are not in the similar direction, and the remaining one face  with the outer normal $(0,-1,0)$ of \PinkOct 
 { is} to be checked using Lemma \ref{lemma:II} as shown in Figure \ref{fig:sweep3d_op}-(a).
Under the $x$-$y$ plane projection, in the negative $x$-half plane we have the octant to be solved and the origin subdomain, while in the positive $x$-half plane we have the distance-2 solved octant \PinkOct { and} the face with the outer normal $(0,-1,0)$, thus the candidate transferred sources associated with the face are excluded using Lemma \ref{lemma:II}.
Therefore,  the solution in  the octant $\widetilde{\Omega}^{(+1,-1,+1)}$  could be constructed by this sweep.

In the {\bf fourth sweep} with direction $(-1,-1,+1)$, the octant  to be solved is $\widetilde{\Omega}^{(-1,-1,+1)} =\Omega_{1,2;1,2;3,5}$, which has two solved face  neighbor octants, \PinkOct \, and  \OringeOct  \, as  shown in Figure \ref{fig:sweep3d_36}-(b).
Out of the five faces of the three solved octants, two are shared and the rest three ones are not in the similar direction.

In the {\bf fifth sweep} with direction $(+1,+1,-1)$, the octant  to be solved is $\widetilde{\Omega}^{(+1,+1,-1)} =\Omega_{3,5;3,5;1,2}$, which  has one solved face  
neighbor  \BrownOct\, as  shown in Figure \ref{fig:sweep3d_36}-(c). 
Out of the four faces of the solved octants, one is shared, the rest three
are the faces with the outer  normal $(0,0,-1)$ of \PinkOct, \OringeOct \, and \PurpleOct, which are to be checked using Lemma \ref{lemma:II}.
Under the $y$-$z$ plane projection (checked in the positive and negative $z$-half planes), the candidate transferred source associated with the faces of \OringeOct \, and \PurpleOct \, are excluded as  shown in Figure \ref{fig:sweep3d_op}-(b).
Under the $x$-$z$ plane projection (checked in the positive and negative  $x$-half planes), the candidate transferred source associated with the faces of \PinkOct \, and \PurpleOct \, are excluded  as  shown in Figure \ref{fig:sweep3d_op}-(c).

In the {\bf sixth sweep} with direction$(-1,+1,-1)$, the octant to be solved is $\widetilde{\Omega}^{(-1,+1,-1)} =\Omega_{1,2;3,5;1,2}$, which has two solved face  neighbor octants, \BlueOct \, and \PinkOct \, as shown in Figure \ref{fig:sweep3d_36}-(d).
Out of the five faces of the solved octants, two are shared, two are not in the similar direction with the current sweep, and  the rest one face
is with the outer normal $(0,0,-1)$ of \PurpleOct, which is to be checked using Lemma \ref{lemma:II}.
Under the $y$-$z$ plane projection (checked in the positive and negative $y$-half planes), the candidate  transferred sources associated with the face of \PurpleOct \, are excluded as shown in Figure \ref{fig:sweep3d_op}-(d).

In the {\bf seventh sweep} with direction $(+1,-1,-1)$, the octant to be solved is $\widetilde{\Omega}^{(+1,-1,-1)} =\Omega_{3,5;1,2;1,2}$, which   has two solved face  neighbor octants, \BlueOct \, and \OringeOct \, as shown in Figure \ref{fig:sweep3d_36}-(e).
Out of the four faces of  the solved  octant, two are shared, and the rest two 
are the face with the outer normal $(0,0,-1)$ of \PurpleOct \, and the face with the outer normal $(0,-1,0)$ of  \GreenOct, which  are to be checked using Lemma \ref{lemma:II}.
Under the $x$-$z$ plane projection (checked in the positive and negative  $x$-half planes), the candidate transferred sources associated with the face of \PurpleOct \, are excluded as  shown in Figure \ref{fig:sweep3d_op}-(e). 
Under the $x$-$y$ plane projection  (checked in the positive and negative $x$-half planes), the candidate transferred sources associated with the face  \GreenOct \,  are excluded as shown in Figure \ref{fig:sweep3d_op}-(f).

In the {\bf eighth sweep} (also the last sweep) with direction $(-1,-1,-1)$, the octant to be solved is $\widetilde{\Omega}^{(-1,-1,-1)} =\Omega_{1,2;1,2;1,2}$, which has three face  neighbor octants, \PurpleOct, \GreenOct, and \GrayOct, as  shown in Figure \ref{fig:sweep3d_36}-(f). The octant to be solved has only shared faces, edges and vertices with solved octants. After the eight diagonal sweeps  the total solution is finally constructed.

 By extending  the  above process  to the case of $\Nbx \times \Nby\times \Nbz$ domain partition and  general source, we obtain the following result.

\begin{thm}{} \label{thm:sweep3d}
  The DDM solution $u_{\text{DDM}}$   produced by Algorithm \ref{alg:diag3D} is indeed the solution of the problem $P_{\Omega}$ in $\R^3$ in the constant medium case.
\end{thm}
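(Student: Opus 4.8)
The plan is to lift the $5\times5\times5$ walkthrough that precedes the statement to an arbitrary partition, after first reducing to a single-subdomain source. Since every operator $\L_{i,j,k}$ and every transfer map $\Psi_{\Box,\vartriangle,\ocircle;i,j,k}$ is linear, and since the routing logic of Algorithm~\ref{alg:diag3D} --- the sweep order~\eqref{Sorder3D} together with Rules~\ref{rule3d_a}--\ref{rule3d_b} --- refers only to the \emph{directions} of sweeps and of transferred sources and never to where $\supp f$ sits, it suffices to prove $u_{\text{DDM}}=u$ when $\supp f\subset\Omega_{i_0,j_0,k_0}$ for one fixed origin subdomain; the general $f=\sum f_{i,j,k}$ then follows by superposition, exactly as for Theorem~\ref{thm:sweep2d} in $\R^2$. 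I would then run the whole argument as a comparison with the additive overlapping DDM of Algorithm~\ref{alg:add3D}, for which $u_{\text{DDM}}=u$, together with its marching property --- $\Omega_{i,j,k}$ performs a single nonzero solve at step $|i-i_0|+|j-j_0|+|k-k_0|+1$ and there reconstructs $u$ on $\widetilde\Omega_{i,j,k}$, by iterated use of the source-transfer Lemma~\ref{lemma:src_trans} --- both already available from \cite{Leng2019}.

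The heart of the matter is to show that Algorithm~\ref{alg:diag3D} performs the \emph{same} local solves and source transfers as Algorithm~\ref{alg:add3D}, only re-indexed by the octant a subdomain lies in. The eight extended octants $\widetilde\Omega^{(\dirThreeD)}$ of~\eqref{octa3D} partition the domain: $\Omega_{i,j,k}$ lies in $\widetilde\Omega^{(+1,+1,+1)}$ when $i\ge i_0,\ j\ge j_0,\ k\ge k_0$ and in the other octants accordingly, so the origin subdomain lies in $\widetilde\Omega^{(+1,+1,+1)}$. I would prove by induction on the sweep index $l=1,\dots,8$ the invariant: \emph{at the start of sweep $l$, each subdomain of $\widetilde\Omega^{(\d_l)}$ has accumulated in $r^{l}_{i,j,k}$ precisely the total local source that $\Omega_{i,j,k}$ receives in Algorithm~\ref{alg:add3D}, and each subdomain of a not-yet-swept octant holds in its future local sources exactly the partial sum of that additive source contributed by the octants already swept.} Granting the invariant, the step-by-step solve within sweep $l$ respects the dependency order inside $\widetilde\Omega^{(\d_l)}$, hence reproduces the additive local solves there and reconstructs $u$ on $\widetilde\Omega^{(\d_l)}$ via Lemma~\ref{lemma:src_trans}; after the eighth sweep $u_{\text{DDM}}=\sum_{l,i,j,k}\beta_{0,0,0;i,j,k}u^{l}_{i,j,k}=\bigl(\sum_{i,j,k}\beta_{0,0,0;i,j,k}\bigr)u=u$, the telescoping of the cutoff partition of unity being the same as for Algorithm~\ref{alg:add3D}.

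The inductive step amounts to verifying that the routing delivers every transferred source to exactly the right sweep, which is what Lemmas~\ref{lemma:candidate}--\ref{lemma:IV} are built for. When the octant $\widetilde\Omega^{(\d_l)}$ is reached, Lemma~\ref{lemma:candidate} isolates the transfers it genuinely needs --- those on the shared faces of solved face-neighbour octants, the shared edges of solved distance-$2$ octants, and the shared vertex of a solved opposite octant; Lemmas~\ref{lemma:I}, \ref{lemma:III} and~\ref{lemma:IV} confirm that each of these survives Rule~\ref{rule3d_b} (the relevant projected directions are never simultaneously opposite and singly degenerate); the candidate transfers not among those needed are shown, via Lemma~\ref{lemma:II}, to be suppressed by Rule~\ref{rule3d_b}; and every transfer not in the similar direction of $\d_l$ is suppressed by Rule~\ref{rule3d_a}. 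To upgrade the $5^3$ walkthrough to a proof I would check that, for each of the eight sweeps, every face of every already-solved octant falls into exactly one of three classes --- shared with the current octant, not in the similar direction of $\d_l$, or configured relative to the origin subdomain so that Lemma~\ref{lemma:II} forces its exclusion --- together with the easier analogous trichotomy for edges and vertices. Which octant is a face, edge or opposite neighbour of which, as well as this trichotomy, are purely combinatorial facts about sign vectors, independent of $\Nbx,\Nby,\Nbz$ (with the convention that an octant is empty when the corresponding index of $\Omega_{i_0,j_0,k_0}$ is extremal), so the verification carries over verbatim from the $5\times5\times5$ case.

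I expect the genuine obstacle to be establishing this exhaustiveness --- equivalently, that the chosen sweep order~\eqref{Sorder3D} is such that for every non-shared face of an already-solved octant there truly is a coordinate plane in which the octant being solved and the origin subdomain lie in one half while the solved octant and the face lie in the other, so that Lemma~\ref{lemma:II} applies. Carrying this out over all eight sweeps and all four configurations of solved face-neighbours (zero, one, two, three; see Figure~\ref{fig:oct_solve_type}) is the bookkeeping core of the argument; once it is done, the remaining points --- that the routing's choice of the smallest admissible sweep index $l'\ge l$ is always well defined, and that no needed transfer is ever deferred past the sweep that consumes it --- follow from Lemmas~\ref{lemma:I}--\ref{lemma:IV} with no further work.
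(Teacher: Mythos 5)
Your proposal follows essentially the same route as the paper: reduce to a source supported in a single subdomain by linearity, organize the construction octant-by-octant over the eight sweeps, use Lemmas~\ref{lemma:candidate}, \ref{lemma:I}, \ref{lemma:III} and~\ref{lemma:IV} to confirm that the needed candidate transferred sources survive Rules~\ref{rule3d_a}--\ref{rule3d_b}, and use Lemma~\ref{lemma:II} to exclude the remaining ones, with the sweep-by-sweep trichotomy check (shared face / not similar / excluded by Lemma~\ref{lemma:II}) being exactly the bookkeeping the paper carries out for the $5\times5\times5$ illustration before asserting the general case. Your explicit induction invariant tying each sweep's accumulated local sources to those of the additive DDM of Algorithm~\ref{alg:add3D} is a slightly more formal packaging of the same argument, not a different one.
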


\begin{rem}
The diagonal sweeping DDM could be used as a preconditioner for Krylov subspace methods such as GMRES when solving the discrete system of the Helmholtz equation. 
Let us denote by $n_{\text{iter}}$ the needed number of iterations  for the relative residual to reach certain tolerance. Assume that the size of the subdomain problem is fixed, then 
the complexity of the factorization and solving one subdomain problem becomes $O(1)$, then the total complexity of solving one RHS is $O(N  n_{\text{iter}})$, where $N$ is the size of the discrete system.  Through the numerical experiments presented in Section \ref{numexp}, we demonstrate $n_{\text{iter}} \sim O(\log N)$, thus 
the total complexity of solving one RHS is $O(N \log N)$ by using the proposed diagonal sweeping DDM as the preconditioner. 
\end{rem}

\begin{rem}
   The proposed diagonal sweeping DDM is very suitable for parallel solution of the Helmholtz problem with multiple RHSs 
  in many practical applications, such as seismic imaging and electromagnetic scattering. Taking the full wave inversion (FWI) in seismic imaging as an example, it is a large scale nonlinear optimization problem aimed at solving the subsurface geophysical parameters. In one optimization step, a forward and adjoint wavefield modeling needs to be solved for each shot, which is then used to calculate the gradient of the misfit between the observed and modeled seismograms. There are usually hundreds of shots and all the shots are independent of each other, thus the wavefield modeling problem is indeed a problem with multiple RHSs.
  
We can use the pipeline technique to parallelize the proposed DDM for solving such problem and obtain good scalability.
Suppose that the number of cores to be used is equal to the number of subdomains, and since the subdomains are solved in different orders for different sweeps, in order to keep the solving order of cores the same in the pipeline, each core is assigned to solve one of $2^n$ pre-assigned subdomains in each of the total $2^n$ sweep.  Let us take the 3D case for illustration. 
There are $\Nbx \times \Nby \times \Nbz$ subdomains and cores, and
the subdomains $\Omega_{i',j',k'}$,
where $i' = i, \Nbx+1-i$, $j' = j, \Nby+1-j$, $k'=k, \Nbz+1-k$,
are assigned to the core of rank $((i'-1) \Nby + j'-1)\Nbz + k'$, and the solving order of cores is kept the same as the first sweep.
The pipeline overhead time, which is time that all cores begin to work, is $(\Nbx+\Nby+\Nbz-2) T_{0} $, where $T_{0}$ is the time for solving one subdomain problem. 
Denote the number of RHSs by  $N_{\text{RHS}}$ and assume it is a multiple of
$\Nbx+\Nby+\Nbz-2$,  then the total time cost of solving all RHSs using the pipeline is
\begin{align*}
  \left(\Nbx+\Nby+\Nbz-2 \right)T_0 + 8 n_{\text{iter}} N_{\text{RHS}}  T_0,  
\end{align*}  
thus the average solving time for one RHS is
\begin{equation}\label{eq:ave_diag}
  8 n_{\text{iter}} T_0 + \frac{\Nbx+\Nby+\Nbz-2}{N_{\text{RHS}}}  T_0.
\end{equation}
The idle of the cores at the beginning the pipeline only cause
the average solving time to increase by a neglectable factor, e.g.,
when $N_{\text{RHS}} = 2 (\Nbx+\Nby+\Nbz-2) $ and
$n_{\text{iter}} = 10$ (which is very common in real applications), the
idle of the cores only increases the average solving time by
$0.625 \%$.

With the similar pipeline setup, the recursive sweeping DDM \cite{Liu2015b,Wu2015}
has the average solving time of one RHS as 
\begin{equation} \label{eq:ave_recur}
8 n_{\text{iter}} T_{0} +  \frac{ \Nbx \Nby \Nbz }{N_{\text{RHS}} } T_0,
\end{equation}
under the condition that $N_{\text{RHS}}$ is a multiple of
$\Nbx \Nby \Nbz$, which is very hard to satisfy for real
applications since $\Nbx \Nby \Nbz$ is often larger than $N_{\text{RHS}}$. What is more, comparing the average solving
time \eqref{eq:ave_diag} and \eqref{eq:ave_recur}, our diagonal
sweeping DDM is clearly much more efficient and scalable when
$\Nbx\approx \Nby\approx \Nbz$.

The parallelization of the L-sweeps method \cite{Zepeda2019} 
adopts another way that suits better solving one RHS. Each row of the
subdomains of the checkboard domain decomposition is assigned with
one core, thus only a total of $\Nbx$ cores are used in the
computation and each core handles the corresponding subdomains
during one sweep of the L-sweeps method.
Using $\Nbx$ cores to solve problem of $\Nbx \times \Nby$ subdomains
implies that the problem size per core grows as $\Nby$ increases, thus
the parallelization is not weak scalable, thus not suitable for large
problems with many subdomains.

\end{rem}

\section{Numerical experiments}\label{numexp}

The proposed diagonal sweeping DDM  with source  transfer (Algorithms \ref{alg:diag2D}  in $\R^2$ and \ref{alg:diag3D}  in $\R^3$) will be tested with various experiments
 to demonstrate its  performance for numerically  solving the Helmholtz problem  \eqref{eq:helm}, especially with high frequency.   
First,  the convergence of the proposed method will be tested.  In the  constant medium case, the  discrete  DDM solution is an excellent 
approximation to the continuous  Helmholtz problem, and the total error of the  approximation comes from the spatial numerical discretization and the  truncation of PML.
By choosing appropriately the PML medium parameters, including the PML width and the absorbing parameter $\widehat{\sigma}$,  the total error is expected to be dominated by the  spatial discretization error.
Note that although only the solving orders are different in the diagonal sweeping DDM and the additive overlapping DDM \cite{Leng2019}, the errors coming from the truncation of PML in two methods are not the same, hence the convergence of the diagonal sweeping method still needs to be tested.
Second, the proposed method will be tested as the preconditioner for the GMRES method to solve the global discrete system since the discrete DDM solution  is  an approximation to the discrete Helmholtz problem in general. Many factors affect this approximation, including the truncation of PML, the reflections in the medium and the discretization, etc. 
The performance of the algorithm is tested with constant medium problem, layered media problems, and a more realistic problem (the 2004 BP model), to demonstrate the great potentials of the proposed diagonal sweeping DDM.

In all the numerical experiments,  the Helmholtz equation is discretized on structured meshes with the second-order central finite difference scheme, which is  a five-points  stencil in two dimensions and a seven-points stencil  in three dimensions, respectively.
The diagonal sweeping DDM algorithms are implemented in parallel using Message Passing Interface (MPI) and the local subdomain problems   are solved with the direct solver ``MUMPS'' \cite{MUMPS}. 
The supercomputer ``LSSC-IV'', located in State Key Laboratory of Scientific and Engineering Computing, Chinese Academy of Sciences,  is used for all numerical tests, which has a total of 408 nodes and each node  has two 2.3GHz Xeon Gold 6140 processors (18 cores and 
192G memory).  The number of cores is always chosen to be equal to the number of the subdomains.

\subsection{Convergence tests of the discrete DDM solutions}
In this subsection, the convergence of the diagonal sweeping DDM   is tested for the constant medium Helmholtz problems,
where the wave number and the number of subdomains are both  fixed while the mesh resolution is uniformly increased.

\subsubsection{2D constant medium problem}

In this example, a Helmholtz problem in $\R^2$  with a constant wave number
$\kappa/2\pi = 25$ is solved using  Algorithm \ref{alg:diag2D}.
The computational domain is $B_L=[-L,L]^2$ with $L = 1/2$, and the interior domain without PML is
$B_l=[-l, l]^2$ with $l = 25/56$.
Denote by $q$ the mesh density, which is defined to be  the number of nodes per wave length,
a series of uniformly refined meshes are used, where the mesh density increases
approximately from $22$ to $270$.
A $5 \times 5$ domain partition is used for all  meshes.
The source is chosen as  
\begin{equation*}
f(x_1, x_2) = \frac{16 \kappa^2}{\pi^3}  e^{- (\frac{4 \kappa}{\pi})^2 ((x_1-r_1)^2+(x_2-r_2)^2) },
\end{equation*}
where $(r_1, r_2) = (0.09, 0.268)$, whose support mostly lies in four subdomains,
$\Omega_{3,4}$, $\Omega_{3,5}$, $\Omega_{4,4}$ and $\Omega_{4,5}$.
The results on the errors and convergence rates of the discrete DDM solutions for this 2D problem  are shown in Table \ref{tab:num2D}.
The optimal  convergences (order 2) of the errors measured by both $L^2$ and $H^1$ norms along the refinement of the meshes are  obtained, which also demonstrate that the total errors are indeed dominated by the finite difference discretization errors in this test as expected. 

\begin{table}[!ht]
	\centering
	\begin{tabular}{|c|c|r|c|r|c|}
		\hline
		Mesh     & Local Size  & \mR{$L^2$ Error}     & Conv. & \mR{$H^1$  Error}    & Conv. \\
		Size     & without PML &                      & Rate  &                      & Rate  \\
		\hline\hline
		560$^2$  & 100$^2$     & 3.13$\times 10^{-3}$ &       & 4.89$\times 10^{-1}$ &       \\
		1120$^2$ & 200$^2$     & 7.78$\times 10^{-4}$ & 2.0   & 1.22$\times 10^{-1}$ & 2.0   \\
		2240$^2$ & 400$^2$     & 1.94$\times 10^{-4}$ & 2.0   & 3.04$\times 10^{-2}$ & 2.0   \\
		4480$^2$ & 800$^2$     & 4.86$\times 10^{-5}$ & 2.0   & 6.64$\times 10^{-3}$ & 2.0   \\
		6720$^2$ & 1200$^2$    & 2.17$\times 10^{-5}$ & 2.0   & 3.44$\times 10^{-3}$ & 2.0   \\
		\hline
	\end{tabular}
	\caption{The errors and convergence rates of the numerical solutions obtained by Algorithm \ref{alg:diag2D}  for the 2D constant medium problem.} \label{tab:num2D}
\end{table}

\subsubsection{3D constant medium problem}

Next a Helmholtz problem in $\R^3$ with a constant wave number
$\kappa/2\pi = 10$ is solved using  Algorithm \ref{alg:diag3D}.
The computational domain is $B_L=[-L,L]^3$ with $L = 1/2$, and the interior domain without PML is 
$B_l=[-l,l]^3$ with $l = 3 / 8$.
A series of refined meshes are used, where the mesh density increases
approximately from $8$ to $16$.
A $3 \times 3 \times 3$ domain partition is used for all  meshes.
The source is chosen as  
\begin{equation*}
f(x_1, x_2, x_3) = \frac{64 \kappa^3}{\pi^{9/2}}  e^{- (\frac{4 \kappa}{\pi})^2 ((x_1 -r_1)^2+(x_2 -r_2)^2+(x_3 -r_3)^2 ) },
\end{equation*}
where $(r_1, r_2, r_3) = (0.12, 0.133, 0.125)$, whose support mostly lies in  eight subdomains,
$\Omega_{i,j,k}$'s with $i=2,3$, $j=2,3$, $k=2,3$.
The results on the errors and convergence rates of the discrete DDM solutions for this 3D problem  are reported in Table \ref{tab:num3D}. 
Similar to in the $\R^2$ case, the optimal second order convergences for both $L^2$ and $H^1$ errors are obtained as expected, which demonstrate again that the total errors are indeed dominated by the finite difference discretization errors.

\begin{table}[!ht]
	\centering
	\begin{tabular}{|c|c|r|c|l|c|}
		\hline
		Mesh  & Local Size & \mR{$L^2$ Error}               & Conv. & \mR{$H^1$ Error}            & Conv. \\
		Size& without PML &                     &   Rate    &                      &  Rate     \\
		\hline     \hline                                                                                          
		
		80$^3$  & 20$^3$ & 2.67$\times 10^{-2}$ &     & 1.62$\times 10^{0}$  &     \\
		96$^3$  & 24$^3$ & 1.82$\times 10^{-2}$ & 2.1 & 1.12$\times 10^{0}$  & 2.0 \\
		128$^3$ & 32$^3$ & 1.00$\times 10^{-3}$ & 2.1 & 6.20$\times 10^{-1}$ & 2.0 \\
		160$^3$ & 40$^3$ & 6.49$\times 10^{-3}$ & 2.0 & 4.04$\times 10^{-1}$ & 2.0 \\
		\hline
	\end{tabular}
	\caption{The errors and convergence rates of the numerical solutions obtained by Algorithm \ref{alg:diag3D}  for the 3D constant medium problem.} \label{tab:num3D}
\end{table}

\subsection{Performance tests with the DDM solutions as the preconditioner} 

The DDM solutions for the  constant medium Helmholtz problem
could be used as the preconditioner for solution of the global discrete systems that arise from 
discretization of  the Helmholtz equation. 
In particular, we will demonstrate
the effectiveness and efficiency of such preconditioner to the GMRES solver for both constant and layered media problems.
In each GMRES iteration, one preconditioner solving is performed in which 4 diagonal sweeps 
are carried out with  $\Nbx+\Nby-1$ steps in each step in $\R^2$ or  8 diagonal sweeps with  $\Nbx+\Nby+\Nbz-2$ steps  in each sweep in $\R^3$.
In the following tests, the stopping criterion is set to be that the relative residual reaches a tolerance of  10$^{-6}$.

\subsubsection{2D constant medium problem}

Algorithm \ref{alg:diag2D} as the preconditioner is tested for a constant medium problem on the 
square domain $[0,1]^2$ with different frequencies. 
Four shots located at $(x^s_1,x^s_2) = (1/4,1/4)$, $(1/4,3/4)$, $(3/4,1/4)$ and $(3/4,3/4)$, for $s=1,\ldots,4$, are taken as the source,
and the shape of each shot is an approximated $\delta$ function, for instance,
\begin{equation} \label{eq:four_src_2}
\DD f(x_1, x_2) = \sum\limits_{\substack{i = 1 \ldots N_x\\ j=1 \ldots N_y}}  \sum\limits_{s=1,\ldots,4} \frac{1}{h_1 h_2}\delta(x_1^s - i h_2) \delta (x_2^s - j h_2 ),
\end{equation}
where $h_{1}$ and $h_{2}$ are the grid spacing in $x$ and $y$ directions, respectively.
The size of the subdomain problems without PML layer is fixed to be $300 \times 300$, while the number of subdomains ($\Nbx\times\Nby$) and the frequency simultaneously  increases. The PML layer is of 30 grid points, which is approximately 2.5 wave length. 
The results on the numbers of GMRES iterations (denoted by $n_{\text{iter}}$)  and the running times are shown in Table \ref{tab:iter_const}, {where $T_{\text{it}}$ denotes the total time measured in seconds.}
As we can see,  $n_{\text{iter}}$ grows as the number of the subdomains grows, and roughly, $n_{\text{iter}}$ is proportional to  $\log(\omega)$ or $\log(N)$.
  We mainly focus on the iteration number in this test and leave the pipeline tests for multiple RHSs  to some of later experiments.

\begin{table}[ht!]
	\centering
	\begin{tabular}{|r|c|r|r|r|r|r|}
		\hline
		\mC{Mesh} & \mR{$\Nbx \times \Nby$} & Freq.                & GMRES                  & \mR{$T_{\text{it}}$} \\
                \mC{Size} &                         & \mC{$\omega / 2\pi$} & \mC{$n_{\text{iter}}$} &                      \\
                \hline
                \hline  
		600$^2$   & 2 $\times$ 2   & 55   & 2 & 22   \\
		1200$^2$  & 4 $\times$ 4   & 105  & 2 & 52   \\
		2400$^2$  & 8 $\times$ 8   & 205  & 3 & 130  \\
		4800$^2$  & 16 $\times$ 16 & 405  & 3 & 268  \\
		9600$^2$  & 32 $\times$ 32 & 805  & 4 & 759  \\
		14400$^2$ & 48 $\times$ 48 & 1205 & 5 & 1429 \\
		\hline
	\end{tabular}
	\caption{The performance of Algorithm \ref{alg:diag2D} as the preconditioner  for the 2D constant medium problem
		with the subdomain problem size being fixed.} \label{tab:iter_const}
\end{table}

\subsubsection{2D Layered media problems and discussions}

The layered media problem is of particular interest, since it is the  common case in the reflection seismology. We first demonstrate how the algorithm handles the reflections in the medium.  Let us consider the simplest case of two-layered media problem in $\R^2$, for instance, a region with two media is partitioned into $2 \times 2$ subdomains and the interface of the two media is in the upper-half of the region, say, $\Omega_{1,2}$ and $\Omega_{2,2}$, as  shown in Figure \ref{fig:two_three_layers}-(left).
The wave solution to this problem contains the wave generated by the source in one medium, the reflection in the same medium and the refraction in the other medium.

\begin{figure*}[!ht]	
\centerline{
		\includegraphics[width=0.4\textwidth]{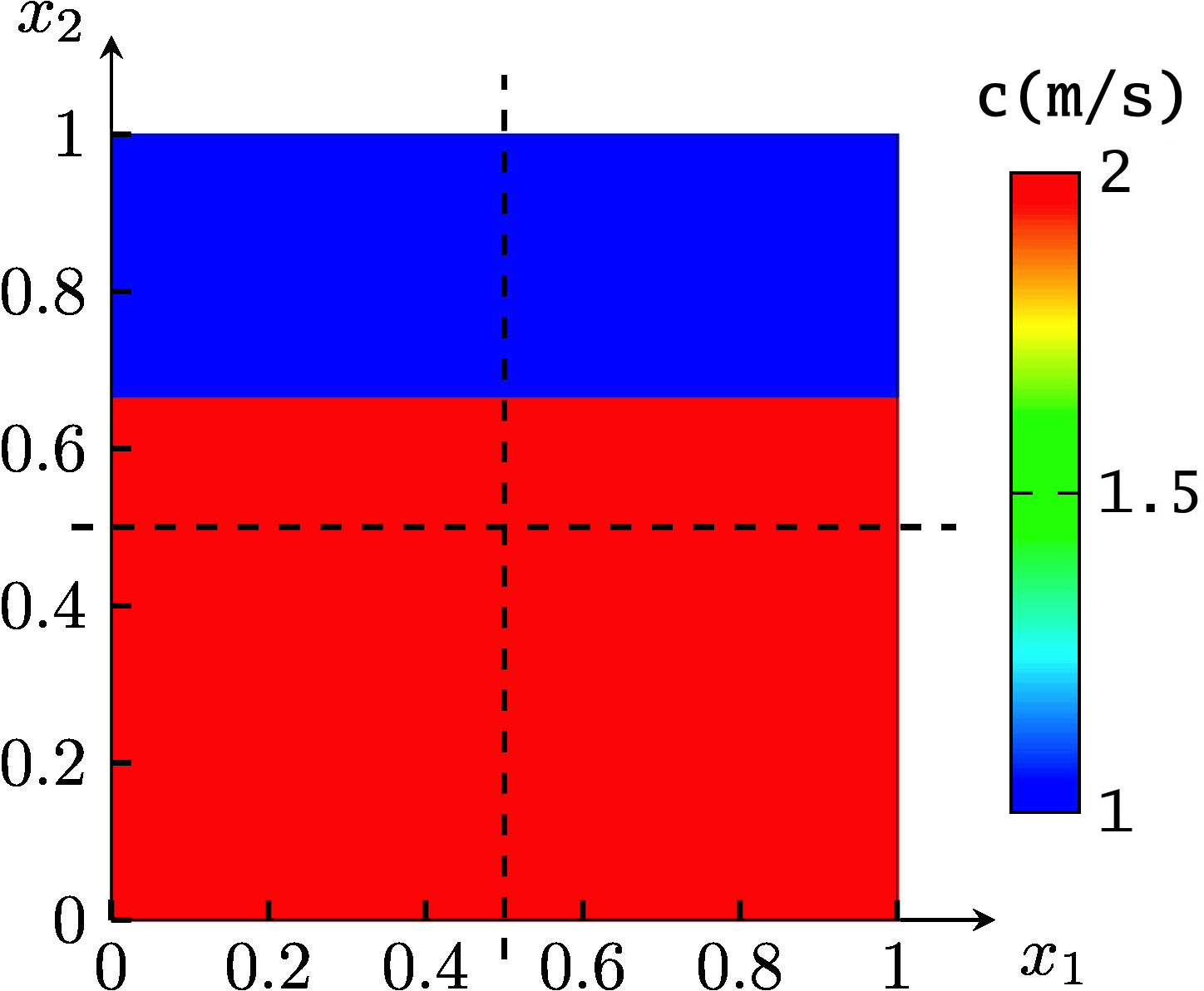}\hspace{0.5cm}
		\includegraphics[width=0.4\textwidth]{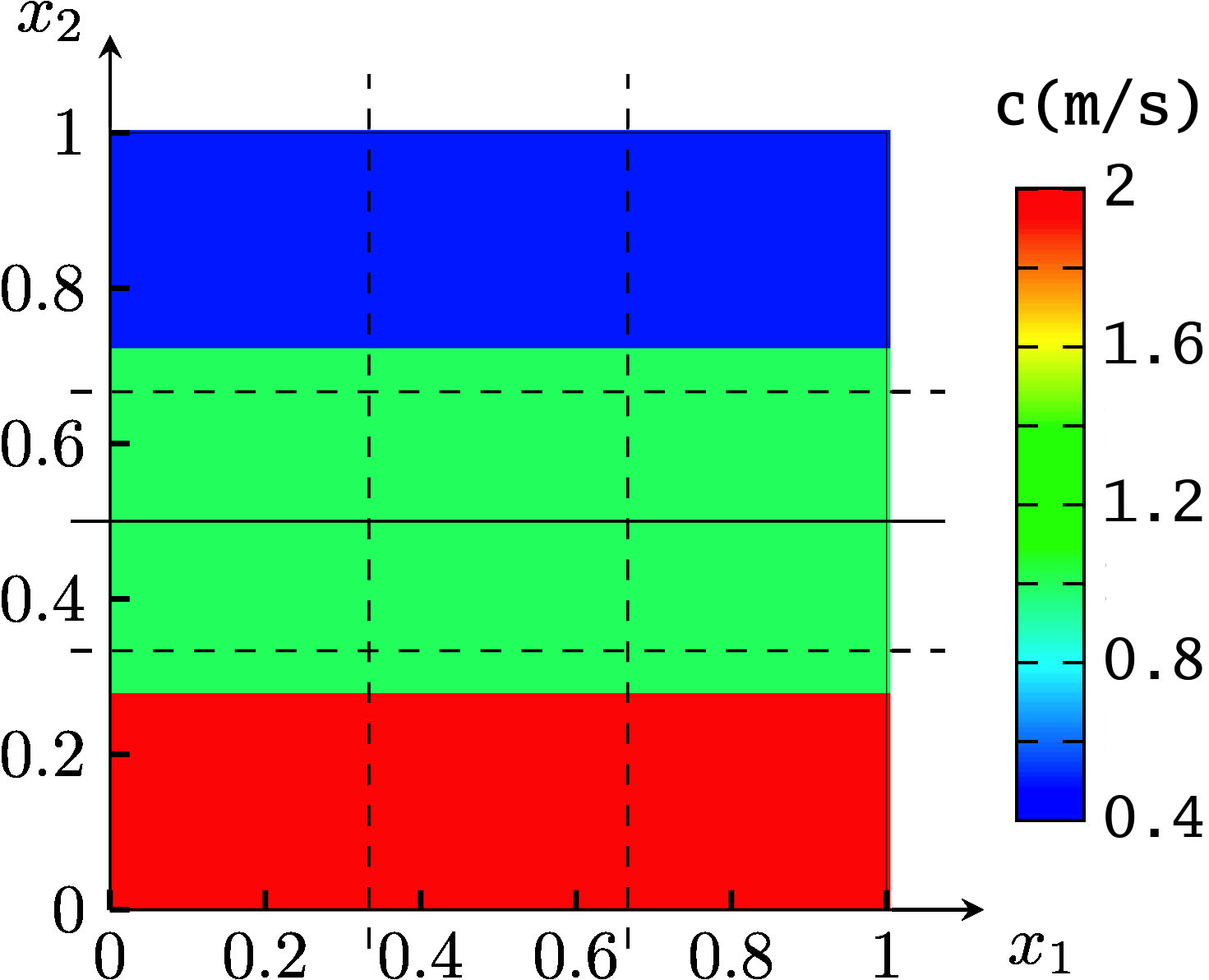}
		}
	\caption{Velocity profiles of the 2D two-layered media  (left) and  three-layered media (right) problems. The $2\times2$ partition is illustrated with the dotted line in the left figure.  The $1\times2$ and $3\times3$ partition are illustrated with the solid and dotted lines respectively in the right figure.  
		\label{fig:two_three_layers}}
\end{figure*}

  In the case of $2\times 2$ partition, the solution always could  be obtained in one iteration (four sweeps) by Algorithm \ref{alg:diag2D} when the source lies in any of the four subdomains.
  Suppose that the source lies in the subdomain $\Omega_{2,1}$, in the first sweep the solution in $\Omega_{2,2}$ is obtained and the reflection in $\Omega_{2,1}$ is missing as  shown  Figure \ref{fig:reflect_2x2}-(a). In the second sweep, the solution in $\Omega_{1,2}$ is obtained, and the reflection in $\Omega_{1,1}$ is missing as  shown in Figure \ref{fig:reflect_2x2}-(b). The third sweep brings the missing reflections to $\Omega_{1,2}$ and the fourth sweep brings the missing reflections to $\Omega_{1,1}$ as  shown in Figures \ref{fig:reflect_2x2}-(c) to (d). Thus the solution with the reflections in the whole domain is obtained in one iteration.

\def\wdff{0.24}
\def\hdff{0.1cm}
\begin{figure*}[!ht]	
	\centering
	\begin{minipage}[t]{\wdff\linewidth}
		\centering
		\includegraphics[width=1\textwidth]{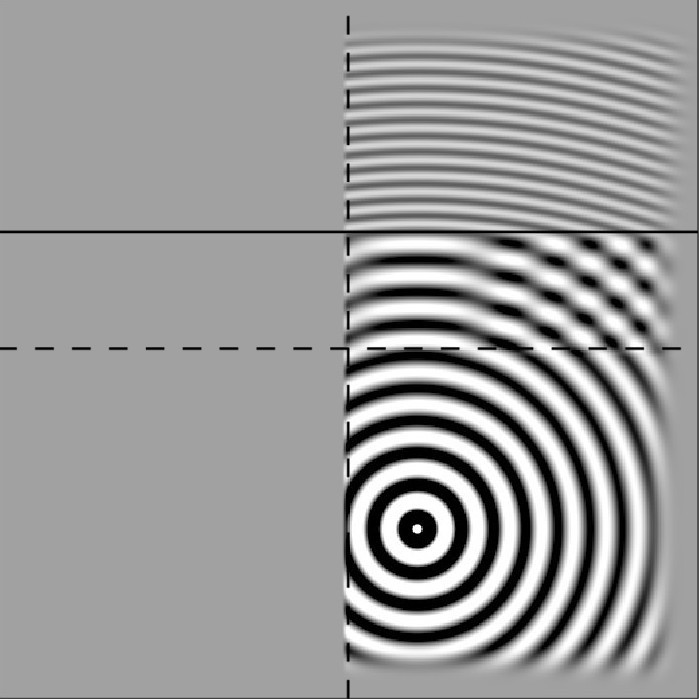}\\
		(a) First sweep $\nearrow$
	\end{minipage}
	\begin{minipage}[t]{\wdff\linewidth}
		\centering
		\includegraphics[width=1\textwidth]{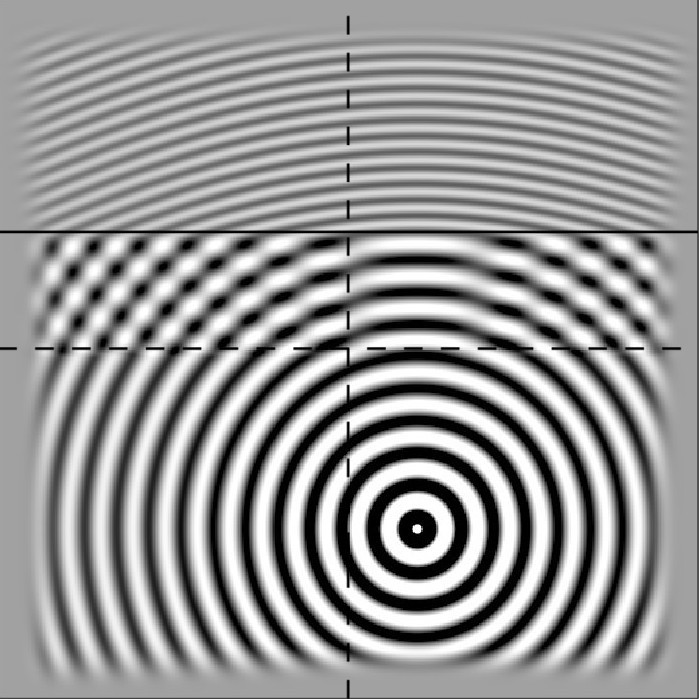}\\
		(b) Second sweep  $\nwarrow$
	\end{minipage}
	\begin{minipage}[t]{\wdff\linewidth}
		\centering
		\includegraphics[width=1\textwidth]{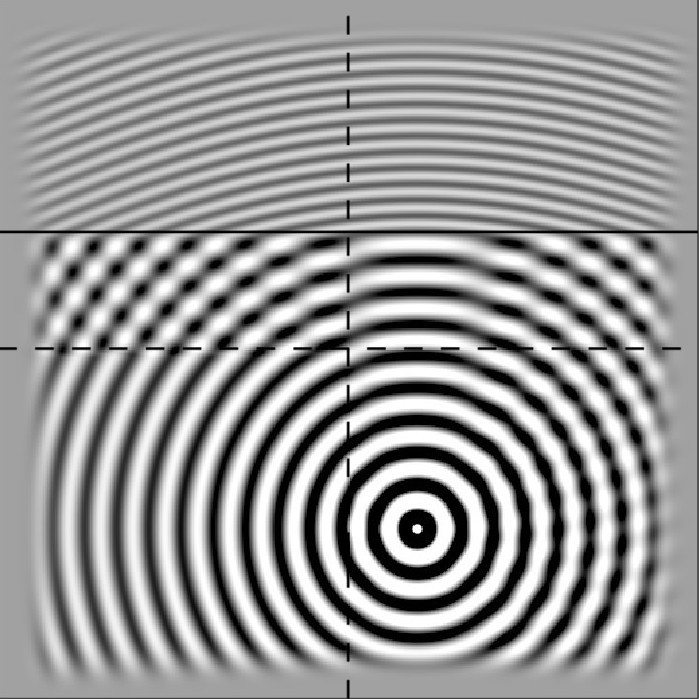}\\
		(c) Third sweep $\searrow$
	\end{minipage}
	\begin{minipage}[t]{\wdff\linewidth}
		\centering
		\includegraphics[width=1\textwidth]{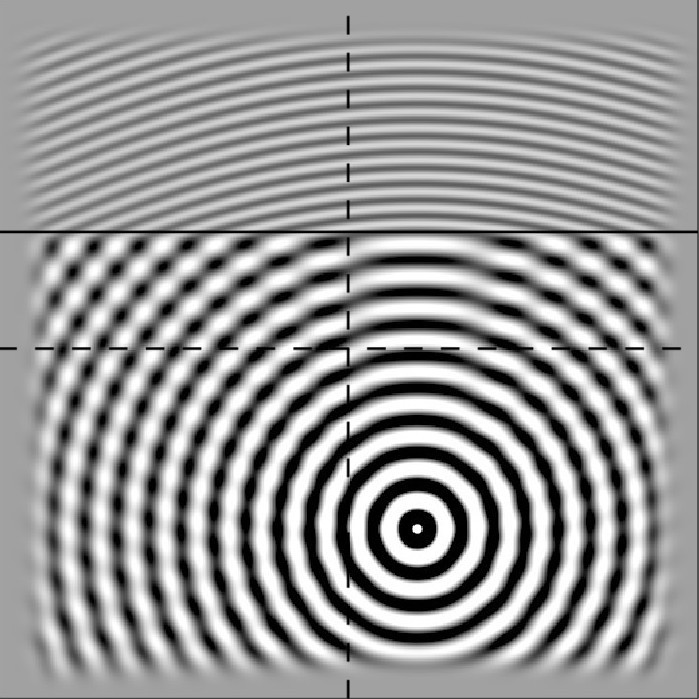}\\
		(d) Fourth sweep $\swarrow$
	\end{minipage}
	\caption{Solution after sweeps for the $2\times 2$ partition with the source lying in $\Omega_{2,1}$. The arrows in the captions denote the sweeping directions. \label{fig:reflect_2x2}}
\end{figure*}

  However, for the general partition $\Nbx \times \Nby$, in the case that the source lies above the interface of the two media, two iterations (eight sweeps) are needed to produce the solution with the reflections in the whole domain by Algorithm  \ref{alg:diag2D}.
Nevertheless, for the general case of multi-layered media, since the reflections are traveling back and forth in the layers, the effect of the source location to the algorithm is expected to be negligible, which will be shown by the next test.

The three-layered media case has been widely used for many DDMs to illustrate that the residual decay rate is controlled by the medium properties  \cite{Leng2015}.
For instance, a square region $[0,1]^2$ with three-layered media is partitioned into two subdomains 
and the subdomain interface lies in the middle layer as  shown in Figure  \ref{fig:two_three_layers}-(right).
Then a series of reflections occur in the middle layer at the upper and lower medium interface during the DDM iterations, just as the time domain wave traveling. The maximum residual decay rate is related to the reflection rates at the medium interfaces. Thus we define the residual decay rate per iteration  of two subdomain partition as the optimal residual decay rate, where two reflections take place   in one iteration  consists an upward and a downward sweeping. A $3\times 3$ subdomain partition is used to test Algorithm \ref{alg:diag2D} for the three-layered media problem, and a shot located at $(1/4\Delta \xi, 1/3 \Delta \eta)$ is used as the source.   As discussed in the two-layered media case, in one iteration of the algorithm, an effective upward sweeping and a downward sweeping are performed, thus the optimal residual decay rate per iteration is expected to be achieved and the results shown in  Figure \ref{fig:3layers} verify that  it is indeed obtained
(note that in order to remove the influence of Krylov space correction, we use the DDM algorithm as an iterative solver rather than a preconditioner  in this test).

\def\wdff{1}
\begin{figure*}[!ht]	
	\centering
		\begin{minipage}[t]{\wdff\linewidth}
			\centering
			\includegraphics[width=0.6\textwidth]{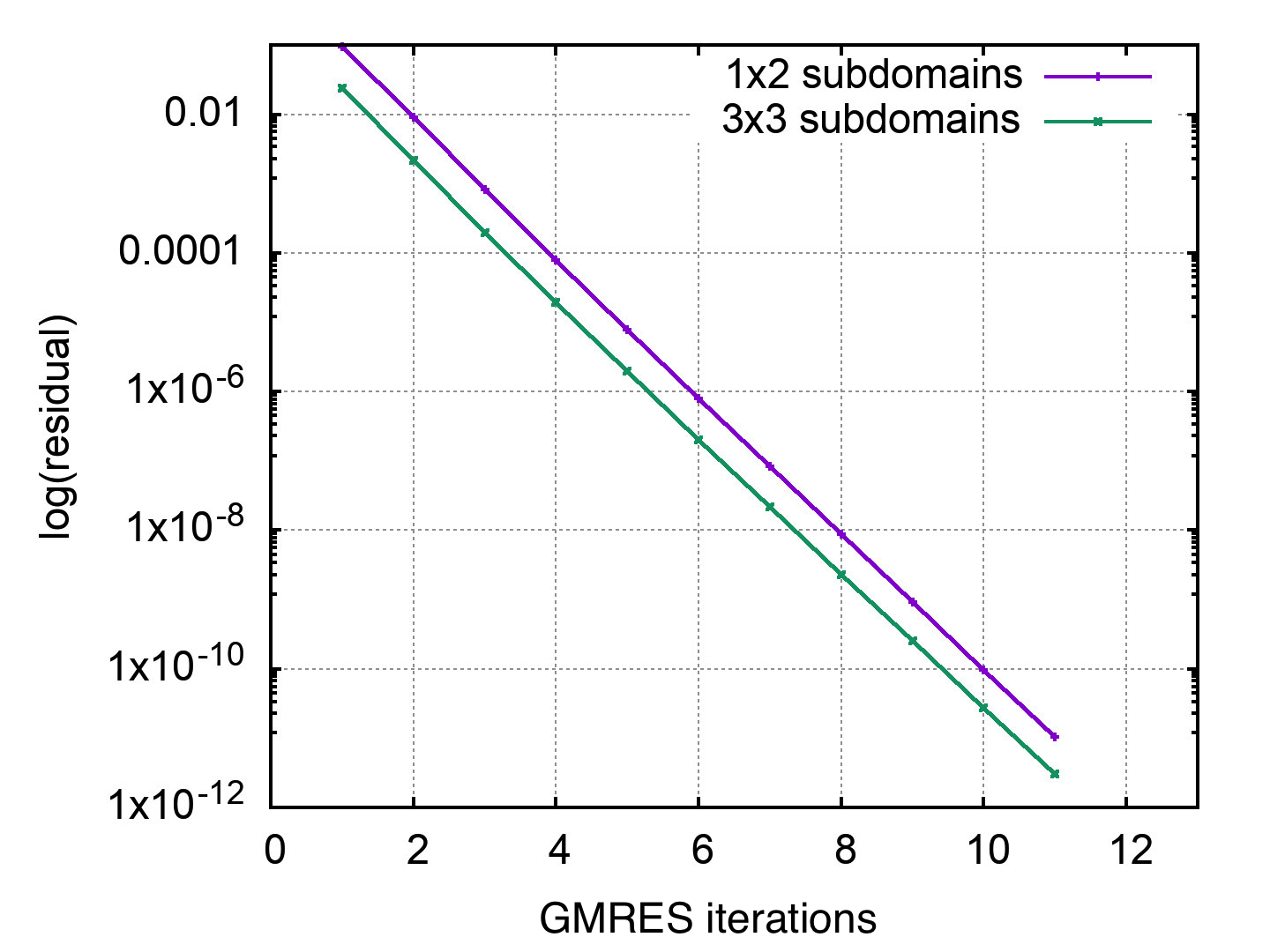}\\
		\end{minipage}
		\vspace{-0.6cm}
	\caption{The residual at each iteration for the 2D three-layered media problem. \label{fig:3layers}}
\end{figure*}

\subsubsection{The BP-2004 model in $\R^2$}

The performance of  Algorithm \ref{alg:diag2D} as the preconditioner is further tested  with  the 2D BP-2004 benchmark model \cite{Billette2004}, which contains a salt body 
and sharp velocity contrasts and has been popularly used for benchmarking reverse time migration.
The left side of the model is used in the test, which 
is $[0,24]\times[-12,0]$ measured in kilometers based on a geological cross section through the Western Gulf of Mexico, 
and the velocity varies from 1000 m/s to 5000 m/s, as shown in Figure \ref{fig:seg04_vel}.
  For a $\Nbx \times \Nby$ domain partition,
 a total number of $N_{\text{RHS}} = 2 (\Nbx + \Nby -1)$ shots are tested as sources, 
 and each of the shots is located at $(\hat{x}, -\frac{1}{4} \Delta \eta)$, 
 where $\hat{x}$ is a random position in the x-direction range of the domain without PML.
The size of the subdomain problems is fixed to be $200 \times 200$, while the number of subdomains ($\Nbx \times \Nby$) and frequency simultaneously increase. The PML layer is of 30 grid points.  The pipeline technique is implemented and used for handling this multiple RHSs problem as discussed in Remark 2.
An approximate solution to the problem of one of the random shots with the angular frequency $\omega/2\pi = 8.56$ is presented in Figure \ref{fig:seg04_sol}.
The results on the numbers of GMRES iterations  and the running times are reported in Table \ref{tab:iter_seg04}.
It is easy to see that
the number of GMRES iterations again grows roughly  proportional to  $\log(\omega)$ and  so does the average solving time $\Tave := \frac{T_{\text{it}}}{N_{\text{RHS}}}$, which demonstrates excellent efficiency and parallel scalability of  the proposed diagonal sweeping DDM with the pipeline processing.

\begin{figure}[!ht]
	\centerline{
		\includegraphics[width=.82\textwidth]{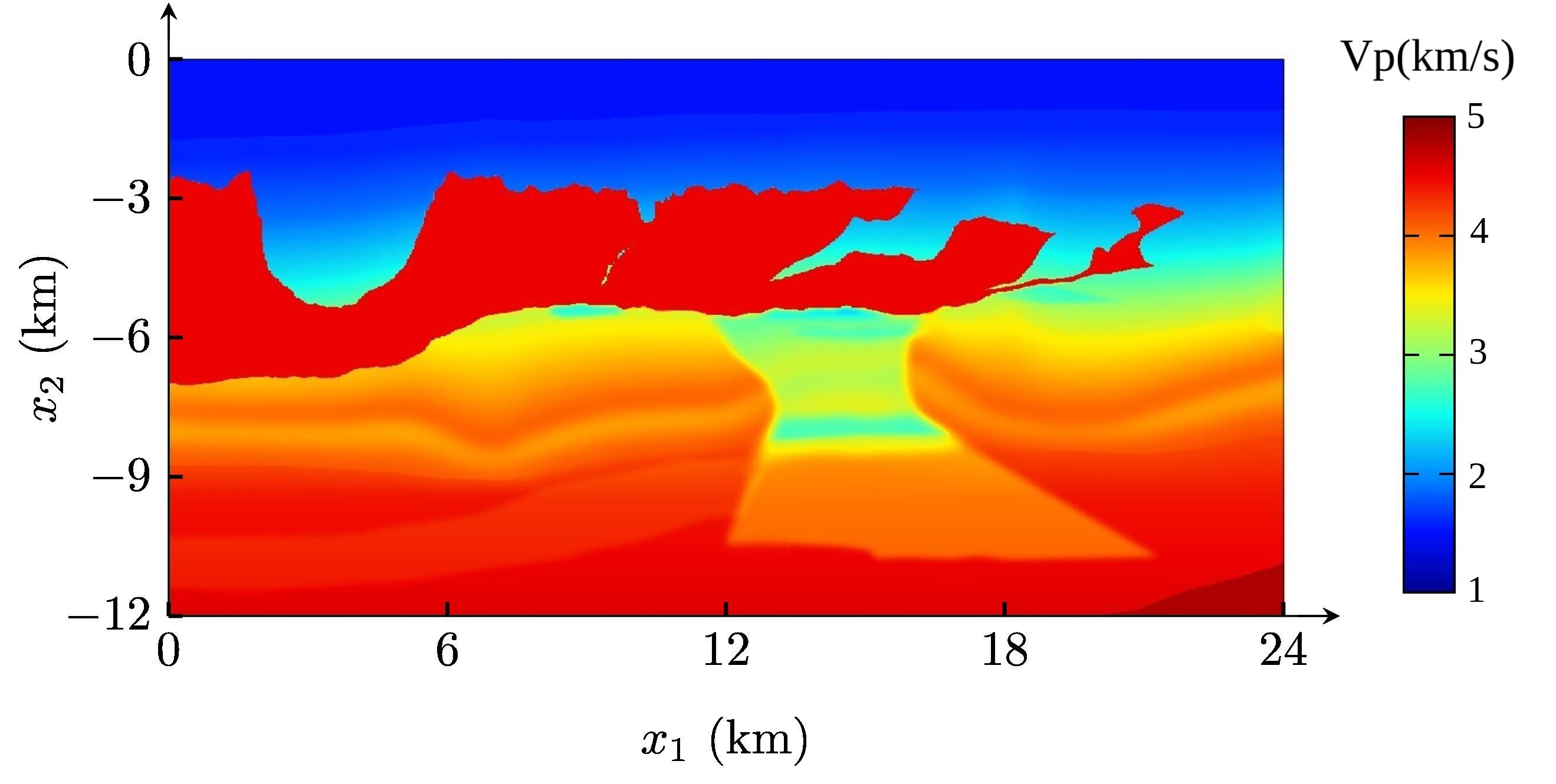} 
	}
		\vspace{-0.4cm}
	\caption{The velocity profile of the BP-2004 model.
	}
	\label{fig:seg04_vel}
\end{figure}

\begin{figure}[!ht]
	\centerline{
		\includegraphics[width=.75\textwidth]{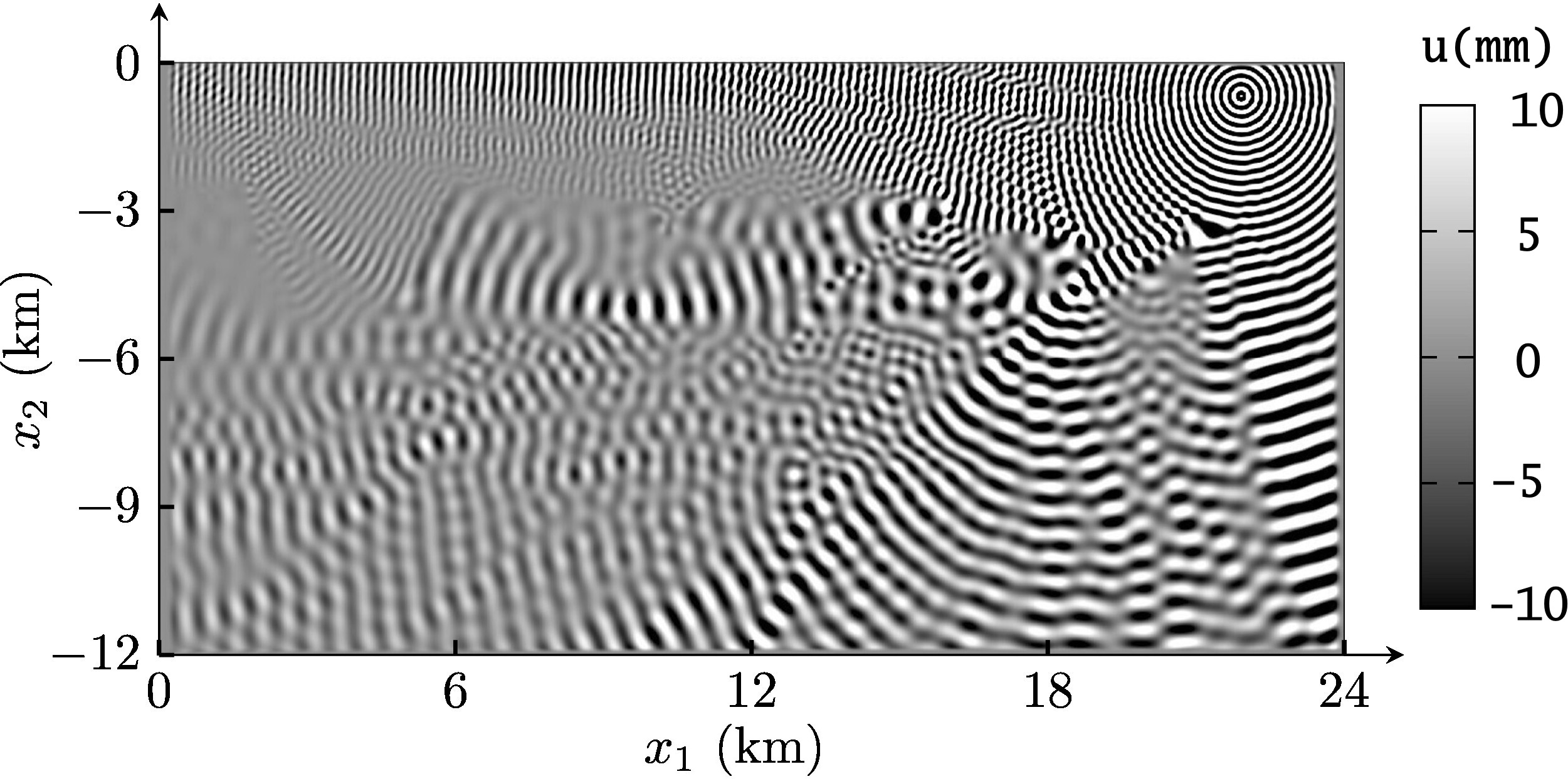} 
	}
	\vspace{-0.1cm}
	\caption{The real part of the approximate solution to the problem of the shot located at (21.931 km, -0.6944 km) with angular frequency $\omega/2\pi = 8.56$ on the mesh of size $1600^2$  in the BP-2004 model.}
	\label{fig:seg04_sol}
\end{figure}

\begin{table}[!ht]
	\centering
	\begin{tabular}{|r|c|r|r|r|r|r|r|r|}
		\hline
		Mesh      & \mR{$\Nbx \times \Nby$} & Freq.                & \mR{$N_{\text{RHS}}$} & GMRES                  & \mR{ $T_{\text{it}}$} & \mR{$\Tave $} \\
		\mC{Size} &                         & \mC{$\omega / 2\pi$} &                       & \mC{$n_{\text{iter}}$} &                       &               \\
                \hline
                \hline  
                400$^2$  & 2 $\times$ 2   & 2.37  & 6   & 6  & 71.4 & 11.9\\
		800$^2$  & 4 $\times$ 4   & 4.43  & 14  & 7  & 221  & 15.8 \\
		1600$^2$ & 8 $\times$ 8   & 8.56  & 30  & 8  & 478  & 15.9 \\
                3200$^2$ & 16 $\times$ 16 & 16.82 & 62  & 10 & 1276 & 20.6 \\
                6400$^2$ & 32 $\times$ 32 & 33.33 & 126 & 11 & 2832 & 22.5 \\
		\hline
	\end{tabular}
	\caption{The performance of Algorithm \ref{alg:diag2D} as the preconditioner  for the BP-2004 model 
		with the subdomain problem size being fixed.} \label{tab:iter_seg04}
\end{table}

\subsubsection{3D layered media problem} 

Algorithm  \ref{alg:diag3D} as the  preconditioner is tested for a 3D  five-layered  media problem on the cuboidal domain $[0,1]^3$ with different frequencies, see Figure \ref{fig:layer-3d_vel}-(left).
  For a $\Nbx \times \Nby \times \Nbz$ domain partition,
  a total number of $N_{\text{RHS}} = 2 (\Nbx + \Nby + \Nbz -2)$ shots are tested as sources,
  and each of the shot is located at $(\hat{x}, \hat{y}, 0.85)$, 
  where $(\hat{x}, \hat{y})$ is a random position within the range of  $(0.15, 0.85)\times(0.15, 0.85)$. The pipeline technique is again  used for handling this multiple RHSs problem.
The size of the subdomain problems  without PML layer is fixed to be $30^3$,
and the mesh density is kept to be $8$,
while the number of subdomains ($\Nbx\times\Nby\times\Nbz$) and the frequency simultaneously  increase. The PML layer is of 12 grid points, which is approximately 1.5 wave length.
 An approximate solution to the problem of one of the random shots with the angular frequency $\omega/2\pi = 27.40$ is presented in Figure  \ref{fig:layer-3d_vel}-(right).
The results on the numbers of GMRES iterations  and the running times are reported in Table \ref{tab:iter_R3}.
As we can see,  $n_{\text{iter}}$ grows as the number of the subdomains grows, and roughly, $n_{\text{iter}}$ is again proportional to  $\log(\omega)$ or $\log(N)$, and so does the average solving time $\Tave$, which again show that the proposed diagonal sweeping DDM 
is very efficient and scalable when combined with the pipeline processing.

\begin{figure}[!ht]
	\centerline{
          \includegraphics[width=.45\textwidth]{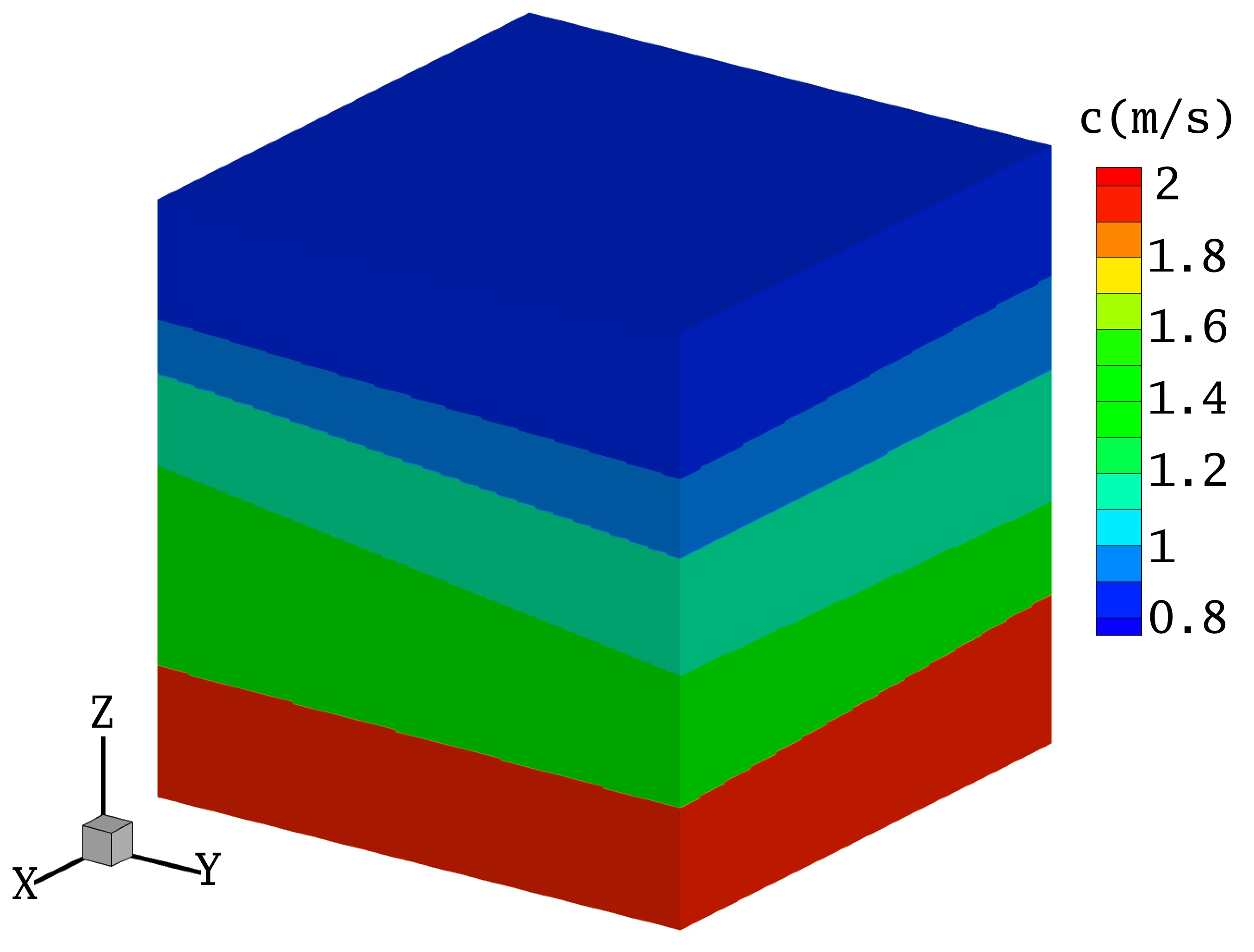}
          \includegraphics[width=.45\textwidth]{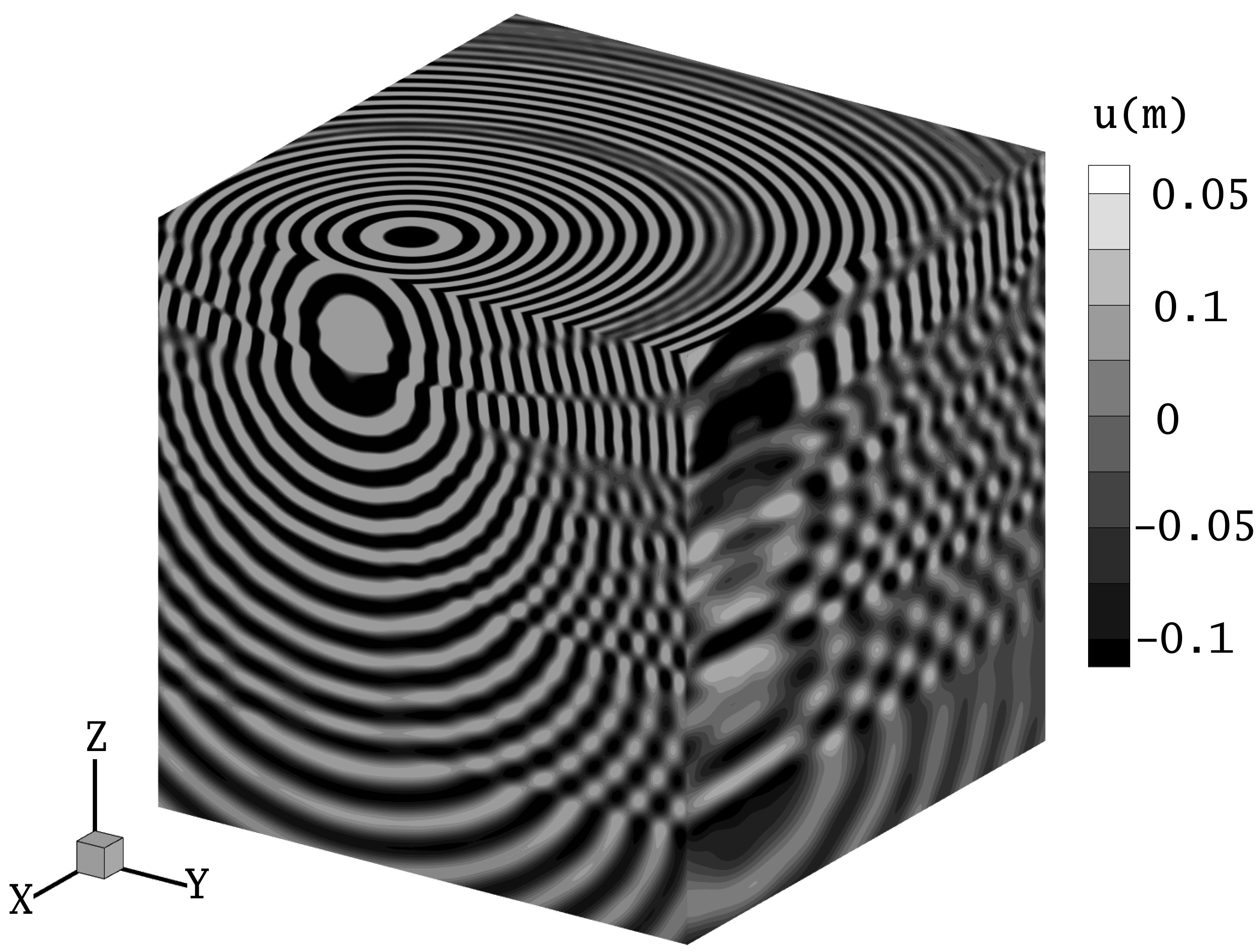} 
	}
	\caption{The velocity profile of the 3D layered media problem (left) and the real part of the approximate solution (right) to the problem of the shot located at (0.811, 0.383, 0.85) with angular frequency $\omega/2\pi = 27.40$ on the mesh of size $240^3$.
	}
	\label{fig:layer-3d_vel}
\end{figure}

\begin{table}[!ht]
	\centering
	\begin{tabular}{|r|c|r|r|r|r|r|r|}
          \hline
          Mesh      & \mR{$\Nbx \times \Nby$} & Freq.                & \mR{$N_{\text{RHS}}$} & GMRES                  & \mR{ $T_{\text{it}}$} & \mR{$\Tave $} \\
          \mC{Size} &                         & \mC{$\omega / 2\pi$} &                       & \mC{$n_{\text{iter}}$} &                       &               \\
          \hline  
          \hline  
          60$^3$  & $2\times 2\times 2$    & 8.65  & 8  & 4 & 290  & 36.3 \\ 
          120$^3$ & $4\times 4\times 4$    & 14.90 & 20 & 5 & 862  & 43.1 \\   
          180$^3$ & $6\times 6\times 6$    & 21.15 & 32 & 5 & 1530 & 47.8 \\  
          240$^3$ & $8\times 8\times 8$    & 27.40 & 44 & 6 & 2469 & 56.1 \\  
          300$^3$ & $10\times 10\times 10$ & 33.65 & 56 & 6 & 3232 & 57.7 \\      
          \hline
	\end{tabular}
	\caption{The performance of Algorithm \ref{alg:diag3D} as the preconditioner   for  the 3D layered media problem 
		with the subdomain problem size being fixed.} \label{tab:iter_R3}
\end{table}

\section{Conclusions}

In this paper, we have developed a diagonal sweeping domain decomposition method with source transfer for solving the high-frequency Helmholtz equation in $\R^n$. Through careful
analysis and extensive numerical experiments, we demonstrate the effectiveness and efficiency of the proposed method as a direct solver or a preconditioner for Krylov subspace methods. 
Comparing to the L-sweeps method \cite{Zepeda2019} with trace transfer, the proposed method with source transfer reduces from all directional sweeps of total $3^n-1$ to only diagonal sweeps of total $2^n$. Furthermore, the proposed method can handle the reflections in the medium in a more proper way. Due to the close relation between source transfer and trace transfer, the proposed method could be naturally extended to the polarized  trace approach with some modifications, and the differences of the resulted  DDMs caused by different  transfer methods and their performance comparisons are currently under our study. At the same time,
the application of the proposed diagonal sweeping DDM to 3D seismic imaging is  another main focus of our future research, the parallel frequency domain solver based on the proposed method will be optimized in several ways  including pipeline setup,  domain decomposition strategy and sparse direct solver, to challenge  the popularly used  time domain solvers in term of  computational cost. In addition, the extension of the proposed method  to the frequency domain wave equations, e.g. electromagnetic and elastic equations, is also worthy of further investigation. 

\section*{Acknowledgements}
W. Leng's research  is partially supported by
National Natural Science Foundation of China under grant number 11501553 and National Center for
Mathematics and Interdisciplinary Sciences of Chinese Academy of Sciences. L. Ju's research is partially supported by US National Science Foundation under grant number DMS-1818438.


\end{document}
